\newtheorem{definition}{Definition}[section]
\newtheorem{proposition}{Proposition}[section]
\newtheorem*{problem}{Problem}
\newtheorem{lemma}{Lemma}[section]
\newtheorem{example}{Example}[section]
\newtheorem{theorem}{Theorem}[section]
\newtheorem{corollary}{Corollary}[section]
\newenvironment{proofth3.1}{{\noindent\bf Proof of Theorem \ref{theorem.2}}\quad}{\hfill $\square$\par}
\newenvironment{proofth4.1}{{\noindent\bf Proof of Theorem \ref{theorem.3}}\quad}{\hfill $\square$\par}
\newenvironment{proofth4.2}{{\noindent\bf Proof of Theorem \ref{theorem.verification}}\quad}{\hfill $\square$\par}
\newenvironment{proofth4.3}{{\noindent\bf Proof of Theorem \ref{theorem.4}}\quad}{\hfill $\square$\par}
\newenvironment{mmvSN.proof.lemma.solution.suppose}{{\noindent\bf Proof of Proposition \ref{mmvSN.lemma.solution.suppose}}\quad}{\hfill $\square$\par}
\newenvironment{mmvSN.proof.lemma.relationship_of_XY}{{\noindent\bf Proof of Lemma \ref{mmvSN.lemma.relationship_of_XY}}\quad}{\hfill $\square$\par}
\newenvironment{mmvSN.proof.lemma.coefficients}{{\noindent\bf Proof of Lemma \ref{mmvSN.lemma.coefficients}}\quad}{\hfill $\square$\par}
\newenvironment{mmvSN.proof.theorem.solution.explicit}{{\noindent\bf Proof of Theorem \ref{mmvSN.theorem.solution.explicit}}\quad}{\hfill $\square$\par}
\newenvironment{mmfSN.proof.theorem.efficient_frontier}{{\noindent\bf Proof of Theorem \ref{mmfSN.theorem.efficient_frontier}}\quad}{\hfill $\square$\par}
\begin{document}
\title{\bf Optimal Monotone Mean-Variance Problem in a Catastrophe Insurance Model}


\author{
	\renewcommand{\thefootnote}{\arabic{footnote}}
	Bohan Li\footnotemark[1]
	\and
	\renewcommand{\thefootnote}{\arabic{footnote}}
	Junyi Guo\footnotemark[2]
	\and
	\renewcommand{\thefootnote}{\arabic{footnote}}
	Xiaoqing Liang\footnotemark[3]
}
\date{}

\renewcommand{\thefootnote}{\fnsymbol{footnote}}
\footnotetext{Bohan Li}
\footnotetext{bhli@suda.edu.cn}
\footnotetext{}
\footnotetext{Junyi Guo: corresponding author}
\footnotetext{jyguo@nankai.edu.cn}
\footnotetext{}
\footnotetext{Xiaoqing Liang}
\footnotetext{liangxiaoqing115@hotmail.com}
\footnotetext{}

\renewcommand{\thefootnote}{\arabic{footnote}}
\footnotetext[1]{Center for Financial Engineering, Soochow University, Suzhou, Jiangsu 215006, P.R.China}
\footnotetext[2]{School of Mathematical Sciences, Nankai University, Tianjin 300071, P.R.China}
\footnotetext[3]{School of Sciences, Hebei University of Technology, Tianjin 300401, P.R.China}

\maketitle

\begin{abstract}
\noindent This paper explores an optimal investment and reinsurance problem involving both ordinary and catastrophe insurance businesses. The catastrophic events are modeled as following a compound Poisson process, impacting the ordinary insurance business. The claim intensity for the ordinary insurance business is described using a Cox process with a shot-noise intensity, the jump of which is proportional to the size of the catastrophe event. This intensity increases when a catastrophe occurs and then decays over time. The insurer's objective is to maximize their terminal wealth under the Monotone Mean-Variance (MMV) criterion. In contrast to the classical Mean-Variance (MV) criterion, the MMV criterion is monotonic across its entire domain, aligning better with fundamental economic principles. We first formulate the original MMV optimization problem as an auxiliary zero-sum game. Through solving the Hamilton-Jacobi-Bellman-Isaacs (HJBI) equation, explicit forms of the value function and optimal strategies are obtained. Additionally, we provides the efficient frontier within the MMV criterion. Several numerical examples are presented to demonstrate the practical implications of the results.
\end{abstract}
\vfill

\noindent
{\bf Mathematics Subject Classification (2020)} 49L20 · 91G80


\noindent
{\bf Keywords.} \rm Optimal reinsurance $\cdot$ Monotone mean-variance criterion $\cdot$ Zero-sum game $\cdot$ Shot noise process $\cdot$ Catastrophe insurance


\noindent


\section{Introduction}

\par Large-scale disasters, especially natural catastrophes, have the potential to result in numerous injuries, fatalities, and substantial economic losses. For insurance companies, the occurrence of such catastrophic events significantly amplifies their exposure to claims within a short time. In recent times, some insurance firms have introduced specialized catastrophe insurance tailored to specific natural disasters. Therefore, the management of catastrophic risk has assumed an increasingly pivotal role within the insurance industry. In practice, insurance companies often establish a bankruptcy remote entity known as a Special Purpose Vehicle (SPV). They purchase reinsurance from the SPV through continuous premium payments. When a catastrophe occurs, the SPV partially compensates the insurance company for its losses. Simultaneously, the SPV packages and securitizes this catastrophic risk as catastrophe bonds, selling them to investors. These catastrophe bonds, whose risk depend on disaster events and often displays low correlation with financial market, become attractive risk diversification assets among investors.

\par Typically, optimal reinsurance problems model the insurer's surplus process using the Cramér-Lundberg risk model. However, catastrophe insurance claims differ significantly from those in traditional insurance models. Catastrophes lead to an immediate surge in claims, and even after the event, the repercussions persist and gradually recede over time. Furthermore, the frequency of ordinary claims rises in tandem with the severity of the catastrophe(see \cite{born2006catastrophic} for more details). To capture this phenomenon, we employ a Cox process, to model the occurrence process of ordinary claims whose intensity is a stochastic process following a shot-noise process, which represents the occurrence and receding of the catastrophe. While this model has been applied in asset pricing (see \cite{dassios2003pricing}, \cite{jang2003shot}, \cite{dassios2003pricing}, \cite{jaimungal2006catastrophe}, \cite{Egami2008Indifference}, \cite{Unger2010Pricing}, \cite{Zonggang2017Pricing} and \cite{Eichler2017Utility}), it has received limited attention in the context of catastrophe insurance optimization. \cite{Delong2007Mean} first considered the optimal investment and reinsurance problem for the shot-noise process. They used the diffusion approximation of the shot-noise process introduced by \cite{dassios2005kalman-bucy} to optimize the terminal wealth under the classical MV criterion. \cite{Bi2010Hedging} minimized the risk of unit-linked life insurance by hedging in a financial market driven by a discontinuous shot-noise process. \cite{brachetta2019optimal} investigated the optimal investment and reinsurance problem for a Cox process. In their model, the intensity of the claim arrival process is modelled by a diffusion process. The objective was to maximize the expected exponential utility of its terminal wealth. The value function and optimal strategy were provided via the solutions to backward partial differential equations. \cite{Cao2020Optimal} considered the self-exciting and externally-exciting contagion claim Model under the time-consistent MV criterion with a model first introduced by \cite{Dassios2011Adynamic}. Therefore, existing research primarily focuses on continuous-time models or diffusion approximation models. The optimization problem of catastrophe models with pure jump settings in both the claim process and the intensity process, especially under the emerging MMV criterion, remains an under-explored topic of study.

\par In this paper, we address this issue by considering an insurer managing both ordinary and catastrophe insurance claims under the MMV criterion within a Black-Scholes financial market framework. The insurer faces potential claims, which fall into two categories: ordinary claims and catastrophe claims. The modeling of ordinary aggregate claims involves the use of a Cox process, wherein the intensity process takes the form of a shot noise process. Simultaneously, catastrophe claims are modeled using a compound Poisson process, sharing the same Poisson counting process as the intensity of ordinary claims. This implies that catastrophe claims and the impact of disasters occur concurrently. We make an assumption that the sizes of catastrophe claims are proportional to the effect of the catastrophe, represented as the jump of the shot noise process. Specifically, ordinary claims are expressed as:
\begin{align*}
	C_1(t) = \sum_{i=1}^{N_1(t)}U_i
\end{align*}
where $N_1(t)$ is a Cox process with intensity process $\lambda(t)$. $U_i$ denotes the size of the $i$-th claim, with the claim sizes $\{U_i\}_{i\geq1}$ being independent and identical distributed. The intensity process $\lambda(t)$ is defined as 
\begin{align*}
	\lambda(t) := \lambda_0 e^{-\delta t} + \sum_{i=1}^{N_2(t)} V_i e^{-\delta (t-\tau_i)}.
\end{align*}
Here, $N_2(t)$ is a Poisson process with a constant intensity $\rho$, and $\{V_i\}_{i\geq1}$ are also identically and independently distributed random variables. The shot-noise process $\lambda(t)$ is used to describe the occurrence and receding pattern of the catastrophes. $V_i$ represents the impact of the $i$-th catastrophe, and it decays exponentially with the factor $\delta$. In practice, the intensity $\rho$ tends to be relatively low due to the infrequent occurrence of catastrophes. We introduce a constant scale factor $k$, thus $kV_i$ denotes the size of the $i$-th catastrophe claims, resulting in the aggregate catastrophe claims process:
\begin{align*}
	C_2(t) = \sum_{i=1}^{N_2(t)}kV_i.
\end{align*}
Consequently, the total risk assumed by the insurer is given by:
\begin{align*}
	C_1(t) + C_2(t).
\end{align*}

\par Despite the successful application of the MV criterion in finance and economics, it has a notable drawback of lacking monotonicity. In other words, situations may arise where $X < Y$ but $\mathbb{E}(X)-\frac{\theta}{2}\mathbb{V}ar(X) > \mathbb{E}(Y)-\frac{\theta}{2}\mathbb{V}ar(Y)$, where $\theta$ represents the investor's risk aversion parameter. To address this limitation, \cite{maccheroni2009portfolio} introduced the MMV criterion, which serves as the nearest monotonic counterpart to the MV criterion. Interestingly, the MMV criterion aligns with the classical MV criterion within its domain of monotonicity. While there are relatively few studies in the literature related to the MMV optimization criterion, some researchers have ventured into this area. For example, \cite{trybula2019continuous-time} explored this criterion in a continuous-time framework, assuming that coefficients of stock prices are random as functions of a stochastic process. They focused on the discounted terminal wealth process under the MMV criterion, obtaining optimal portfolio strategies and value functions under specified coefficient conditions. In our earlier research works, \cite{li2021optimal} and \cite{li2022Monotone}, we delved into the MMV optimization criterion within the context of investment and reinsurance problems. Specifically, we explored this criterion in two distinct scenarios: the diffusion approximation model and the Cramér-Lundberg risk model. It is noteworthy that we made an observation regarding the optimal strategies in these two frameworks that these optimal strategies align with those derived for the classical MV problem. In a significant contribution, \cite{strub2020note} and \cite{Ales2020Semimartingale}\footnote{\cite{Ales2020Semimartingale} shows that the
optimal wealth processes of MMV problems are identical to that of MV problems under a certain condition which is applicable for continuous assets processes (see \cite{delbaen1996variance}).} extended this understanding by demonstrating that for a broad class of portfolio choice problems, when risky assets are {\em continuous} semi-martingales, the optimal portfolio and value function under the classical MV criterion and the MMV criterion are equivalent, which is not the case in our present paper, since the catastrophe claim process is not continuous. To the best of our knowledge, no research has explored the optimization strategies under the MV or MMV criteria in the context of discontinuous catastrophe models. 

\par In light of these gaps, our paper focuses on an optimal reinsurance problem encompassing both ordinary and catastrophe insurance claims, operating under the MMV criterion. This model allows the insurer to invest within a Black-Scholes financial market. To mitigate claim-related risks, the insurer has the option to purchase reinsurance for both ordinary and catastrophe insurance from SPV. For analytical simplicity, we confine the reinsurance format to proportional reinsurance. The optimal MMV problem takes the form of a max-min problem. Initially, the objective is to minimize it by selecting an alternative probability measure that is absolutely continuous (though not necessarily equivalent) with respect to the reference probability measure $P$. Subsequently, the objective is maximized by determining the optimal insurer strategies. To facilitate this complex analysis, we adopt a procedure similar to \cite{li2021optimal} in which the alternative probability measure is replaced by its conditional expected Radon-Nikodym derivative with respect to the reference probability measure. Despite the inherent challenge of absolute continuity (not equivalence), this approach enables us to represent the conditional expected Radon-Nikodym derivative as the solution to a stochastic differential equation (see \cite{li2022Monotone}). This reformulation converts the original optimal MMV problem into a two-player non-zero sum game, incorporating the conditional expected Radon-Nikodym derivative into the state processes. We leverage dynamic programming techniques to address this auxiliary problem, deriving the explicit solution to the Hamilton-Jacobi-Bellman (HJB) equation after rigorous calculations. Additionally, we provide the efficient frontier for the MMV problem involving catastrophic insurance.

\par The structure of the paper is organized as follows: In Section 2, we formulate the insurance model and introduce the MMV criterion. Subsequently, we transform the MMV maximization problem into an auxiliary zero-sum game problem. Employing dynamic programming, we derive the HJBI equation satisfied by the value function. In Section 3, we further solve the HJBI equation, demonstrating that the candidate value function and strategies indeed represent the optimal value function and corresponding strategies. Toward the end of this section, we provide the efficient frontier under the MMV criterion. In Section 4, we investigate the diffusion approximation model within the MMV criterion framework. Section 5 offers numerical examples and sensitivity analyses to illustrate our findings. Finally, Section 6 draws conclusions from our research.

\section{Model Formulation}

\subsection{Insurance Model}

\par Let $P$ be the real-world probability measure and $R_0(t)$ be the surplus of an insurer, which operates both the ordinary insurance business and the catastrophe insurance business,
\begin{align*}
dR_0(t) = c(t) dt  -  dC_1(t) - dC_2(t),
\end{align*}
where $c(t)$ is the premium rate at time $t$. The aggregate claims of the insurer are composed of two parts, $C_1(t)$, denoting the ordinary aggregate claims, and $C_2(t)$, which represents the aggregate catastrophe claims. In the event of a catastrophe, there is an immediate surge in claims, which is characterized by the catastrophe claim process denoted as $C_2(t)$.

\par In traditional insurance literature, the aggregate claims process $C_1(t)$ is typically assumed to follow a compound Poisson process. However, considering the impact of catastrophes, the frequency of ordinary claims $C_1(t)$ should be contingent upon the occurrence of a catastrophe. The fixed intensity of the compound Poisson process is inadequate for capturing the dynamic nature of ordinary claim frequency. To address this limitation, we replace the constant intensity of $C_1(t)$ with a stochastic process denoted as $\lambda(t)$. Let $N_1(t)$ represent the counting process for $C_1(t)$; in this context, $N_1(t)$ can be described as a doubly stochastic Poisson process, also known as a Cox process\footnote{For more details, we refer the readers to the works of \cite{cox1955some}, \cite{bartlett1963the}, \cite{serfozo1972conditional}, \cite{bremaud1983point}, \cite{grandell2006doubly}, and \cite{grandell2012aspects}.}. .
\par Denote $\lambda(t)$ the shot noise process as follows\footnote{See \cite{cox1980point}, \cite{cox1986the}, \cite{kluppelberg1995explosive}, and \cite{dassios2003pricing}.}:
\begin{align*}
\lambda(t) = \lambda_0 e^{-\delta t} + \sum_{i=1}^{N_2(t)} V_i e^{-\delta (t-\tau_i)}.
\end{align*}
We employ $\lambda(t)$ to characterize the impact of catastrophes on the intensity of $N_1(t)$, where $\lambda_0$ corresponds to the initial value, $V_i$ stands for the impact of the $i$-th catastrophe, $N_2(t)$ represents the total number of catastrophes occurring before time $t$, $\tau_i$ denotes the time at which the $i$-th catastrophe transpires, and $\delta$ signifies the exponential decay factor for $\lambda$.

\par Let
\begin{align*}
C_1(t) = \sum_{i=1}^{N_1(t)}U_i, \qquad C_2(t) = \sum_{i=1}^{N_2(t)}kV_i.
\end{align*}
in which each $U_i$ represents the magnitude of the $i$-th ordinary claim. Additionally, the premium rate $c(t)$ is considered a random process, depends on the observations of $\lambda(s)$ within the interval $[0,t)$. This is because the insurer has the capability to monitor the historical data of past catastrophes and evaluate their lasting effects to determine the pricing of catastrophe insurance products. At this point, $c(t)$ is defined by
\begin{align*}
c(t) :=& (1+\kappa)\frac{d\mathbb{E}^P[C_1(t)|\lambda(s),0 \leq s \leq t]}{dt} + (1+\iota)\frac{d\mathbb{E}^P[C_2(t)]}{dt}  \\
=& (1+\kappa) \lambda(t) \mathbb{E}^P U_i + (1+\iota) \rho k \mathbb{E}^P V_i.
\end{align*}
where $\rho$ is the intensity of $N_2(t)$, $\kappa$ and $\iota$ denote the safety loadings of the insurer for ordinary claims and catastrophe claims, respectively. The integral forms of $\lambda(t)$ and $R_0(t)$ are\footnote{We use $\mathbb{R}_{>0}$ to denote the space of positive real numbers, and $\mathbb{R}_{\geq 0}$ to denote the space of non-negative real numbers.}
\begin{align*}
d\lambda(t) = -\delta \lambda(t)dt + \int_{\mathbb{R}_{>0}} z N_2(dt,dz), \quad \lambda(0) = \lambda_0,
\end{align*}
and
\begin{align*}
dR_0(t) = (1+\kappa) \lambda(t) \mathbb{E}^P U_i dt + (1+\iota) \rho k \mathbb{E}^P V_i dt -  \int_{\mathbb{R}_{>0}} z N_1(dt,dz) - \int_{\mathbb{R}_{>0}} k z N_2(dt,dz).
\end{align*}

\par The insurer has the option to acquire proportional reinsurance to mitigate both ordinary and catastrophic risks. We represent the safety loadings for reinsurance companies for ordinary and catastrophe claims as $\kappa_r$ and $\iota_r$, respectively. In this scenario, the managed surplus process evolves as:
\begin{align*}
dR(t) =&  [(1+\kappa_r)u(t) - (\kappa_r - \kappa)] \lambda(t) \mathbb{E}^P U_i dt -  \int_{\mathbb{R}_{>0}} z u(t) N_1(dt,dz) \\
&+ [(1+\iota_r)v(t) - (\iota_r - \iota)] \rho k \mathbb{E}^P V_i dt -  k\int_{\mathbb{R}_{>0}} z v(t) N_2(dt,dz),
\end{align*}
where the two control variables, $u(t)$ and $v(t)$, are the retention levels for ordinary claims and catastrophe claims, respectively.

\par We also allow the insurer to invest its wealth to a risky asset, the price of which at time $t$ is governed by a geometric Brownian motion given by
\begin{align*}
dS(t) =  \mu_0 S(t)dt +  \sigma_0 S(t)dW_0(t),
\end{align*}
where $\mu_0$ and $\sigma_0$ are constants, and $W_0(t)$ represents a standard Brownian motion. The insurer is also allowed to allocate its wealth into a risk-free asset, the price of which at time $t$ follows the process:
\begin{align*}
dB(t) =  r B(t)dt,
\end{align*}
where $r$ is a fixed interest rate. Let $\pi(t)$ denote the amount that the insurer invests into the risky asset, then the insurer's surplus process follows
\begin{align}
\label{mmvSN.welath.0}
dX(t) = \pi(t)\frac{dS(t)}{S(t)} + (X(t)-\pi(t))\frac{dB(t)}{B(t)} + d R(t).
\end{align}

\par We assume that the processes and random variables $N_1(t)$, $\{U_i\}_{i\geq1}$, $N_2(t)$, $\{V_i\}_{i\geq1}$ and $W_0(t)$ are mutually independent under the probability $P$. Let $\mathcal{F}_t$ be the completion of the $\sigma$-field generated by $N_1(dt,dz)$, $N_2(dt,dz)$, $W_0(t)$, $\{U_i\}$ and $\{V_i\}$ under $P$ and $\mathbb{F}:=\{ \mathcal{F}_t | t \in [0,T] \}$. By Proposition 3 of \cite{kabanov1979absolute}, the compensated random measures in probability space $(\Omega, \mathbb{F}, P)$ are given by
\begin{align*}
\widetilde{N}_1(ds,dz) = N_1(ds,dz) - \lambda(s)F_1(dz)ds,
\end{align*}
and
\begin{align*}
\widetilde{N}_2(ds,dz) = N_2(ds,dz) - \rho F_2(dz)ds,
\end{align*}
where $F_1$ and $F_2$ are distributions of $U_i$ and $V_i$, respectively. For notational simplicity, we also denote
\begin{align}
\nonumber &\mu_1= \int_{\mathbb{R}_{>0}} z F_1(dz), \quad &\sigma_1^2 = \int_{\mathbb{R}_{>0}} z^2 F_1(dz), \\
\label{mmvSN.moment} &\mu_2= \int_{\mathbb{R}_{>0}} z F_2(dz), \quad &\sigma_2^2 = \int_{\mathbb{R}_{>0}} z^2 F_2(dz).
\end{align}
Thus, the differential equations for $\lambda(t)$ and $R(t)$ can be written as
\begin{align*}
d\lambda(t) = (-\delta \lambda(t) + \rho \mu_2) dt + \int_{\mathbb{R}_{>0}} z \widetilde{N}_2(dt,dz),
\end{align*}
and
\begin{align}
\nonumber dR(t) =& \bigg(\kappa_r u(t) - (\kappa_r - \kappa)\bigg)\mu_1\lambda(t) dt -  \int_{\mathbb{R}_{>0}} z u(t) \widetilde{N}_1(dt,dz) \\
\label{mmvSN.surplus.0} &+ \bigg(\iota_r v(t) - (\iota_r - \iota)\bigg)k \mu_2\rho dt -  k\int_{\mathbb{R}_{>0}} z v(t) \widetilde{N}_2(dt,dz).
\end{align}
By substituting \eqref{mmvSN.surplus.0} into \eqref{mmvSN.welath.0}, the controlled surplus process of the insurer satisfies the following stochastic differential equation,
\begin{align*}
dX(t) =  \bigg( r X(t)+\pi(t) (\mu_0 - r)  + (\kappa_ru(t) - \kappa_r + \kappa)\mu_1\lambda(t)  + (\iota_r v(t) - \iota_r + \iota) k\mu_2\rho\bigg) dt \\
+ \pi(t)  \sigma_0  dW_0(t)  -  \int_{\mathbb{R}_{>0}} z u(t) \widetilde{N}_1(dt,dz) -  k \int_{\mathbb{R}_{>0}} z v(t) \widetilde{N}_2(dt,dz),
\end{align*}
where $(\pi(t), u(t), v(t))$ are the control variables.

\subsection{Monotone Mean-Variance Criterion}

\par The insurer aims to choose some admissible strategies $(\pi(t), u(t), v(t))$ to maximize the terminal surplus under MMV criterion introduced by \cite{maccheroni2009portfolio}, that is,
\begin{equation}
\label{obj.0} V_{\theta}(X(T)) = \min_{Q \in \Delta^2(P)} \left \{ \mathbb{E}^Q[X(T)]+\frac{1}{2 \theta} C(Q||P) \right \},
\end{equation}
where
\begin{equation*}
\Delta^2(P) = \left\{Q \ll P: Q( \Omega ) = 1, E\left[\left(\frac{dQ}{dP}\right)^2\right] < \infty \right\},
\end{equation*}
and $C(Q||P)$ is a penalty function satisfying
\begin{equation*}
C(Q||P) = \mathbb{E}^P \left[\left(\frac{dQ}{dP}\right)^2 \right] - 1,
\end{equation*}
and $\theta$ is an index to measure the risk aversion of the insurer. 
If the alternative probability measure $Q$ is constrained in a subset of $\Delta^2(P)$:
\begin{equation*}
\widetilde{\Delta}^2(P) = \bigg\{Q \sim P: Q( \Omega ) = 1, \mathbb{E}^P \bigg[\left(\frac{dQ}{dP}\right)^2\bigg] < \infty \bigg\},
\end{equation*}
by applying the exponential martingale representation, we know that $\frac{dQ}{dP}|_{\mathcal{F}_t}$ is the solution of the SDE:
\begin{equation}
\label{mmvSN.Q.representation}
Y(t) = 1  + \int_0^t Y(s) o(s)dW_0(s) + \int_0^t \int_{\mathbb{R}_{>0}} Y(s-) p(s,z) \widetilde{N}_1(ds,dz) + \int_0^t \int_{\mathbb{R}_{>0}} Y(s-) q(s,z) \widetilde{N}_2(ds,dz),
\end{equation}
where $(o(t), p(t,z), q(t,z))$ are some suitable process\footnote{$o$ is an $\mathbb{F}$-adapted process, and for any fixed $z \in \mathbb{R}_{>0}$, $p$ and $q$ are $\mathbb{F}$-predictable processes. All of them should satisfy some integrable conditions such as Novikov's condition.} depending on the selection of $Q$ (see \cite{mataramvura2008risk} and \cite{Elliott2009Portfolio} for more details). Specially, we can regard $o(t)$, $p(t,z)$ and $q(t,z)$ as control processes which have a one-to-one correspondence with $Q \in \widetilde{\Delta}^2(P)$. 

\par In this paper, we do not constrain the chosen of $Q$ in $\widetilde{\Delta}^2(P)$, hence $\frac{dQ}{dP}\big|_{\mathcal{F}_t}$ is not necessarily positive almost surely for $t\in[0,T]$, and is not necessarily a standard exponential martingale. Fortunately, by using the tools of discontinuous martingale analysis in \cite{kabanov1979absolute}, \cite{liptser2012theory} and \cite{hansen2006robust}, it can be proved that the exponential martingale representation \eqref{mmvSN.Q.representation} still holds and $\frac{dQ}{dP}|_{\mathcal{F}_t}$ is a solution of \eqref{mmvSN.Q.representation} (see \cite{li2022Monotone} for more details). In this case, $o(t)$, $p(t,z)$ and $q(t,z)$ can explode at the time that $Y$ hits zero, but for any
\begin{equation*}
\zeta_n := \inf\left\{ t \geq 0; Y(t) \leq \frac{1}{n}\right\},
\end{equation*}
$o(t)$, $p(t,z)$ and $q(t,z)$ should satisfy the following integrable condition for any integer $n \geq 1$:
\begin{align}
\nonumber E \bigg[ \int_0^{T \wedge \zeta_n} o(t)^2 dt + \int_0^{T \wedge \zeta_n} \int_{\mathbb{R}_{>0}} (1 - \sqrt{p(t,z)+1})^2 \lambda(t)F_1(dz) dt \\
\label{mmvSN.condition.integrable.property} + \int_0^{T \wedge \zeta_n} \int_{\mathbb{R}_{>0}} (1 - \sqrt{q(t,z)+1})^2 \rho F_2(dz) dt \bigg] < \infty.
\end{align}
We give the following example to illustrate this more clearly.

\begin{example}
	Let the intensity of $N_2$, $\rho$ = 1 and define
	\begin{align*}
	Y(T) = \frac{N_2(\tau)}{\tau}, \text{ for some } 0 < \tau < T,
	\end{align*}
	and an alternative measure
	\begin{align*}
	Q(A) := \int_A Y(T) dP, \text{ for all } A \in \mathcal{F}_T.
	\end{align*}
	Since $Y(T)$ is non-negative and square-integrable, and
	\begin{align*}
	Q(\Omega) = \mathbb{E} Y(T) =& \mathbb{E} \bigg[ \frac{N_2(\tau)}{\tau} \bigg] = 1,
	\end{align*}
	we have $Q \in \Delta^2(P)$.
	In this case,
	\begin{align*}
	Y(t) = \mathbb{E} \big[ Y(T) \big| \mathcal{F}_t \big] = 
	\begin{cases}
	&\frac{N_2(t) - t +\tau}{\tau}, \quad 0 \leq t < \tau, \\
	&\frac{N_2(\tau)}{\tau}, \quad \tau \leq t \leq T.
	\end{cases}
	\end{align*}
	Obviously, $Y(t)$ is not almost surely positive for $\tau \leq t \leq T$. Moreover, we have
	\begin{align*}
	Y(t) =  1 + \int_0^{t \vee \tau} \frac{1}{\tau} d\widetilde{N}_2(s) = 1 + \int_0^{t} \frac{1}{\tau} 1_{\{s \leq \tau\}}  d\widetilde{N}_2(s).
	\end{align*}
	Define
	\begin{align*}
	\eta(t) = \frac{1}{\tau} Y(t)^{-1}1_{\{t < \tau\}} =
	\begin{cases}
	&\frac{1}{N_2(t)-t+\tau}, \quad 0 \leq t < \tau, \\
	&0, \quad \tau \leq t \leq T,
	\end{cases}
	\end{align*}
	then
	\begin{align*}
	Y(t) =& 1 + \int_0^{t} \frac{1}{\tau} 1_{\{s \leq \tau\}}  d\widetilde{N_2}(s) = 1 + \int_0^t Y(s-) \eta(s-) d\widetilde{N_2}(s).
	\end{align*}
	We have $q(t) = \eta(t-)$.
	Note that on the set $\{\omega:N_2(\tau)=0\}$, we have
	\begin{align*}
		\int_0^{T} (1 - \sqrt{q(t)+1})^2 dt  =& \int_0^{\tau} \left(1 - \sqrt{\frac{1}{N_2(t-)-t+\tau}+1}\right)^2 dt \\
		=& \int_0^{\tau} \left(1 - \sqrt{\frac{1}{\tau-t}+1}\right)^2 dt = \infty.
	\end{align*}
	But for any $\zeta_n < \tau$, it satisfies the condition \eqref{mmvSN.condition.integrable.property} that
	\begin{align*}
		E \bigg[\int_0^{T \wedge \zeta_n} (1 - \sqrt{q(t)+1})^2 dt \bigg] = E \bigg[\int_0^{\tau \wedge \zeta_n} \left(1 - \sqrt{\frac{1}{N_2(t-)-t+\tau}+1}\right)^2 dt\bigg] < \infty,
	\end{align*}
\end{example}

\subsection{Hamilton-Jacobi-Bellman-Isaacs Equation}

Based on the above discussion, we now summarize the dynamics of $X(t)$, $Y(t)$ and $\lambda(t)$ below
\begin{subequations}
	\begin{empheq} [left=\empheqlbrace] {align}
	\nonumber dX(t) =&  \bigg( r X(t)+\pi(t)( \mu_0 - r)  + (\kappa_ru(t) - \kappa_r + \kappa)\mu_1\lambda(t)  + (\iota_r v(t) - \iota_r + \iota)k\mu_2\rho\bigg) dt \\
	\label{mmvSN.dynamic.x} &+ \pi(t)  \sigma_0  dW_0(t)  -  \int_{\mathbb{R}_{>0}} z u(t) \widetilde{N}_1(dt,dz) - k \int_{\mathbb{R}_{>0}} z v(t) \widetilde{N}_2(dt,dz), \\
	\label{mmvSN.dynamic.y} dY(t) =& Y(t) o(t) dW_0(t) + \int_{\mathbb{R}_{>0}} Y(t-) p(t,z) \widetilde{N}_1(dt,dz) + \int_{\mathbb{R}_{>0}} Y(t-) q(t,z) \widetilde{N}_2(dt,dz), \\
	\label{mmvSN.dynamic.lambda} d\lambda(t) =& (-\delta \lambda(t) + \rho \mu_2) dt + \int_{\mathbb{R}_{>0}} z \widetilde{N}_2(dt,dz).
	\end{empheq}
\end{subequations}
and the original maximization problem is transformed to an auxiliary two-player zero-sum game as follows,
\begin{problem} Let
	\begin{equation}
	\label{obj.3}
	J^{a,b}(s,x,y,\lambda) = \mathbb{E}_{s,x,y,\lambda}^P \bigg[X^{a}(T)Y^{b}(T)+\frac{1}{2\theta}(Y^{b}(T))^2 \bigg],
	\end{equation}
	where $\mathbb{E}_{s,x,y,\lambda}^P[\cdot]$ represents $\mathbb{E}^P[\cdot|X^a(s) = x,Y^{b}(s) = y,\lambda(s) = \lambda]$. Player one wants to maximize $J^{a,b}(s,x,y,\lambda)$ with strategy $a = (\pi,u,v)$ over $\mathcal{A}[s,T]$ defined below and player two wants to maximize $-J^{a,b}(s,x,y,\lambda)$ with strategy $b = (o,p,q)$ over $\mathcal{B}[s,T]$ defined below.
\end{problem}

Notice that the starting time of Problem \eqref{obj.0} is fixed at zero, and Problem \eqref{obj.3} extended Problem \eqref{obj.0} by letting the initial time become $s$ (see \cite{li2021optimal,li2022Monotone} for more details). If there exist the optimal strategies $a^*$ and $b^*$ for Problem \eqref{obj.3}, then
\begin{equation*}
V_{\theta}(X(T)) = J^{a,b^*}(0,x,1,\lambda).
\end{equation*}

\begin{definition} \label{mmvSN.definition.admissible}
	\par The strategy $\{a(t)\}_{0\leq t\leq T} = \{(\pi(t),u(t),v(t))\}_{0\leq t\leq T}$ employed by player one is admissible if the following conditions are met: $\pi: [s,T] \to \mathbb{R}$ is an $\mathbb{F}$-adapted process, and $u: [s,T] \to \mathbb{R}$ and $v: [s,T] \to \mathbb{R}$ are $\mathbb{F}$-predictable processes such that \eqref{mmvSN.dynamic.x} is well-defined and satisfies $\mathbb{E}^PX^u(t) < \infty$. We denote the set of all admissible strategies $a(t)$ as $\mathcal{A}[s,T]$.
	
	\par The strategy $\{b(t,z)\}_{0\leq t\leq T} = \{(o(t),p(t,z),q(t,z))\}_{0\leq t\leq T}$ used by player two is admissible if the following conditions are met: $o: [s,T] \to \mathbb{R}$ is an $\mathbb{F}$-adapted process, and for any fixed $z \in \mathbb{R}_{>0}$, $p(\cdot,z): [s,T] \to \mathbb{R}$ and $q(\cdot,z): [s,T] \to \mathbb{R}$ are $\mathbb{F}$-predictable processes. These processes should satisfy the condition:
	\begin{equation*}
		\Delta L(t) \equiv \int_{\mathbb{R}{>0}} p(t,z) N_1({t},dz) + \int{\mathbb{R}_{>0}} p(t,z) N_2({t},dz) \geq -1, \quad t \in [s,T],
	\end{equation*}
	Additionally, the stochastic differential equation \eqref{mmvSN.dynamic.y} should have a unique solution, which is a nonnegative $\mathbb{F}$-adapted square integrable $P$-martingale satisfying $\mathbb{E}^PY(t) = 1$ for $t \in [s,T]$. We use $\mathcal{B}[s,T]$ to represent the set of all admissible strategies $b(t,z)$.
\end{definition}

We present the following verification theorem, whose proof closely follows the one of Theorem 2.2.2 in \cite{yeung2006cooperative} and Theorem 3.2 in \cite{mataramvura2008risk}:
\begin{theorem} \label{mmvSN.theorem.verification.0}
	{\bf (Verification Theorem)} Suppose that $W: (s,x,y,\lambda) \mapsto[s,T] \times \mathbb{R} \times \mathbb{R}_{\geq 0} \times \mathbb{R}_{>0}$ is a $C^{1,2,2,2}$ function satisfying the following condition
	\begin{align}
	\label{mmvSN.hjbi.equation}
	\begin{cases}
	&\mathcal{L}_t^{a^*,b^*}W(t,x,y,\lambda) = 0, \qquad \forall (t,x,y,\lambda) \in [s,T) \times \mathbb{R} \times \mathbb{R}_{\geq 0} \times \mathbb{R}_{>0}, \\
	&\mathcal{L}_t^{a^*,b}W(t,x,y,\lambda) \geq 0, \qquad \forall b \in \mathcal{B}[s,T], \qquad \forall (t,x,y,\lambda) \in [s,T) \times \mathbb{R} \times \mathbb{R}_{\geq 0} \times \mathbb{R}_{>0}, \\
	&\mathcal{L}_t^{a,b^*}W(t,x,y,\lambda) \leq 0, \qquad \forall a \in \mathcal{A}[s,T], \qquad \forall (t,x,y,\lambda) \in [s,T) \times \mathbb{R} \times \mathbb{R}_{\geq 0} \times \mathbb{R}_{>0}, \\
	&W(T,x,y,\lambda) = xy + \frac{1}{2 \theta} y^2, \qquad \forall (x,y,\lambda) \in \mathbb{R} \times \mathbb{R}_{\geq 0} \times \mathbb{R}_{>0}, 
	\end{cases}
	\end{align}
	where $\mathcal{L}_t^{a,b}$ is the infinitesimal generator of $(X(t),Y(t),\lambda(t))$ given by
	\begin{align}
	\nonumber \mathcal{L}_t^{a,b} W(t,x,y,\lambda) =& W_t + \biggl\{ r x+\pi (\mu_0 -  r ) + (\kappa_ru - \kappa_r + \kappa)\mu_1\lambda  \\
	\nonumber &+ (\iota_r v - \iota_r + \iota)k\mu_2\rho\biggr\} W_x  +(-\delta \lambda + \rho \mu_2) W_\lambda + \frac{1}{2} \pi^2  \sigma_0 ^2 W_{xx} + \frac{1}{2} y^2 o^2 W_{yy} + \pi  \sigma_0  y o W_{xy}\\
	\nonumber &+ \lambda \int_{\mathbb{R}_{>0}} \{ W(t,x-u z,y+yp(z),\lambda) - W(t,x,y,\lambda) + u z W_x - y p(z) W_y \} F_1(dz) \\
	\label{mmvSN.generator}& + \rho \int_{\mathbb{R}_{>0}} \{ W(t,x-k v z,y+yq(z),\lambda+z) - W(t,x,y,\lambda) + k v z W_x - yq(z) W_y - z W_\lambda \} F_2(dz).
	\end{align}
	Then, 
	\begin{align*}
	W(s,x,y,\lambda) = \sup_{a \in \mathcal{A}[s,T]} \inf_{b \in \mathcal{B}[s,T]} J^{a,b}(s,x,y,\lambda).
	\end{align*}
\end{theorem}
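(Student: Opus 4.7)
The plan is a standard verification argument for a two-player zero-sum stochastic differential game, adapted to the jump setting. I fix an arbitrary initial point $(s,x,y,\lambda)$ and consider the controlled triple $(X,Y,\lambda)$ evolving under \eqref{mmvSN.dynamic.x}--\eqref{mmvSN.dynamic.lambda} from time $s$. The first step is to apply the generalized It\^o formula to $W(t,X(t),Y(t),\lambda(t))$ on $[s,T]$, yielding
\begin{equation*}
W(T,X(T),Y(T),\lambda(T)) = W(s,x,y,\lambda) + \int_s^T \mathcal{L}_r^{a,b} W \, dr + M_T - M_s,
\end{equation*}
where $M$ gathers the Brownian and compensated-Poisson-measure stochastic integrals carrying $W_x$, $W_y$, $W_\lambda$ and the jump brackets appearing in \eqref{mmvSN.generator}. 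Under admissibility $M$ is at least a local martingale, and the terminal condition $W(T,X(T),Y(T),\lambda(T)) = X(T)Y(T) + \frac{1}{2\theta} Y(T)^2$ identifies the expectation of the left-hand side with $J^{a,b}(s,x,y,\lambda)$ from \eqref{obj.3}.

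With this identity in hand I run the It\^o argument three times. Applying it with $(a^*,b)$ for any $b\in\mathcal{B}[s,T]$ and using $\mathcal{L}_t^{a^*,b} W \geq 0$, once the local-martingale piece is killed in expectation, gives $J^{a^*,b}(s,x,y,\lambda) \geq W(s,x,y,\lambda)$, hence $\inf_{b} J^{a^*,b} \geq W$. With $(a,b^*)$ for any $a\in\mathcal{A}[s,T]$ the reverse inequality $\mathcal{L}_t^{a,b^*} W \leq 0$ analogously yields $\sup_{a} J^{a,b^*} \leq W$. Combining these with the classical minimax inequality produces the chain
\begin{equation*}
W(s,x,y,\lambda) \leq \inf_{b} J^{a^*,b} \leq \sup_a \inf_b J^{a,b} \leq \inf_b \sup_a J^{a,b} \leq \sup_a J^{a,b^*} \leq W(s,x,y,\lambda),
\end{equation*}
forcing $W(s,x,y,\lambda) = \sup_{a}\inf_{b} J^{a,b}(s,x,y,\lambda)$. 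The remaining HJBI line $\mathcal{L}_t^{a^*,b^*} W = 0$ moreover pins down $W(s,x,y,\lambda) = J^{a^*,b^*}(s,x,y,\lambda)$, so $(a^*,b^*)$ is a genuine saddle point.

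The main obstacle is rigorously showing $\mathbb{E}^P[M_T - M_s] = 0$ in each of the three cases, since $Y$ may hit zero, the adversary's controls $o,p,q$ can explode there, and $M$ is only a local martingale in general. My intended remedy is to localize by $\tau_n := \zeta_n \wedge \varrho_n$, where $\zeta_n$ is the stopping time from \eqref{mmvSN.condition.integrable.property} and $\varrho_n := \inf\{t\geq s : |X(t)| + \lambda(t) \geq n\}$. Up to $\tau_n$, the integrability bound \eqref{mmvSN.condition.integrable.property} together with the square-integrability of $\pi$ and $Yo$ built into Definition \ref{mmvSN.definition.admissible} make $M^{\tau_n}$ a true martingale, so evaluating expectations at $T\wedge\tau_n$ eliminates $M$ while preserving the drift inequalities. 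I would then pass to the limit $n\to\infty$ by dominated convergence, leveraging the $L^2$-integrability of $dQ/dP$ built into $\Delta^2(P)$, the identity $\mathbb{E}^P Y(t) = 1$, the hypothesis $\mathbb{E}^P X(t) < \infty$ in Definition \ref{mmvSN.definition.admissible}, and the at-most quadratic growth of $W$ in $(x,y)$ inherited from the terminal data $xy+\frac{1}{2\theta}y^2$. Once this localization-and-limit is pinned down, the rest of the argument is a direct transcription of the standard saddle-point verification proof as in \cite{yeung2006cooperative} and \cite{mataramvura2008risk}.
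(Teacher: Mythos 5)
Your proposal is correct and matches the paper's approach: the paper defers this theorem to the standard saddle-point verification argument of \cite{yeung2006cooperative} and \cite{mataramvura2008risk}, and carries it out explicitly in the proof of Theorem \ref{mmvSN.theorem.solution.explicit}, which is exactly your scheme of applying It\^o's formula, localizing with a stopping time (there denoted $\tau_B$), invoking the three HJBI (in)equalities, and passing to the limit to obtain the chain $J^{a,b^*} \leq J^{a^*,b^*} = W \leq J^{a^*,b}$ and hence the sup-inf identity. Your localization by $\zeta_n \wedge \varrho_n$ and the dominated-convergence justification are, if anything, slightly more explicit than the paper's treatment of the same step.
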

The infinitesimal generator is derived by using the piece-wise deterministic processes theory in \cite{davis1984piecewise-deterministic}. For more details about the infinitesimal generator of Cox process driven by shot noise intensity, see \cite{dassios1987insurance}, \cite{dassios1989martingales}, \cite{dassios2003pricing} and \cite{jang2003shot}.

\section{Value Function and Optimal Strategies} \label{mmvSN.section.main_result}

In this section, we shall first build a candidate solution to the HJBI equation \eqref{mmvSN.hjbi.equation} based on Proposition \ref{mmvSN.lemma.solution.suppose}. Then We shall conduct an analysis of the optimal strategies. Subsequently, we shall demonstrate that the constructed candidate solution indeed represents the value function of Problem \eqref{obj.0}, and we will also provide its explicit form. At the end of this section, we shall present the efficient frontier for our problem. For the sake of clarity, we defer the proofs to Appendix A.

We show the main result for the optimal strategies and the value function as follows:
\begin{theorem} \label{mmvSN.main_result}
	\par Assume the initial values $X(0) = x_0$, $Y(0) = 1$ and $\lambda(0) = \lambda_0$. The value function for Problem \eqref{obj.0} is given by
	\begin{align*}
	W(t,x) = -\frac{\theta(e^{r(T-t)}x+\alpha(t)\lambda(t) + \beta(t))^2}{4e^{\eta(t) \lambda(t) + \zeta(t)}} + \frac{\theta(e^{rT}x_0+\frac{2}{\theta}e^{\eta(0) \lambda_0 + \zeta(0)}+\alpha(0)\lambda_0 + \beta(0))^2}{4e^{\eta(t) \lambda(t) + \zeta(t)}},
	\end{align*}
	and the optimal feedback strategies are given as follows
	\begin{align*}
	\begin{cases}
	\pi^*(t,x) =& \frac{ \mu_0 -  r  }{ \sigma_0 ^2} \bigg( - x  + x_0 e ^{rt} + \frac{1}{\theta} e^{\eta(0)\lambda_0 + \zeta(0) - r(T-t)} - e^{-r(T-t)}[\alpha(t)\lambda(t)+\beta(t) - \alpha(0)\lambda_0 - \beta(0)] \bigg), \\
	u^*(t,x) =& \frac{\kappa_r \mu_1 }{ \sigma_1^2}  \bigg( - x  + x_0 e ^{rt} + \frac{1}{\theta} e^{\eta(0)\lambda_0 + \zeta(0) - r(T-t)} - e^{-r(T-t)}[\alpha(t)\lambda(t)+\beta(t) - \alpha(0)\lambda_0 - \beta(0)] \bigg), \\
	v^*(t,x) =& \frac{1}{ k } \phi(t) \bigg( - x  + x_0 e ^{rt} + \frac{1}{\theta} e^{\eta(0)\lambda_0 + \zeta(0) - r(T-t)} - e^{-r(T-t)}[\alpha(t)\lambda(t)+\beta(t) - \alpha(0)\lambda_0 - \beta(0)] \bigg)  \\
	&+ \frac{\alpha(t)}{k} e ^{-r(T-t)},
	\end{cases}
	\end{align*}	
	where
	\begin{align*}
	\begin{cases}
	\eta(t) =& \frac{\kappa_r^2   \mu_1^2}{\delta \sigma_1^2}(1-e^{-\delta(T-t)}), \\
	\zeta(t) =& \rho \int_t^T  \biggl\{ \phi(s)^2 \int_{\mathbb{R}_{>0}}  z^2 e^{-\eta(s) z} F_2(dz) - \int_{\mathbb{R}_{>0}}  e^{-\eta(s) z} F_2(dz)  \biggr\} ds + (\rho  + \frac{ ( \mu_0 -  r )^2  }{    \sigma_0 ^2})(T-t), \\
	\alpha(t) =& \frac{(\kappa_r - \kappa)\mu_1}{\delta + r} (e^{- \delta(T-t)} - e^{r(T-t)}), \\
	\beta(t) =& \frac{\rho (\kappa_r - \kappa)  (\iota_r + 1) \mu_1 \mu_2}{\delta + r} \bigg( \frac{1}{\delta}(1-e^{-\delta(T-t)})+\frac{1}{r}(1-e^{r(T-t)}) \bigg)  - \frac{\rho ( \iota_r - \iota) k\mu_2}{r} (1-e^{r(T-t)}),
	\end{cases}
	\end{align*}
	and $\phi(t) = \frac{ (\iota_r+1) \mu_2 -  \int_{\mathbb{R}_{>0}} z e^{-\eta(t) z} F_2(dz)}{\int_{\mathbb{R}_{>0}} z^2 e^{-\eta(t) z} F_2(dz)} $.
\end{theorem}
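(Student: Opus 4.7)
The plan is to apply the verification theorem (Theorem \ref{mmvSN.theorem.verification.0}) to a candidate function built from a separable ansatz. Motivated by the terminal condition $W(T,x,y,\lambda) = xy + \frac{1}{2\theta}y^{2}$ and the observation that $X$ enters the dynamics only linearly while $Y$ enters multiplicatively in both the $xy$ and $y^{2}$ terms, I would try
\begin{equation*}
W(t,x,y,\lambda) \;=\; y\bigl[e^{r(T-t)}x + \alpha(t)\lambda + \beta(t)\bigr] + \frac{y^{2}}{2\theta}\,e^{\eta(t)\lambda+\zeta(t)},
\end{equation*}
with terminal data $\alpha(T)=\beta(T)=\eta(T)=\zeta(T)=0$. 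Under this ansatz the HJBI equation splits naturally into a linear-in-$y$ identity and a quadratic-in-$y$ identity, and the functions $\alpha,\beta,\eta,\zeta$ are exactly the unknowns in the statement. I expect this ansatz to be the content of Proposition \ref{mmvSN.lemma.solution.suppose}.

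Inserting the ansatz into $\mathcal{L}_{t}^{a,b} W$ and using $W_{xx}=0$, the expression becomes linear in $a=(\pi,u,v)$. Player two's minimisation produces closed-form expressions $o^{*},p^{*}(z),q^{*}(z)$ by pointwise differentiation in $z$; writing $h := e^{\eta \lambda + \zeta}$, one obtains $o^{*} = -\theta \pi \sigma_{0} e^{r(T-t)}/(yh)$, $p^{*}(z) = \theta u e^{r(T-t)} z/(yh)$, and $q^{*}(z) = e^{-\eta z}\bigl[1 - \theta z(\alpha - k v e^{r(T-t)})/(yh)\bigr] - 1$. Player one's problem is linear in $(\pi,u,v)$, so finiteness of the maximum forces the coefficients of these three controls to vanish at the saddle; coupling those three equations with $o^{*},p^{*},q^{*}$ yields the candidate controls. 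During this step the moment-type functions $M_{j}(\eta):=\int_{\mathbb{R}_{>0}} z^{j} e^{-\eta z} F_{2}(dz)$ arise naturally in the equation for $v^{*}$, producing the ratio $\phi(t) = \bigl[(\iota_{r}+1)\mu_{2} - M_{1}(\eta(t))\bigr]/M_{2}(\eta(t))$ of the theorem.

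Substituting the saddle controls back and demanding $\mathcal{L}^{a^{*},b^{*}}W\equiv 0$, I would match coefficients of $\lambda$ and of the constant term in each of the $y$- and $y^{2}$-identities. The $y^{2}$-identity delivers the linear ODE $\eta'(t) = \delta\,\eta(t) - \kappa_{r}^{2}\mu_{1}^{2}/\sigma_{1}^{2}$ with $\eta(T)=0$, whose solution is precisely the stated $\eta$, followed by a direct quadrature for $\zeta$; the $y$-identity produces two linear ODEs for $\alpha$ and $\beta$ whose integration yields the listed formulas. To rewrite the candidate controls in $(t,x,\lambda)$-feedback form one uses the key algebraic identity
\begin{equation*}
\frac{Y(t)}{\theta}\,e^{\eta(t)\lambda(t)+\zeta(t)} + e^{r(T-t)}X(t) + \alpha(t)\lambda(t) + \beta(t) \;=\; x_{0} e^{rT} + \tfrac{1}{\theta} e^{\eta(0)\lambda_{0}+\zeta(0)} + \alpha(0)\lambda_{0} + \beta(0),
\end{equation*}
valid along the saddle trajectory; this identity would be verified by checking that the drift, diffusion and jump coefficients of its left-hand side all vanish under $(a^{*},b^{*})$, and it is what produces the displayed closed form of $W(t,x)$ as well as the appearance of the initial-condition constants inside $\pi^{*},u^{*},v^{*}$.

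The hard part will be verifying admissibility of $b^{*}$: the induced $Y$-process must be a nonnegative, square-integrable $P$-martingale with $\mathbb{E}^{P}Y(t)=1$, and must satisfy the localised integrability condition \eqref{mmvSN.condition.integrable.property}, even though $b^{*}$ depends on the state (and hence on $Y$ itself) so no global Novikov-type bound is available. I would handle this via the stopping-time localisation with the hitting times $\zeta_{n}$ introduced in Section 2, adapting the argument of \cite{li2022Monotone} to the present pure-jump catastrophe setting in which the claim jumps $\widetilde N_1$ and the intensity jumps $\widetilde N_2$ drive the dynamics simultaneously. Admissibility of $a^{*}$ then reduces to $\mathbb{E}^{P}|X^{a^{*}}(t)|^{2}<\infty$, which follows from the linear-in-$x$ form of $\pi^{*},u^{*},v^{*}$ together with standard moment estimates for SDEs with compensated jumps. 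Once both admissibilities are in hand, Theorem \ref{mmvSN.theorem.verification.0} identifies $W(0,x_{0},1,\lambda_{0})$ with $V_{\theta}(X(T))$, and the algebraic identity above rewrites $W$ along the optimal trajectory in the form displayed in the theorem.
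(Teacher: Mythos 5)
Your proposal is correct and follows essentially the same route as the paper: your explicit ansatz is the composite of Proposition \ref{mmvSN.lemma.solution.suppose} and Lemma \ref{mmvSN.lemma.coefficients}, your key algebraic identity is exactly Lemma \ref{mmvSN.lemma.relationship_of_XY} (with $s=0$, $Y(0)=1$), and the final appeal to the verification theorem is Theorem \ref{mmvSN.theorem.solution.explicit}. The only real deviation is that admissibility of $b^*$ is easier than you anticipate: once the explicit forms are substituted, the $y$- and $\lambda$-dependence in $q^*$ cancels, so $b^*=(o^*,p^*,q^*)$ is deterministic with $p^*,q^*\geq -1$ and square-integrable against the compensators, and the paper concludes that $Y$ is a square-integrable martingale directly from \cite{kabanov1979absolute} without any $\zeta_n$-localisation.
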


To achieve the aforementioned result, we employ a three-step approach. Firstly, we tackle Problem \eqref{obj.3}, and the outcomes shall be presented in Proposition \ref{mmvSN.lemma.solution.suppose}. Problem \eqref{obj.3} involves three processes, namely $X(t)$, $Y(t)$, and $\lambda(t)$. It is worthnoting that the insurer can only observe the surplus process $X(t)$ and the intensity process $\lambda(t)$. As $Y(t)$ serves as an auxiliary process introduced to facilitate problem-solving, it is unobservable. However, Proposition \ref{mmvSN.lemma.solution.suppose} provides expressions of the value function and the optimal strategies as functions of the auxiliary process $Y(t)$. To remove the dependency on $Y(t)$, we take the second step that establish a relationship between the processes $X(t)$ and $Y(t)$(see Lemma \ref{mmvSN.lemma.relationship_of_XY}). Finally, in the third step, we can represent the value function and optimal strategies solely as functions of $X(t)$ and $\lambda(t)$ (see Proposition \ref{mmvSN.proposition.optimal_function}).

\begin{proposition} \label{mmvSN.lemma.solution.suppose}
	
	For any $(t,\lambda) \in [0,T]\times \mathbb{R}_{>0}$, if there exist sufficiently smooth functions $G(t)$, $H(t,\lambda)(>0)$, $I(t,\lambda)$ and $K(t,\lambda)$ satisfying the following differential equations, respectively\footnote{For any appropriate function $\psi(t,\lambda)$, define $\psi_z : = \psi(t,\lambda+z)-\psi(t,\lambda)$ as its finite difference on the spatial variable.}
	\begin{align}
	\label{mmvSN.equation.function.value.coefficients}
	\begin{cases}
	0 =& G_t +  r G(t), \\
	0 = &H_t -\delta \lambda H_{\lambda} +  \rho \int_{\mathbb{R}_{>0}} H_z F_2(dz) + \frac{H(t,\lambda) ( \mu_0 -  r )^2 }{    \sigma_0 ^2} + \frac{H(t,\lambda)  \lambda \kappa_r^2  \mu_1^2 }{ \sigma_1^2} \\
	&+ \frac{\rho \bigg(\iota_r  \mu_2 + 2 {\bf \widetilde{H}}(H_z) \bigg)^2 }{ 2 {\bf \widetilde{H}}(z)}  - 2 \rho  {\bf \widetilde{H}}(\frac{H_z^2}{z}), \\
	0 =& I_t -\delta \lambda I_{\lambda} +  \rho \int_{\mathbb{R}_{>0}} I_z F_2(dz) + \bigg( ( - \kappa_r + \kappa)\mu_1\lambda  + ( - \iota_r + \iota) k \mu_2\rho\bigg) G(t) \\
	&+ \frac{  \rho \iota_r \mu_2 {\bf \widetilde{H}}(I_z) }{ {\bf \widetilde{H}}(z)} - \rho \biggl\{ 2 {\bf \widetilde{H}}(\frac{H_z I_z}{z}) - 2 \frac{{\bf \widetilde{H}}(H_z) {\bf \widetilde{H}}(I_z)}{{\bf \widetilde{H}}(z)} \biggr\} , \\
	0 =& K_t -\delta \lambda K_{\lambda} +  \rho \int_{\mathbb{R}_{>0}} K_z F_2(dz) - \frac{\rho}{2} \biggl\{ {\bf \widetilde{H}}(\frac{I_z^2}{z}) - \frac{{\bf \widetilde{H}}(I_z)^2}{{\bf \widetilde{H}}(z)} \biggr\} ,
	\end{cases}
	\end{align}
	with boundry conditions $G(T) = 1$, $H(T,\lambda) = \frac{1}{2\theta}$, $I(T,\lambda) = K(T,\lambda) = 0$, where
	\begin{align*}
	{\bf \widetilde{H}}(f) := \int_{\mathbb{R}_{>0}} \frac{f(z) z}{2 H(t,\lambda+z)}F_2(dz),
	\end{align*}
	for all integrable function $f(z)$, then the solution to the HJB equation \eqref{mmvSN.hjbi.equation} is
	\begin{equation}
	\label{mmvSN.value.function.W}
	W(t,x,y,\lambda) = G(t)xy + H(t,\lambda)y^2 +I(t,\lambda)y + K(t,\lambda).
	\end{equation}
	Moreover, the corresponding optimal feedback strategies are given by
	\begin{numcases}{}
	\nonumber
	\pi^*(t,x,y,\lambda) = \frac{2H(t,\lambda) ( \mu_0 -  r ) y }{ G(t)    \sigma_0 ^2}, \\
	\nonumber
	u^*(t,x,y,\lambda)= \frac{2H(t,\lambda)\kappa_r \mu_1 y}{ G(t)   \sigma_1^2},  \\
	\nonumber
	v^*(t,x,y,\lambda) = \frac{1}{kG(t)}\biggl\{\frac{\iota_r  \mu_2 y }{ {\bf \widetilde{H}}(z)} + \frac{2y{\bf \widetilde{H}}(H_z) }{ {\bf \widetilde{H}}(z)} + \frac{{\bf \widetilde{H}}(I_z)}{ {\bf \widetilde{H}}(z)}\biggr\}, \\
	\label{mmvSN.strategy.saddlepoint.equation}
	o^*(t,z,x,y,\lambda) = - \frac{  \mu_0 -  r   }{\sigma_0}, \\
	\nonumber
	p^*(t,z,x,y,\lambda)= \frac{ \kappa_r \mu_1 z}{\sigma_1^2},  \\
	\nonumber
	q^*(t,z,x,y,\lambda)= -\frac{1 }{2H(t,\lambda+z)y} \biggl\{\bigg(2  ({\bf 1}- \frac{z{\bf \widetilde{H}} }{ {\bf \widetilde{H}}(z)})(H_z) -  \frac{\iota_r   \mu_2  z}{ {\bf \widetilde{H}}(z)}\bigg)y +  ({\bf 1}- \frac{z{\bf \widetilde{H}} }{ {\bf \widetilde{H}}(z)})(I_z)\biggr\}.
	\end{numcases}
\end{proposition}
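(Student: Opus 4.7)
The plan is to verify the separable ansatz $W(t,x,y,\lambda)=G(t)\,xy+H(t,\lambda)\,y^{2}+I(t,\lambda)\,y+K(t,\lambda)$ solves the HJBI system \eqref{mmvSN.hjbi.equation} and simultaneously yields the stated feedback saddle point. The terminal condition $W(T,x,y,\lambda)=xy+\tfrac{1}{2\theta}y^{2}$ forces the boundary data $G(T)=1$, $H(T,\lambda)=\tfrac{1}{2\theta}$, $I(T,\lambda)=K(T,\lambda)=0$, matching \eqref{mmvSN.equation.function.value.coefficients}. Substituting the ansatz into the generator \eqref{mmvSN.generator}, I would use $W_{x}=Gy$, $W_{y}=Gx+2Hy+I$, $W_{xx}=0$, $W_{yy}=2H$, $W_{xy}=G$. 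After the compensator terms $uzW_{x}$ and $yp(z)W_{y}$ cancel the linear-in-$u,p$ pieces from expanding $W$ at the post-jump state, the $N_{1}$-integrand collapses to $\lambda\{Hy^{2}p(z)^{2}-Guzy\,p(z)\}$, which is pointwise quadratic in $p$ with positive coefficient. An analogous but messier cancellation in the $N_{2}$-integrand, where the $-zW_{\lambda}$ compensator neatly absorbs the $\rho\mu_{2}W_{\lambda}$ drift contribution, leaves $H_{z}y^{2}+2H_{z}y^{2}q+H(t,\lambda+z)y^{2}q(z)^{2}+I_{z}y+I_{z}yq(z)+K_{z}-Gkvz\,yq(z)$, pointwise quadratic in $q(z)$ with positive coefficient $\rho H(t,\lambda+z)y^{2}$.

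Next I would derive the saddle point by imposing the six first-order conditions simultaneously. For player two the conditions in $o$, $p(z)$, $q(z)$ are genuine quadratic minima, giving pointwise expressions linear in $u,v$ respectively. For player one the conditions in $\pi$, $u$, $v$ are in fact \emph{linear} because $W_{xx}=0$ and the quadratic-in-$u,v$ pieces arise only through the jump integrands; requiring the linear coefficients to vanish forces the identities $o^{\ast}=-(\mu_{0}-r)/\sigma_{0}$, $\int zp^{\ast}(z)F_{1}(dz)=\kappa_{r}\mu_{1}$, and $\int zq^{\ast}(z)F_{2}(dz)=\iota_{r}\mu_{2}$. Coupling these with the three pointwise minima produces a closed system whose solution I write down explicitly: $\pi^{\ast}$ and $u^{\ast}$ drop out immediately as in \eqref{mmvSN.strategy.saddlepoint.equation}, while $v^{\ast}$ and $q^{\ast}$ are obtained by substituting the $q$-first-order condition into the $v$-first-order condition; after organising the result via the nonlocal operator $\mathbf{\widetilde H}(\cdot)$ I recover $v^{\ast}=\tfrac{1}{kG}\bigl\{\iota_{r}\mu_{2}y/\mathbf{\widetilde H}(z)+2y\mathbf{\widetilde H}(H_{z})/\mathbf{\widetilde H}(z)+\mathbf{\widetilde H}(I_{z})/\mathbf{\widetilde H}(z)\bigr\}$ and then $q^{\ast}$ in the exact form stated.

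Third, I would substitute $(a^{\ast},b^{\ast})$ back into $\mathcal{L}_{t}^{a^{\ast},b^{\ast}}W=0$ and separate the resulting identity in the monomials $xy$, $y^{2}$, $y$, $1$. The $xy$-coefficient gives $G'(t)+rG(t)=0$, matching the first equation in \eqref{mmvSN.equation.function.value.coefficients}. The $y^{2}$-coefficient, after combining $H(o^{\ast})^{2}$, the completed square in $p$ on the $N_{1}$-integrand, the completed square in $q$ on the $N_{2}$-integrand, and the drift $-\delta\lambda H_{\lambda}+\rho\int H_{z}F_{2}(dz)$, yields the semilinear nonlocal PDE for $H$, with the characteristic term $\bigl(\iota_{r}\mu_{2}+2\mathbf{\widetilde H}(H_{z})\bigr)^{2}/(2\mathbf{\widetilde H}(z))$ arising from the squared-to-linear reduction $v^{\ast}\cdot(\text{FOC})$, and the residual $-2\rho\mathbf{\widetilde H}(H_{z}^{2}/z)$ from the $H^{+}q^{2}$-piece at the optimum. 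The $y$-coefficient gives the linear-in-$I$ equation with cross term $-2\rho\bigl\{\mathbf{\widetilde H}(H_{z}I_{z}/z)-\mathbf{\widetilde H}(H_{z})\mathbf{\widetilde H}(I_{z})/\mathbf{\widetilde H}(z)\bigr\}$, and the constant coefficient gives the $K$-equation $-\tfrac{\rho}{2}\{\mathbf{\widetilde H}(I_{z}^{2}/z)-\mathbf{\widetilde H}(I_{z})^{2}/\mathbf{\widetilde H}(z)\}$.

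The main obstacle is the pointwise minimisation in $q(z)$ inside the nonlocal operator combined with the integrated first-order condition $\int zq\,F_{2}(dz)=\iota_{r}\mu_{2}$ that couples $v$ and $q$; once the $\mathbf{\widetilde H}$-notation is introduced and the six first-order conditions are solved as a coupled system, the separation of coefficients becomes mechanical. The saddle-point property itself follows from strict convexity in $(o,p,q)$ (guaranteed by $H>0$ and $H(t,\lambda+z)>0$, propagated backwards from the positive terminal value $\tfrac{1}{2\theta}$) together with the fact that at the critical point player one's problem is linear in $(\pi,u,v)$, so that the coefficient-vanishing identities certify a max along every ray; the full verification is then closed by the standing verification theorem \ref{mmvSN.theorem.verification.0}.
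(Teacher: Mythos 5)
Your proposal is correct and reproduces the paper's formulas, but it executes the middle step differently. The paper optimizes sequentially: for fixed $a=(\pi,u,v)$ it first minimizes the generator over $b=(o,p,q)$ (each a genuine pointwise quadratic minimum since $H>0$), substitutes the resulting $b^*(a)$ back in, and only then maximizes the reduced expression, which has become strictly concave in $(\pi,u,v)$ through terms such as $-\pi^2\sigma_0^2G^2/(4H)$ and $-\rho G^2k^2v^2\widetilde{\mathbf H}(z)/2$; the coupling between $v$ and $q$ is thereby resolved automatically. You instead solve the six first-order conditions as a coupled system, exploiting the fact that $W_{xx}=0$ makes the generator affine in $(\pi,u,v)$ for fixed $b$, so the player-one conditions reduce to the integral constraints $o^*=-(\mu_0-r)/\sigma_0$, $\int zp^*F_1(dz)=\kappa_r\mu_1$, $\int zq^*F_2(dz)=\iota_r\mu_2$, which combined with the pointwise player-two minima yield the same saddle point. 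Each route buys something: the paper's substitution makes the maximality over $a$ manifest via concavity of the reduced functional, while your framing certifies the verification-theorem inequality $\mathcal{L}^{a,b^*}W\le 0$ more directly (the map $a\mapsto\mathcal{L}^{a,b^*}W$ is affine with vanishing gradient at $a^*$, hence identically zero) and makes transparent why the $v$--$q$ coupling closes through the operator $\widetilde{\mathbf H}$. The final separation into the $xy$, $y^2$, $y$ and constant coefficients, and the resulting system \eqref{mmvSN.equation.function.value.coefficients} with its boundary data read off from $W(T,x,y,\lambda)=xy+\frac{1}{2\theta}y^2$, is identical in both arguments.
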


We observe that the optimal feedback strategies \eqref{mmvSN.strategy.saddlepoint.equation} only depend on the auxiliary process $Y(t)$ rather than the surplus process $X(t)$. However $Y(t)$ is not observable in practice. To address this problem, we shall introduce a lemma which shows the relationship of $(X(t),Y(t))$ and $(X(s),Y(s))$ for any $0 \leq s \leq t \leq T$. Then we can rewrite the optimal feedbacks $(\pi,u,v)$ as functions that only depend on the current value of surplus process $X(t)$, and the initial values $(X(s),Y(s))$ at time $s$. Here is the lemma:

\begin{lemma} \label{mmvSN.lemma.relationship_of_XY}
	\par Let $X^*(t)$ and $Y^*(t)$ be the state processes under the feedback strategies defined by \eqref{mmvSN.strategy.saddlepoint.equation}, then for any $0 \leq s \leq t \leq T$, we have
	\begin{align}
	\label{mmvSN.equation.relationship_of_XY} X^*(t) G(t) - X^*(s) G(s)= - 2   Y^*(t) H(t,\lambda(t)) + 2 Y^*(s) H(s,\lambda(s))  -  I(t,\lambda(t)) + I(s,\lambda(s)).
	\end{align}
\end{lemma}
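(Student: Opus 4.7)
The plan is to show that the process
\[
\Phi(t) := X^*(t) G(t) + 2 Y^*(t) H(t,\lambda(t)) + I(t,\lambda(t))
\]
is constant on $[0,T]$; once this is established, the identity $\Phi(t)=\Phi(s)$ is exactly \eqref{mmvSN.equation.relationship_of_XY} after rearranging. To prove constancy, I will apply It\^o's formula for jump semimartingales to each summand of $\Phi$ and show that the drift, the $dW_0$ coefficient, and the $\widetilde{N}_1$- and $\widetilde{N}_2$-integrands of $d\Phi$ all vanish pathwise.

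For $d[X^*G]$, the product rule together with $G'(t)=-rG(t)$ (from the first ODE of \eqref{mmvSN.equation.function.value.coefficients}) absorbs the $rX^*$ drift in \eqref{mmvSN.dynamic.x}. For $d[2Y^*H(t,\lambda(t))]$, I expand $dH$ by It\^o on the drift-plus-pure-jump process $\lambda$, producing the drift $H_t-\delta\lambda H_\lambda+\rho\int H_z F_2(dz)$ and a compensated $\widetilde{N}_2$-integrand $H_z$, and then combine with \eqref{mmvSN.dynamic.y} via the semimartingale product formula, the covariation contributing $q^*(t,z)H_z Y^*_-$ at every $N_2$-jump. Applying It\^o to $I(t,\lambda(t))$ gives the drift $I_t-\delta\lambda I_\lambda+\rho\int I_z F_2(dz)$ and a compensated $\widetilde{N}_2$-integrand $I_z$. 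The $dW_0$ and $\widetilde{N}_1$ channels then cancel by inspection: $G\pi^*\sigma_0+2HY^*o^*=0$ and $-Gzu^*+2HY^*_-p^*=0$ follow directly from the explicit formulas in \eqref{mmvSN.strategy.saddlepoint.equation}. The $\widetilde{N}_2$ channel collects $-Gkzv^*+2H(t,\lambda+z)Y^*_-q^*(t,z)+2Y^*_-H_z+I_z$, which vanishes upon substituting the formulas for $v^*$ and $q^*$, both built from the operator $\widetilde{H}$. Finally, the drift channel, after substituting $\pi^*,u^*,v^*$ and regrouping integrals into $\widetilde{H}(z)$, $\widetilde{H}(H_z)$, and $\widetilde{H}(I_z)$, collapses to zero thanks to the $H$- and $I$-equations of \eqref{mmvSN.equation.function.value.coefficients}.

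The main obstacle is the bookkeeping of the $\widetilde{N}_2$ channel: since both $q^*$ and $H$ are evaluated at $\lambda+z$ at jump times, the correct jump increment of $2Y^*H$ at an $N_2$-jump is $2\bigl[H(t,\lambda+z)(Y^*_-+Y^*_-q^*(t,z))-H(t,\lambda)Y^*_-\bigr]$ rather than the naive $2H\,\Delta Y^*+2Y^*_-\,\Delta H$. Decomposing this into a predictable piece (absorbed by the $H$-PDE through the $\widetilde{H}$-integrals) and a compensated-jump piece (which cancels against $-Gkzv^*+I_z$ by the construction of $q^*$) is the delicate step; once this is handled correctly, the remaining cancellations are routine and $d\Phi(t)\equiv 0$ follows.
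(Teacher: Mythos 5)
Your proposal is correct and is essentially the paper's own argument: the paper likewise reduces the lemma to showing $d\bigl(X^*(t)G(t)\bigr) = -2\,d\bigl(Y^*(t)H(t,\lambda(t))\bigr) - dI(t,\lambda(t))$, i.e.\ that your $\Phi$ has vanishing differential, and carries out the same channel-by-channel cancellation using the explicit feedback forms and the PDE system \eqref{mmvSN.equation.function.value.coefficients}. The delicate point you single out --- the covariation $d[H(\cdot,\lambda(\cdot)),Y]$ contributing $H_z Y^*_- q^*$ at each $N_2$-jump, split into its compensator and a compensated part --- is exactly the step the paper treats in its long display, so nothing is missing.
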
 
Note that the process $\lambda(t)$ does not depend on the controls, we can easily calculate its value at any time $t \in [0,T]$. For simplicity, in the following context, we denote $W(t,x,y):=W(t,x,y,\lambda(t))$ and $W(t,x):=W(t,x,\lambda(t))$. By substituting \eqref{mmvSN.equation.relationship_of_XY} into \eqref{mmvSN.value.function.W} and \eqref{mmvSN.strategy.saddlepoint.equation}, we can rewrite the value function and the optimal strategies as follows:
\begin{proposition} \label{mmvSN.proposition.optimal_function}
	\par For any $0 \leq s \leq t \leq T$, let $X^*(t)$ and $Y^*(t)$ be the state processes under the feedback strategies defined by \eqref{mmvSN.strategy.saddlepoint.equation} and let $X^*(s)$ and $Y^*(s)$ be the initial values at time $s$, then the value function is given by
	\begin{align}
	\label{mmvSN.value_function.s}
	W(t,X^*(t)) = -\frac{(G(t)X^*(t)+I(t,\lambda(t)))^2}{4H(t,\lambda(t))} + \frac{(G(s)X^*(s)+2H(s,\lambda(s)Y^*(s)+I(s,\lambda(s)))^2}{4H(t,\lambda(t))}+K(t,\lambda(t)),
	\end{align}
	and the optimal strategies $\pi^*$, $u^*$ and $v^*$ at time $t$ are given by:
	\begin{align}
	\label{mmvSN.strategy.saddlepoint.equation.precommitted}
	\begin{cases}
	\pi^*(t) =& \frac{ \mu_0 -  r  }{ \sigma_0 ^2} \bigg( - X^*(t)  + X^*(s) \frac{G(s)}{G(t)} + 2 Y^*(s) \frac{H(s,\lambda(s))}{G(t)}  -  \frac{I(t,\lambda(t))}{G(t)} + \frac{I(s,\lambda(s))}{G(t)} \bigg), \\
	u^*(t) =& \frac{\kappa_r \mu_1 }{ \sigma_1^2}  \bigg( - X^*(t)  + X^*(s) \frac{G(s)}{G(t)} + 2 Y^*(s) \frac{H(s,\lambda(s))}{G(t)}  -  \frac{I(t,\lambda(t))}{G(t)} + \frac{I(s,\lambda(s))}{G(t)} \bigg), \\
	v^*(t) =& \frac{1}{2 k H(t,\lambda(t))}\bigg(\frac{\iota_r  \mu_2 }{  {\bf \widetilde{H}}(z)} + \frac{2 {\bf \widetilde{H}}(H_z) }{  {\bf \widetilde{H}}(z)}\bigg)  \bigg( - X^*(t)  + X^*(s) \frac{G(s)}{G(t)} + 2 Y^*(s) \frac{H(s,\lambda(s))}{G(t)}  -  \frac{I(t,\lambda(t))}{G(t)} + \frac{I(s,\lambda(s))}{G(t)} \bigg)  \\
	&+ \frac{1}{kG(t)}\frac{{\bf \widetilde{H}}(I_z)}{ {\bf \widetilde{H}}(z)}.
	\end{cases}
	\end{align}
\end{proposition}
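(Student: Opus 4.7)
The plan is to treat this Proposition as a direct corollary of Proposition \ref{mmvSN.lemma.solution.suppose} together with Lemma \ref{mmvSN.lemma.relationship_of_XY}: the former gives the value function and the optimal feedback strategies as explicit functions of the auxiliary state $Y^*(t)$, while the latter provides a pathwise identity linking $Y^*(t)$ to the observable quantities $X^*(t)$, $\lambda(t)$ together with the initial data $(X^*(s),Y^*(s),\lambda(s))$. All that remains is to solve this identity for $Y^*(t)$ and substitute.

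First I would rearrange \eqref{mmvSN.equation.relationship_of_XY} to isolate $Y^*(t)$:
\begin{align*}
2H(t,\lambda(t))Y^*(t) = A(s) - B(t),
\end{align*}
where I introduce the shorthand $A(s):=G(s)X^*(s)+2H(s,\lambda(s))Y^*(s)+I(s,\lambda(s))$ and $B(t):=G(t)X^*(t)+I(t,\lambda(t))$. Since $H(t,\lambda(t))>0$ by assumption in Proposition \ref{mmvSN.lemma.solution.suppose}, this solves uniquely for $Y^*(t)=(A(s)-B(t))/(2H(t,\lambda(t)))$.

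Next I substitute this expression into the ansatz $W(t,x,y,\lambda)=G(t)xy+H(t,\lambda)y^2+I(t,\lambda)y+K(t,\lambda)$ from \eqref{mmvSN.value.function.W}. Grouping the linear-in-$Y^*(t)$ terms gives $Y^*(t)B(t)+H(t,\lambda(t))Y^*(t)^2+K(t,\lambda(t))$, and then inserting $Y^*(t)=(A(s)-B(t))/(2H(t,\lambda(t)))$ produces
\begin{align*}
W(t,X^*(t))=\frac{(A(s)-B(t))B(t)}{2H(t,\lambda(t))}+\frac{(A(s)-B(t))^2}{4H(t,\lambda(t))}+K(t,\lambda(t))=\frac{A(s)^2-B(t)^2}{4H(t,\lambda(t))}+K(t,\lambda(t)),
\end{align*}
after collapsing $2(A-B)B+(A-B)^2=A^2-B^2$. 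Rewriting $A(s)^2$ and $B(t)^2$ in their expanded form yields exactly \eqref{mmvSN.value_function.s}. For the strategies, the same substitution $Y^*(t)=(A(s)-B(t))/(2H(t,\lambda(t)))$ into \eqref{mmvSN.strategy.saddlepoint.equation} cancels the $2H(t,\lambda(t))$ factors appearing in $\pi^*$, $u^*$ and the first summand of $v^*$, and then dividing by $G(t)$ and regrouping terms reproduces the three displayed formulas in \eqref{mmvSN.strategy.saddlepoint.equation.precommitted} (the last term $\frac{1}{kG(t)}\widetilde{H}(I_z)/\widetilde{H}(z)$ of $v^*$ is independent of $Y^*(t)$ and carries over unchanged).

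There is essentially no obstacle beyond bookkeeping; the only subtlety worth noting is that the identity in Lemma \ref{mmvSN.lemma.relationship_of_XY} is pathwise along the optimal trajectories, so the substitution is valid $P$-a.s.\ on $[s,T]$, which is all that is needed since the value function and feedbacks are evaluated along these trajectories. The positivity $H(t,\lambda(t))>0$ required for the inversion is inherited from the hypothesis of Proposition \ref{mmvSN.lemma.solution.suppose}.
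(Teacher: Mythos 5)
Your proposal is correct and follows exactly the route the paper intends: the paper presents this proposition as an immediate consequence of substituting the identity of Lemma \ref{mmvSN.lemma.relationship_of_XY} into \eqref{mmvSN.value.function.W} and \eqref{mmvSN.strategy.saddlepoint.equation}, and your algebra (solving for $Y^*(t)$ and collapsing $2B(A-B)+(A-B)^2=A^2-B^2$) is precisely the bookkeeping that justifies it. No gaps.
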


It's important to note that \eqref{mmvSN.value_function.s} and \eqref{mmvSN.strategy.saddlepoint.equation.precommitted} also rely on $Y^*(s)$. Keep in mind that $s$ is chosen as the initial time before time $t$, implying that $Y^*(s)$ is, in fact, known. In conclusion, by setting the initial time of Problem \eqref{obj.3} to be zero, and the initial values of Problem \eqref{obj.3} to be $X^*(0)=x_0$, $Y^*(0)=1$, and $\lambda(0)=\lambda_0$, we ultimately obtain the solution to the original Problem \eqref{obj.0} (see Theorem \ref{mmvSN.main_result}). Furthermore, the optimal feedbacks in \eqref{mmvSN.strategy.saddlepoint.equation} solely rely on the current value of $Y^*(t)=y$ at time $t$, making them time-consistent. However, as illustrated in \eqref{mmvSN.strategy.saddlepoint.equation.precommitted}, the strategies depend not only on $X^*(t)$ but also on the values $X^*(s)$ and $Y^*(s)$ at some time point $s$ in the past. Consequently, the optimal strategies presented in the form of \eqref{mmvSN.strategy.saddlepoint.equation.precommitted} are precommitted.

Now, we give the explicit expressions of $G(t)$, $H(t,\lambda)$, $I(t,\lambda)$ and $K(t,\lambda)$ by the following lemma.
\begin{lemma} \label{mmvSN.lemma.coefficients}
	\par The solutions to \eqref{mmvSN.equation.function.value.coefficients} are given as follows
	\begin{align}
	\label{mmvSN.equation.function.value.coefficients.solution}
	\begin{array}{l}
	G(t) = e^{r(T-t)}, \quad H(t,\lambda) = \frac{1}{2 \theta}e^{\eta(t) \lambda + \zeta(t)}, \\
	I(t,\lambda) = \alpha(t) \lambda + \beta(t), \quad K(t,\lambda) = 0,
	\end{array}
	\end{align}
	where
	\begin{align*}
	\begin{cases}
	\eta(t) =& \frac{\kappa_r^2   \mu_1^2}{\delta \sigma_1^2}(1-e^{-\delta(T-t)}), \\
	\zeta(t) =& \rho \int_t^T  \biggl\{ \phi(s)^2 \int_{\mathbb{R}_{>0}}  z^2 e^{-\eta(s) z} F_2(dz) - \int_{\mathbb{R}_{>0}}  e^{-\eta(s) z} F_2(dz)  \biggr\} ds + (\rho  + \frac{ ( \mu_0 -  r )^2  }{    \sigma_0 ^2})(T-t), \\
	\alpha(t) =& \frac{(\kappa_r - \kappa)\mu_1}{\delta + r} (e^{- \delta(T-t)} - e^{r(T-t)}), \\
	\beta(t) =& \frac{\rho (\kappa_r - \kappa)  (\iota_r + 1) \mu_1 \mu_2}{\delta + r} \bigg( \frac{1}{\delta}(1-e^{-\delta(T-t)})+\frac{1}{r}(1-e^{r(T-t)}) \bigg)  - \frac{\rho ( \iota_r - \iota) k\mu_2}{r} (1-e^{r(T-t)}),
	\end{cases}
	\end{align*}
	and $\phi(t) = \frac{ (\iota_r+1) \mu_2 -  \int_{\mathbb{R}_{>0}} z e^{-\eta(t) z} F_2(dz)}{\int_{\mathbb{R}_{>0}} z^2 e^{-\eta(t) z} F_2(dz)} $.
\end{lemma}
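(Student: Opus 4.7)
The plan is to solve the system of integro-differential equations \eqref{mmvSN.equation.function.value.coefficients} by exploiting its triangular structure: $G$ decouples completely, $H$ feeds into $I$, and $I$ feeds into $K$. At each stage I propose a separable ansatz whose form is dictated by the affine character of the $\lambda$-dynamics, then verify that the nonlinear $\widetilde{H}$-terms collapse nicely under the ansatz, leaving scalar linear ODEs that can be integrated in closed form.

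\textbf{Step 1 (solving $G$).} The equation $G_t + rG = 0$ with $G(T)=1$ is a linear ODE with immediate solution $G(t)=e^{r(T-t)}$.

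\textbf{Step 2 (solving $H$).} I propose the exponential-affine ansatz $H(t,\lambda) = \tfrac{1}{2\theta} e^{\eta(t)\lambda + \zeta(t)}$. Its payoff is the identity $H(t,\lambda+z)/H(t,\lambda) = e^{\eta(t)z}$, so that $H_z = H\,(e^{\eta z}-1)$ and
\[
\widetilde{H}(f) \;=\; \frac{1}{2H(t,\lambda)}\int_{\mathbb{R}_{>0}} f(z)\, z\, e^{-\eta(t)z} F_2(dz).
\]
Because $H(t,\lambda)$ factors out of every $\widetilde{H}$-integral, the quantities $\widetilde{H}(H_z)/\widetilde{H}(z)$, $\widetilde{H}(H_z^2/z)/H$, etc., become $\lambda$-free. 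Substituting the ansatz into the $H$-equation and dividing by $H$, I separate coefficients of $\lambda$ and of $1$. The $\lambda$-coefficient yields the linear ODE $\eta'(t)-\delta\eta(t)+\kappa_r^2\mu_1^2/\sigma_1^2=0$ with $\eta(T)=0$, whose integrating-factor solution is the stated expression for $\eta$. The constant term then gives a direct ODE for $\zeta(t)$; recognising that the nonlinear remainder rearranges as $\rho\{\phi(t)^2\!\int z^2 e^{-\eta z}F_2(dz) - \!\int e^{-\eta z}F_2(dz)\}$ with $\phi$ as defined, the explicit formula for $\zeta(t)$ follows by integration from $t$ to $T$ using $\zeta(T)=0$.

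\textbf{Step 3 (solving $I$).} I propose the linear ansatz $I(t,\lambda)=\alpha(t)\lambda+\beta(t)$, so that $I_z=\alpha(t)z$. A short computation gives $\widetilde{H}(I_z)=\alpha\widetilde{H}(z)$ and $\widetilde{H}(H_z I_z/z)=\alpha\widetilde{H}(H_z)$, from which the bracketed cross-term $2\widetilde{H}(H_z I_z/z)-2\widetilde{H}(H_z)\widetilde{H}(I_z)/\widetilde{H}(z)$ vanishes identically. What remains is linear in $(\alpha,\beta)$ with $G(t)=e^{r(T-t)}$ as source. Separating coefficients of $\lambda$ yields the first-order linear ODE $\alpha'(t)-\delta\alpha(t)=(\kappa_r-\kappa)\mu_1 e^{r(T-t)}$ with $\alpha(T)=0$, solved by the integrating factor $e^{-\delta t}$; the constant part produces an analogous inhomogeneous ODE for $\beta(t)$ with $\beta(T)=0$, which integrates directly once $\alpha$ is known.

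\textbf{Step 4 (solving $K$) and main obstacle.} With $I_z=\alpha(t)z$ the source term in the $K$-equation reduces to $\widetilde{H}(I_z^2/z)-\widetilde{H}(I_z)^2/\widetilde{H}(z)=\alpha^2\widetilde{H}(z)-\alpha^2\widetilde{H}(z)=0$, so $K\equiv 0$ with $K(T,\lambda)=0$ solves the equation trivially. The only delicate point is \emph{Step 2}: verifying that the two nonlinear terms in the $H$-equation decouple from $\lambda$ under the exponential-affine ansatz. Everything hinges on the factorisation $H(t,\lambda+z)=H(t,\lambda)\,e^{\eta(t)z}$, which sweeps the $\lambda$-dependence out of every $\widetilde{H}$-integral; once this observation is in hand, the problem reduces to scalar linear ODEs and the rest of the proof is mechanical.
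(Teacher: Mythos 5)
Your proposal is correct and follows essentially the same route as the paper: the same exponential--affine ansatz $H=\frac{1}{2\theta}e^{\eta\lambda+\zeta}$, the linear ansatz for $I$, separation of the resulting equation into scalar ODEs in $t$, and the observation that $I_z=\alpha(t)z$ makes the source term of the $K$-equation vanish so that $K\equiv 0$. The only slip is the claim that ${\bf \widetilde{H}}(H_z)/{\bf \widetilde{H}}(z)$ is $\lambda$-free --- it is in fact proportional to $H(t,\lambda)$ since ${\bf \widetilde{H}}(z)=\frac{1}{2H(t,\lambda)}\int z^2e^{-\eta z}F_2(dz)$ --- but because the entire nonlinear remainder of the $H$-equation carries exactly one factor of $H$, dividing through by $H$ as you do still yields the separation, so the argument goes through unchanged.
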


\par Based on the above analysis and preparation, we are able to solve the HJBI equation \eqref{mmvSN.hjbi.equation}. Hence, we can use the verification theorem to prove that the candidate function and the corresponding strategies are indeed the value function and the optimal strategies.
\begin{theorem} \label{mmvSN.theorem.solution.explicit}
	\par {\bf (Value function and optimal strategy)} Assume the initial values $X(s) = x_s$ and $Y(s) = y_s$. The value function for Problem \eqref{obj.3} is given by $W(t,x) \in C^{1,2}([s,T] \times \mathbb{R})$,
	\begin{align}
	\label{mmvSN.function.value} W(t,x;s,x_s,y_s) = -\frac{\theta(e^{r(T-t)}x+\alpha(t)\lambda(t) + \beta(t))^2}{4e^{\eta(t) \lambda(t) + \zeta(t)}} + \frac{\theta(e^{rT}x_s+\frac{2 y_s}{\theta}e^{\eta(s) \lambda(s) + \zeta(s)}+\alpha(s)\lambda(s) + \beta(s))^2}{4e^{\eta(t) \lambda(t) + \zeta(t)}},
	\end{align}
	and the equilibrium strategies are the precommitted feedback strategies
	\begin{align}
	\label{mmvSN.strategy.optimal.a} 
	\begin{cases}
	\pi^*(t,x;s,x_s,y_s) =& \frac{ \mu_0 -  r  }{ \sigma_0 ^2} \bigg( - x  + x_s e^{r(t-s)} +  \frac{y_s}{\theta} e^{\eta(t) \lambda(t) + \zeta(t)} e^{-r(T-t)}  \\
	&-  (\alpha(t) \lambda(t) + \beta(t) )e^{-r(T-t)}  + (\alpha(s) \lambda(s) + \beta(s) )e^{-r(T-t)} \bigg), \\
	u^*(t,x;s,x_s,y_s)=& \frac{\kappa_r \mu_1 }{ \sigma_1^2}  \bigg( - x  + x_s e^{r(t-s)} +  \frac{y_s}{\theta} e^{\eta(t) \lambda(t) + \zeta(t)} e^{-r(T-t)}  \\
	&-  (\alpha(t) \lambda(t) + \beta(t) )e^{-r(T-t)}  + (\alpha(s) \lambda(s) + \beta(s) )e^{-r(T-t)} \bigg), \\
	v^*(t,x;s,x_s,y_s) =& \frac{1}{ k } \phi(t)  \bigg( - x  + x_s e^{r(t-s)} +  \frac{y_s}{\theta} e^{\eta(t) \lambda(t) + \zeta(t)} e^{-r(T-t)}  \\
	&-  (\alpha(t) \lambda(t) + \beta(t) )e^{-r(T-t)}  + (\alpha(s) \lambda(s) + \beta(s) )e^{-r(T-t)} \bigg)  + \frac{1}{ k } \alpha(t)  e^{-r(T-t)},
	\end{cases}
	\end{align}
	and
	\begin{align}
	\label{mmvSN.strategy.optimal.b}
	\begin{cases}
	&o^*(t) = - \frac{ \mu_0 -  r }{  \sigma_0 }, \\
	&p^*(t,z)= \frac{\kappa_r \mu_1 z}{    \sigma_1^2}, \\
	&q^*(t,z)= e^{-\eta(t)z}+\phi(t)e^{-\eta(t)z}z-1.
	\end{cases}
	\end{align}
\end{theorem}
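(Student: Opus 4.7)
The plan is to assemble the theorem by combining the preparatory results rather than re-solving the HJBI system from scratch. Proposition \ref{mmvSN.lemma.solution.suppose} furnishes a quadratic-in-$y$ candidate $W = Gxy + Hy^2 + Iy + K$ together with feedback saddle-point strategies expressed through $G,H,I,K$ and the auxiliary process $Y$. Proposition \ref{mmvSN.proposition.optimal_function} removes the $Y(t)$-dependence by means of Lemma \ref{mmvSN.lemma.relationship_of_XY}, and Lemma \ref{mmvSN.lemma.coefficients} supplies closed forms for $G,H,I,K$. What remains is a substitution step followed by an application of the verification Theorem \ref{mmvSN.theorem.verification.0}.

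For the substitution, I would plug $G(t) = e^{r(T-t)}$, $H(t,\lambda) = \tfrac{1}{2\theta} e^{\eta(t)\lambda+\zeta(t)}$, $I(t,\lambda) = \alpha(t)\lambda + \beta(t)$ and $K\equiv 0$ directly into \eqref{mmvSN.value_function.s}. Using $2 H(s,\lambda(s)) y_s = \tfrac{y_s}{\theta} e^{\eta(s)\lambda(s)+\zeta(s)}$, $G(s)/G(t) = e^{r(t-s)}$ and $2H(s,\lambda(s))/G(t) = \tfrac{1}{\theta}e^{\eta(s)\lambda(s)+\zeta(s)} e^{-r(T-t)}$, formula \eqref{mmvSN.value_function.s} collapses to \eqref{mmvSN.function.value}, and the first two lines of \eqref{mmvSN.strategy.saddlepoint.equation.precommitted} collapse to the stated formulas for $\pi^*$ and $u^*$. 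For $v^*$, I need to check that $\tfrac{1}{2H(t,\lambda(t))}\bigl(\tfrac{\iota_r \mu_2}{\widetilde{H}(z)} + \tfrac{2 \widetilde{H}(H_z)}{\widetilde{H}(z)}\bigr)$ equals $\phi(t)$ and that $\tfrac{1}{kG(t)}\tfrac{\widetilde{H}(I_z)}{\widetilde{H}(z)}$ equals $\tfrac{\alpha(t)}{k} e^{-r(T-t)}$. Since the exponential form of $H$ gives $H_z = H(t,\lambda)(e^{\eta(t)z}-1)$, the functional $\widetilde{H}(f)$ reduces to a moment integral of $f(z) z e^{-\eta(t)z}$ against $F_2$, whereupon both simplifications follow by algebraic cancellation. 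Substituting the same closed forms into \eqref{mmvSN.strategy.saddlepoint.equation} likewise yields $b^*$ in \eqref{mmvSN.strategy.optimal.b}; in particular, the $Y$ factors cancel in $q^*$ so that $q^*(t,z) = e^{-\eta(t)z}(1+\phi(t)z) - 1$ becomes $x$- and $y$-independent, reflecting the affine $\lambda$-dependence of $I$.

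The main obstacle is the verification step, which splits into three items. First, the candidate must be $C^{1,2,2,2}$ and satisfy the HJBI system \eqref{mmvSN.hjbi.equation} with terminal data $W(T,x,y,\lambda) = xy + \tfrac{1}{2\theta}y^2$; this reduces to the boundary conditions $G(T)=1$, $H(T,\lambda)=\tfrac{1}{2\theta}$, $I(T,\lambda)=0$, $K\equiv 0$ together with the ODE system \eqref{mmvSN.equation.function.value.coefficients}, all of which hold for the closed forms of Lemma \ref{mmvSN.lemma.coefficients} by direct differentiation. Second, the saddle inequalities $\mathcal{L}_t^{a,b^*} W \le 0 \le \mathcal{L}_t^{a^*,b} W$ follow from the strict concavity of $\mathcal{L}_t^{\cdot,b^*}W$ in $a$ and convexity of $\mathcal{L}_t^{a^*,\cdot}W$ in $b$ built into the quadratic ansatz. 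Third, and most delicate, are the admissibility requirements in Definition \ref{mmvSN.definition.admissible}: the jump-sign constraint $\Delta L \ge -1$ for $b^*$ (equivalently $e^{-\eta(t)z}(1+\phi(t)z) \ge 0$), the integrability condition \eqref{mmvSN.condition.integrable.property}, and the non-negative square-integrable $P$-martingale property of $Y^*$. The sign condition on $1+\phi(t)z$ requires a careful discussion of the sign of $\phi(t)$ together with the support of $F_2$; once that is in hand, integrability follows from the boundedness of $\eta,\phi$ on $[0,T]$ and the standing finite-moment assumptions \eqref{mmvSN.moment}, after which a standard localisation via the stopping times $\zeta_n$ passes the Dynkin expansion of $W$ to the limit and closes the verification.
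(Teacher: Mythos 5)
Your proposal is correct and follows essentially the same route as the paper: the explicit formulas come from substituting Lemma \ref{mmvSN.lemma.coefficients} into Proposition \ref{mmvSN.proposition.optimal_function} (your algebra showing the $v^*$-coefficient equals $\phi(t)$ and $\widetilde{H}(I_z)/\widetilde{H}(z)=\alpha(t)$ is exactly the needed reduction), after which the theorem is established by checking admissibility of $(a^*,b^*)$ --- the paper invokes Kabanov's results to get a nonnegative square-integrable martingale $Y$ from the deterministic, bounded $b^*$ with $p^*,q^*\ge -1$, and deduces $a^*\in\mathcal{A}[s,T]$ from $\sup_t \mathbb{E}^P Y(t)^2<\infty$ --- followed by the It\^o--localization--HJBI verification argument, so your plan matches the paper's proof in structure and substance (and your insistence on actually verifying $1+\phi(t)z\ge 0$ is, if anything, more scrupulous than the paper, which asserts $q^*\ge-1$ without discussion). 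The one small discrepancy is the localizing sequence: the paper stops at $\tau_B$, the first time the integrands of the stochastic-integral terms in the It\^o expansion of $W$ exceed $B$, which is what actually turns those terms into true martingales, whereas the hitting times $\zeta_n$ of $Y$ at level $1/n$ that you cite are the paper's device for handling general adversary strategies in \eqref{mmvSN.condition.integrable.property} and would not by themselves control the integrands.
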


\subsection{MMV Efficient Frontier}

To simplify the notaitons, we introduce the following functions
\begin{align*}
\psi_1(s,t) :=& \frac{\kappa_r^2   \mu_1^2}{\delta \sigma_1^2}(e^{-\delta(t-s)}-e^{-\delta(T-s)}), \\
\psi_2(s,t) :=& \zeta(t) + \rho \int_s^t \int_{\mathbb{R}_{>0}}  e^{-\eta(u) z} (e^{\psi_1(u,t) z} - 1) (\phi(u) z + 1) F_2(dz) du, \\
\psi_3(s,t) :=&  \zeta(t) + \rho \int_s^t \int_{\mathbb{R}_{>0}}  e^{-\eta(u) z} (e^{\psi_1(u,t) z} - 1) (\phi(u) z + 1)^2 F_2(dz) du,
\end{align*}
and
\begin{align*}
C_1(s,t) :=& \frac{e^0_s e^3_{s,t} - (e^2_{s,t})^2}{(e^0_s-e^2_{s,t})^2}, \\
C_2(s,t) :=& \frac{e^0_s(e^3_{s,t}-e^2_{s,t})}{e^0_se^3_{s,t}-(e^2_{s,t})^2}  \frac{\alpha(t) \rho \iota_r \mu_2}{\delta} (1 - e^{-\delta(t-s)}) e^{-r(T-t)}, \\
C_3(s,t) :=& \frac{\alpha(t)^2 \rho \sigma_2^2}{2 \delta} (1 - e^{-2\delta(t-s)}) e^{-2r(T-t)} - \frac{(e^2_{s,t})^2}{e^0_s e^3_{s,t} - (e^2_{s,t})^2}  \frac{\alpha(t)^2 \rho^2 \iota_r^2 \mu_2^2}{\delta^2} (1 - e^{-\delta(t-s)})^2 e^{-2r(T-t)},
\end{align*}
where
$e^0_s := e^{\eta(s) \lambda + \zeta(s)}$, $e^2_{s,t} := e^{\psi_1(s,t) \lambda + \psi_2(s,t)}$, $e^3_{s,t} := e^{\psi_1(s,t) \lambda + \psi_3(s,t)}$. Now we state the efficient frontier for the MMV criterion problem, and show the results for some special cases.

\begin{theorem} \label{mmfSN.theorem.efficient_frontier}
	\par {\bf (Efficient Frontier)} For initial values $X^*(s) = x$, $Y^*(s) = y$ and $\lambda(s) = \lambda$, letting $\mathbb{V}ar_{s,x,y,\lambda}^P[\cdot]$ represents $\mathbb{V}ar^P[\cdot|X^a(s) = x,Y^{b}(s) = y,\lambda(s) = \lambda]$, then the expected wealth process under $Q^*$ is
	\begin{align*}
	\mathbb{E}_{s,x,y,\lambda}^{Q^*} X^*(t) =& x e^{r(t-s)}  - (\alpha(t) e^{-\delta(t-s)} - \alpha(s)) e^{-r(T-t)}  \lambda \\
	&- \alpha(t) \frac{\rho (\iota_r + 1 ) \mu_2}{\delta} (1 - e^{-\delta(t-s)}) e^{-r(T-t)} - \beta(t) e^{-r(T-t)} + \beta(s) e^{-r(T-t)}.
	\end{align*}
	Furthermore, the variance and expectation of the wealth process at time $t$ have the relationship:
	\begin{align}
	\label{mmvSN.equation.relationof.VarXandEX} \mathbb{V}ar_{s,x,y,\lambda}^PX^*(t) = C_1(s,t)  \bigg( \mathbb{E}_{s,x,y,\lambda}^P X^*(t) - \mathbb{E}_{s,x,y,\lambda}^{Q^*} X^*(t) - C_2(s,t) \bigg)^2 +  C_3(s,t).
	\end{align}
\end{theorem}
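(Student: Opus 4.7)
The plan is to combine the explicit representation of $X^*(t)$ afforded by Lemmas \ref{mmvSN.lemma.relationship_of_XY}--\ref{mmvSN.lemma.coefficients} with direct moment computations under $P$ and $Q^*$. Plugging $G(t), H(t,\lambda), I(t,\lambda)$ into the identity of Lemma \ref{mmvSN.lemma.relationship_of_XY} yields
\begin{align*}
X^*(t) = A(s,t,x,y,\lambda) - e^{-r(T-t)}\alpha(t)\lambda(t) - \tfrac{1}{\theta}e^{-r(T-t)}Y^*(t)\tilde{e}(t), \qquad \tilde{e}(t):=e^{\eta(t)\lambda(t)+\zeta(t)},
\end{align*}
with $A(s,t,x,y,\lambda)$ deterministic, so all randomness of $X^*(t)$ is carried by $\lambda(t)$ and $Y^*(t)\tilde{e}(t)$.

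To obtain $\mathbb{E}^{Q^*}X^*(t)$, I substitute $\pi^*,u^*,v^*$ into the SDE of $X^*$ and convert the $P$-martingale terms into $Q^*$-martingale terms via the compensator corrections read off from $o^*,p^*,q^*$ in Theorem \ref{mmvSN.theorem.solution.explicit}. Thanks to the saddle-point identities, each control-dependent drift term in the $P$-drift of $X^*$ is annihilated by the corresponding Brownian/compensator correction; the structural identity $(1+q^*(t,z))e^{\eta(t)z}=1+\phi(t)z$ simultaneously produces the shifted $Q^*$-drift $[-\delta\lambda(t)+\rho(\iota_r+1)\mu_2]dt$ for $\lambda$. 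The $Q^*$-drift of $X^*$ then collapses to $[rX^*(t)+(\kappa-\kappa_r)\mu_1\lambda(t)+(\iota-\iota_r)k\mu_2\rho]dt$; a decoupled linear ODE for $\mathbb{E}^{Q^*}\lambda(t)$ followed by a linear ODE for $\mathbb{E}^{Q^*}X^*(t)$ (with initial condition $x$), combined with the closed forms of $\alpha,\beta$ from Lemma \ref{mmvSN.lemma.coefficients}, reproduces the stated expression.

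For the variance, the representation reduces $\mathbb{V}ar^P X^*(t)$ to a linear combination of $\mathbb{V}ar^P\lambda(t)$, $\mathbb{V}ar^P[Y^*(t)\tilde{e}(t)]$ and $\mathrm{Cov}^P(\lambda(t), Y^*(t)\tilde{e}(t))$. The first is the standard shot-noise variance; for the latter two I set up the backward PIDEs satisfied by $\mathbb{E}^P[F(Y^*(t),\lambda(t))|Y^*(u)=y,\lambda(u)=\lambda]$ for $F\in\{Y^*\tilde{e},\,(Y^*\tilde{e})^2,\,\lambda Y^*\tilde{e}\}$, make an exponential ansatz $y^k e^{A(u)\lambda+B(u)}$, and derive decoupled linear ODEs for $(A,B)$. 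The structural identity turns the $N_2$-jump contribution to $d(Y^*\tilde{e})$ into simply $Y^*_-\tilde{e}_-\phi(t)V$, and the resulting ODEs close in closed form, giving $\mathbb{E}^P[Y^*(t)\tilde{e}(t)|\cdot]/y = e^2_{s,t}$ and $\mathbb{E}^P[(Y^*(t)\tilde{e}(t))^2|\cdot]/y^2 = e^0_s\cdot e^3_{s,t}$, with an analogous identification for the cross-moment. The same identity also forces $\mathbb{E}^{Q^*}[Y^*(t)\tilde{e}(t)] = ye^0_s$, so that $\mathbb{E}^P X^*(t)-\mathbb{E}^{Q^*}X^*(t)$ contains the factor $(e^0_s-e^2_{s,t})$; assembling the moments and rearranging extracts the quadratic frontier with the stated $C_1,C_2,C_3$. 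The main obstacle is precisely this moment computation: $Y^*$ and $\lambda$ jump together at $N_2$, and without the structural identity (inherited from the saddle-point equations of Proposition \ref{mmvSN.lemma.solution.suppose}) the ODEs for $(A,B)$ would not close into clean exponentials.
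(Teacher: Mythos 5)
Your proposal is correct and follows essentially the same route as the paper's proof: reduce $X^*(t)$ via Lemma \ref{mmvSN.lemma.relationship_of_XY} and Lemma \ref{mmvSN.lemma.coefficients} to the pair $\lambda(t)$ and $Y^*(t)H(t,\lambda(t))$, compute the first and second $P$-moments of these quantities (your backward PIDEs with exponential ansatz are exactly the Feynman--Kac equations the paper arrives at after its auxiliary measure changes $P_1,P_2$, and your structural identity $(1+q^*(t,z))e^{\eta(t)z}=1+\phi(t)z$ is the same fact that makes the paper's $N_2$-jump terms close), and obtain the $Q^*$-expectation from the $Q^*$-dynamics induced by $(o^*,p^*,q^*)$. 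The only cosmetic differences are that the paper organizes the $P$-moment computations through explicit changes of measure rather than a direct ansatz, uses the $Q^*$-martingale property of $Y^*H$ together with the relationship lemma instead of your drift-cancellation ODE for $\mathbb{E}^{Q^*}X^*(t)$, and for the cross-moment $\mathbb{E}^P[\lambda(t) Y^*(t)\tilde e(t)]$ the ansatz must be affine-in-$\lambda$ times an exponential (as in the paper's factorization $L^{\lambda YH}=L^{YH}\widetilde L$) rather than the purely exponential form $y^k e^{A(u)\lambda+B(u)}$ you state.
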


\begin{corollary}
	The efficient frontier for the terminal wealth process is given by
	\begin{align*}
	\mathbb{V}ar_{s,x,y,\lambda}^PX^*(T) = \frac{1}{e^{\eta(s) \lambda + \zeta(s)}-1}  \bigg( \mathbb{E}_{s,x,y,\lambda}^P X^*(T) - \mathbb{E}_{s,x,y,\lambda}^{Q^*} X^*(T)  \bigg)^2.
	\end{align*}
\end{corollary}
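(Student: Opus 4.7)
The corollary is a direct specialization of Theorem \ref{mmfSN.theorem.efficient_frontier} at $t=T$, so the plan is to evaluate all the coefficient functions $C_1(s,T)$, $C_2(s,T)$, $C_3(s,T)$ at the terminal time and simplify. First I would collect the boundary values: from the explicit expressions in Lemma \ref{mmvSN.lemma.coefficients}, $\eta(T)=0$, $\zeta(T)=0$, $\alpha(T)=0$, and $\beta(T)=0$. These follow immediately from the factors $1-e^{-\delta(T-t)}$, $T-t$, $e^{-\delta(T-t)}-e^{r(T-t)}$, etc., vanishing at $t=T$.

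Next I would examine $\psi_1(u,T)$. By the definition $\psi_1(s,t)=\frac{\kappa_r^2\mu_1^2}{\delta\sigma_1^2}\bigl(e^{-\delta(t-s)}-e^{-\delta(T-s)}\bigr)$, substituting $s\mapsto u$ and $t=T$ gives $\psi_1(u,T)=\frac{\kappa_r^2\mu_1^2}{\delta\sigma_1^2}(e^{-\delta(T-u)}-e^{-\delta(T-u)})=0$ for every $u\in[s,T]$. Consequently $e^{\psi_1(u,T)z}-1\equiv 0$, so the integrands in both $\psi_2(s,T)$ and $\psi_3(s,T)$ vanish. Combined with $\zeta(T)=0$, this gives $\psi_2(s,T)=\psi_3(s,T)=0$, and hence
\begin{align*}
e^2_{s,T}=e^{\psi_1(s,T)\lambda+\psi_2(s,T)}=1,\qquad e^3_{s,T}=e^{\psi_1(s,T)\lambda+\psi_3(s,T)}=1,
\end{align*}
while $e^0_s=e^{\eta(s)\lambda+\zeta(s)}$ is unchanged.

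Plugging these into the definitions of $C_1,C_2,C_3$, the coefficient $C_1(s,T)$ collapses to
\begin{align*}
C_1(s,T)=\frac{e^0_s\cdot 1-1^2}{(e^0_s-1)^2}=\frac{1}{e^{\eta(s)\lambda+\zeta(s)}-1},
\end{align*}
and both $C_2(s,T)$ and $C_3(s,T)$ vanish because every summand in their definitions carries a factor of $\alpha(T)$ (first power for $C_2$, squared for $C_3$), and $\alpha(T)=0$. Substituting these three values into \eqref{mmvSN.equation.relationof.VarXandEX} yields the claimed identity.

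There is no real obstacle here beyond careful bookkeeping; the only point that deserves a sentence of justification is the observation that $\psi_1(u,T)\equiv 0$ on the whole integration range $[s,T]$, which is what makes the integrals defining $\psi_2(s,T)$ and $\psi_3(s,T)$ collapse (rather than merely the boundary term $\zeta(T)$). With that remark the conclusion is mechanical.
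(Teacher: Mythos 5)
Your proposal is correct and follows exactly the paper's route: the paper's own proof is the single line ``let $t=T$ in \eqref{mmvSN.equation.relationof.VarXandEX}'', and you have simply carried out the bookkeeping it leaves implicit ($\eta(T)=\zeta(T)=\alpha(T)=\beta(T)=0$, $\psi_1(u,T)\equiv 0$ forcing $\psi_2(s,T)=\psi_3(s,T)=0$, hence $e^2_{s,T}=e^3_{s,T}=1$, $C_1(s,T)=(e^0_s-1)^{-1}$, $C_2(s,T)=C_3(s,T)=0$). All of these evaluations check out, so nothing further is needed.
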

\begin{proof}
	\par Let $t = T$ in \eqref{mmvSN.equation.relationof.VarXandEX}.
\end{proof}

\begin{corollary}
	\par If there is no catastrophe in the model, i.e., $\lambda(t)$ is a constant $\lambda$ and $\delta = \rho = 0$, then
	\begin{align*}
	\mathbb{V}ar_{s,x,y,\lambda}^PX^*(t) = \frac{1 }{e^{\big(\frac{\lambda \kappa_r^2   \mu_1^2}{\sigma_1^2} + \frac{(\mu_0-r)^2}{\sigma_0^2}\big)(t-s) }-1 }  \bigg( \mathbb{E}_{s,x,y,\lambda}^P X^*(t) - \mathbb{E}_{s,x,y,\lambda}^{Q^*} X^*(t) \bigg)^2.
	\end{align*}
\end{corollary}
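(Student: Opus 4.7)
The plan is to derive this corollary as a direct specialization of Theorem~\ref{mmfSN.theorem.efficient_frontier} at $\delta=\rho=0$. Setting $\rho=0$ makes $N_2$ identically zero, so the SDE \eqref{mmvSN.dynamic.lambda} reduces to $d\lambda(t)=0$ and $\lambda(t)\equiv\lambda$, confirming the hypothesis that $\lambda(t)$ is constant. In this regime, the identity \eqref{mmvSN.equation.relationof.VarXandEX} has coefficients $C_1,C_2,C_3$ that must be evaluated explicitly, so the whole task reduces to computing their limits as $\delta,\rho\to 0$, interpreting the $1/\delta$ factors in $\eta$ and $\psi_1$ by continuity.

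First I would dispatch $C_2$ and $C_3$. Fixing $\rho=0$ turns the $\rho$-integrals inside $\zeta,\psi_2,\psi_3$ into zero, so $\psi_2(s,t)=\psi_3(s,t)=\zeta(t)$ and thus $e^2_{s,t}=e^3_{s,t}$. Since $C_2(s,t)$ carries a factor $e^3_{s,t}-e^2_{s,t}$ in its numerator and a denominator $e^0_s e^3_{s,t}-(e^2_{s,t})^2=e^2_{s,t}(e^0_s-e^2_{s,t})$ that is nonzero for $t>s$, we obtain $C_2(s,t)=0$ at once. Both summands of $C_3(s,t)$ carry an overall factor of $\rho$ (after resolving $1/\delta$ via $(1-e^{-2\delta(t-s)})/\delta\to 2(t-s)$ and $(1-e^{-\delta(t-s)})^2/\delta^2\to (t-s)^2$), so $C_3(s,t)=0$ as well.

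Next I would handle $C_1$. The equality $e^3_{s,t}=e^2_{s,t}$ factors the numerator of $C_1(s,t)=(e^0_s e^3_{s,t}-(e^2_{s,t})^2)/(e^0_s-e^2_{s,t})^2$ as $e^2_{s,t}(e^0_s-e^2_{s,t})$, so $C_1(s,t)=e^2_{s,t}/(e^0_s-e^2_{s,t})=1/(e^0_s/e^2_{s,t}-1)$. Taking $\delta\to 0$ in Lemma~\ref{mmvSN.lemma.coefficients} via $1-e^{-\delta(T-t)}=\delta(T-t)+O(\delta^2)$ gives $\eta(t)\to(\kappa_r^2\mu_1^2/\sigma_1^2)(T-t)$ and in parallel $\psi_1(s,t)\to(\kappa_r^2\mu_1^2/\sigma_1^2)(T-t)$; also $\zeta(t)\to((\mu_0-r)^2/\sigma_0^2)(T-t)$ and $\psi_2(s,t)\to\zeta(t)$. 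Hence $\eta(s)-\psi_1(s,t)\to(\kappa_r^2\mu_1^2/\sigma_1^2)(t-s)$ and $\zeta(s)-\psi_2(s,t)\to((\mu_0-r)^2/\sigma_0^2)(t-s)$, which delivers $e^0_s/e^2_{s,t}\to\exp\bigl((\lambda\kappa_r^2\mu_1^2/\sigma_1^2+(\mu_0-r)^2/\sigma_0^2)(t-s)\bigr)$, exactly the exponential appearing in the corollary.

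The main obstacle is purely bookkeeping: several ingredients in $C_1,C_2,C_3$ sit in apparent $0/0$ forms at $\delta=0$, and $C_1$ has a further $0/0$ structure at $\rho=0$ because its numerator vanishes while its denominator is a square. The factorization $e^0_s e^2_{s,t}-(e^2_{s,t})^2=e^2_{s,t}(e^0_s-e^2_{s,t})$ exploits this coincidence to cancel one factor of $(e^0_s-e^2_{s,t})$ and yield a finite, nonvanishing limit. Once these reductions are carried out, \eqref{mmvSN.equation.relationof.VarXandEX} collapses to the announced identity.
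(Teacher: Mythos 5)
Your proposal is correct and follows essentially the same route as the paper: specialize the efficient-frontier identity \eqref{mmvSN.equation.relationof.VarXandEX}, observe that $\rho=0$ forces $\psi_2=\psi_3=\zeta$ and hence $e^2_{s,t}=e^3_{s,t}$ (killing $C_2$ and $C_3$), resolve the $1/\delta$ factors by the limit $(1-e^{-\delta\tau})/\delta\to\tau$, and reduce $C_1$ to $1/(e^0_s/e^2_{s,t}-1)$ with the announced exponent. The only quibble is your aside that the numerator of $C_1$ "vanishes" at $\rho=0$ — it does not for $t>s$ (it equals $e^2_{s,t}(e^0_s-e^2_{s,t})\neq 0$), but this does not affect your computation, which matches the paper's.
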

\begin{proof}
	\par Let $\delta = \rho = 0$ in \eqref{mmvSN.equation.relationof.VarXandEX}, we have
	\begin{align*}
	&\eta(s) = \frac{\kappa_r^2   \mu_1^2}{\sigma_1^2}(T-s),  \quad  \psi_1(s,t) = \frac{\kappa_r^2   \mu_1^2}{\sigma_1^2}(T-t), \\
	&\zeta(s) = \frac{(\mu_0-r)^2}{\sigma_0^2}(T-s), \quad \psi_2(s,t) = \psi_3(s,t) = \frac{(\mu_0-r)^2}{\sigma_0^2}(T-t), \\
	&e^0_s = e^{\big(\frac{\lambda \kappa_r^2   \mu_1^2}{\sigma_1^2} + \frac{(\mu_0-r)^2}{\sigma_0^2}\big)(T-s) }, \quad e^2_{s,t} = e^3_{s,t} = e^{\big(\frac{\lambda \kappa_r^2   \mu_1^2}{\sigma_1^2} + \frac{(\mu_0-r)^2}{\sigma_0^2}\big)(T-t) }.
	\end{align*}	
	Therefore,
	\begin{align*}
	C_1(s,t) = \frac{1 }{e^{\big(\frac{\lambda \kappa_r^2   \mu_1^2}{\sigma_1^2} + \frac{(\mu_0-r)^2}{\sigma_0^2}\big)(t-s) }-1 }, \quad C_2(s,t) = C_3(s,t) = 0.
	\end{align*}

\end{proof}

\section{Optimization Problem for the Diffusion Approximation Model} \label{mmvSN.section.approximation}

In this section, we consider the optimization problem under MMV for the diffusion approximation model. We take a parallel procedure as that of Section 2 and Section 3. In Subsection 4.1, we introduce the diffusion approximation model for the catastrophe insurance. In Subsection 4.2, we present the auxiliary problem, and by applying the dynamic programming principle, we gives the corresponding HJB equation. In Subsection 4.3, the optimal strategies and value function are obtained explicitly by solving the HJBI equation.

\subsection{Diffusion Approximation of the Catastrophe Insurance}

In \cite{dassios2005kalman-bucy}, the diffusion approximation of the Cox-process driven by shot noise intensity is obtained. By Theorem 2 of \cite{dassios2005kalman-bucy}, the diffusion approximation of the aggregate claims process and the intensity process are given by
\begin{align*}
dC_1(t) = \mu_1 \lambda(t) dt + \sigma_1\sqrt{\frac{ \rho \mu_2}{\delta}} dW_1(t),
\end{align*}
\begin{align*}
d C_2(t) =  k \rho \mu_2 dt + k \sigma_2\sqrt{\rho} dW_2(t),
\end{align*}
and
\begin{align*}
d \lambda(t) = (-\delta \lambda(t) + \rho \mu_2) dt + \sigma_2\sqrt{ \rho} dW_2(t).
\end{align*}
where $\mu_1$, $\sigma_1^2$, $\mu_2$, and $\sigma_2^2$ are defined by \eqref{mmvSN.moment}, $W_1(t)$ and $W_2(t)$ are two independent standard Brownian motions under probability $P$.
\par In this section, we consider the MMV optimization problem under this diffusion approximation. Similarly, the surplus process is given by
\begin{align*}
dR(t) =& \bigg(\kappa_r u(t) - (\kappa_r - \kappa)\bigg)\mu_1 \lambda(t) dt -  u(t) \sigma_1\sqrt{\frac{ \rho \mu_2}{\delta}} dW_1(t) \\
&+ \bigg(\iota_r v(t) - (\iota_r - \iota)\bigg) k \mu_2 \rho dt -  k v(t) \sigma_2\sqrt{ \rho} dW_2(t).
\end{align*}
Then the insurer's surplus process after investment is given by
\begin{align*}
dX(t) =  \bigg( r X(t)+\pi(t) (\mu_0 - r)  + (\kappa_r u(t) - \kappa_r + \kappa)\mu_1\lambda(t)  + (\iota_r v(t) - \iota_r + \iota) k \mu_2\rho\bigg) dt \\
+ \pi(t)  \sigma_0  dW_0(t)  -  u(t) \sigma_1\sqrt{\frac{ \rho \mu_2}{\delta}} dW_1(t) -  k v(t) \sigma_2\sqrt{ \rho} dW_2(t).
\end{align*}

\subsection{Hamilton-Jacobi-Bellman Equation}


For the diffusion approximation model, the corresponding characteristic process $Y(t)$ is given by
\begin{equation*}
\label{mmvSN.diffusion.Q.representation}
Y(t) = y + \int_s^t  Y(r) o(r) dW_0(r) + \int_s^t  Y(r) p(r) dW_1(r) + \int_s^t \ Y(r) q(r) dW_2(r), \quad \forall t \in [0,T].
\end{equation*}
The insurer aims to find the optimal strategies $a^* = (\pi^*,u^*,v^*)$ and $b^* = (o^*,p^*,q^*)$ such that for any admissible $(a,b)$ the objective function $J^{a,b}(s,x,y,\lambda)$ is maximized by $a^*$ and is minimized by $b^*$, where
\begin{equation}
\label{mmvSN.diffusion.obj}
J^{a,b}(s,x,y,\lambda) = \mathbb{E}_{s,x,y,\lambda}^P \bigg[X^{a}(T)Y^{b}(T)+\frac{1}{2\theta}(Y^{b}(T))^2 \bigg].
\end{equation}
Recall that the processes $X(t)$, $Y(t)$ and $\lambda(t)$ satisfy the following SDE:

\begin{subequations}
	\begin{empheq} [left=\empheqlbrace] {align}
	\nonumber dX(t) =&  \bigg( r X(t)+\pi(t) (\mu_0 - r)  + (\kappa_r u(t) - \kappa_r + \kappa)\mu_1\lambda(t)  + (\iota_r v(t) - \iota_r + \iota) k \mu_2\rho\bigg) dt \\
	\label{mmvSN.diffusion.dynamic.x} &+ \pi(t)  \sigma_0  dW_0(t)  -  u(t) \sigma_1\sqrt{\frac{ \rho \mu_2}{\delta}} dW_1(t) -  k v(t) \sigma_2\sqrt{ \rho} dW_2(t), \\
	\label{mmvSN.diffusion.dynamic.y} dY(t) =& Y(t) o(t) dW_0(t) + Y(t) p(t) dW_1(t) +  Y(t) q(t) dW_2(t), \\
	\label{mmvSN.diffusion.dynamic.lambda} d\lambda(t) =& (-\delta \lambda(t) + \rho \mu_2 ) dt + \sigma_2\sqrt{ \rho} dW_2(t).
	\end{empheq}
\end{subequations}

\begin{definition} \label{mmvSN.diffusion.definition.admissible}
	\par {\bf (Admissible Strategies)} The strategy $\{a(t)\}_{0\leq t\leq T} = \{(\pi(t),u(t),v(t))\}_{0\leq t\leq T}$ for player one is admissible in Problem \eqref{obj.3} if $\pi: [s,T] \to \mathbb{R}$, $u: [s,T] \to \mathbb{R}$, and $v: [s,T] \to \mathbb\mathbb{R}$ are $\mathbb{F}$-adapted processes that ensure the well-definedness of \eqref{mmvSN.diffusion.dynamic.x} and satisfy $\mathbb{E}^PX^u(t) < \infty$. We denote the set of all such admissible strategies as $\mathcal{A}[s,T]$.
	\par The strategy $\{b(t)\}_{0\leq t\leq T} = \{(o(t),p(t),q(t))\}_{0\leq t\leq T}$ for player two is admissible in Problem \eqref{obj.3} if $o: [s,T] \to \mathbb{R}$, $p: [s,T] \to \mathbb{R}$, and $q: [s,T] \to \mathbb{R}$ are $\mathbb{F}$-adapted processes such that the SDE \eqref{mmvSN.diffusion.dynamic.y} admits a unique solution, which is a nonnegative $\mathbb{F}$-adapted square integrable $P$-martingale, and it satisfies $\mathbb{E}^PY(t) = 1$ for $t \in [s,T]$. We denote the set of all such admissible strategies as $\mathcal{B}[s,T]$.
\end{definition}

We present the following verification theorem for the diffusion approximation case, the proof of which is similar to Theorem \ref{mmvSN.theorem.verification.0}.

\begin{theorem}
	{\bf (Verification Theorem)} Suppose that $W: (s,x,y,\lambda) \mapsto[s,T] \times \mathbb{R} \times \mathbb{R}_{\geq 0} \times \mathbb{R}_{>0}$ is a $C^{1,2,2,2}$ function satisfying the following condition
	\begin{align}
	\label{mmvSN.diffusion.hjbi.equation}
	\begin{cases}
	&\mathcal{L}_t^{a^*,b^*}W(t,x,y,\lambda) = 0, \qquad \forall (t,x,y,\lambda) \in [s,T) \times \mathbb{R} \times \mathbb{R}_{\geq 0} \times \mathbb{R}_{>0}, \\
	&\mathcal{L}_t^{a^*,b}W(t,x,y,\lambda) \geq 0, \qquad \forall b \in \mathcal{B}[s,T], \qquad \forall (t,x,y,\lambda) \in [s,T) \times \mathbb{R} \times \mathbb{R}_{\geq 0} \times \mathbb{R}_{>0}, \\
	&\mathcal{L}_t^{a,b^*}W(t,x,y,\lambda) \leq 0, \qquad \forall a \in \mathcal{A}[s,T] , \qquad \forall (t,x,y,\lambda) \in [s,T) \times \mathbb{R} \times \mathbb{R}_{\geq 0} \times \mathbb{R}_{>0}, \\
	&W(T,x,y,\lambda) = xy + \frac{1}{2 \theta} y^2,
	\end{cases}
	\end{align}
	where $\mathcal{L}_t^{a,b}$ is the infinitesimal generator of $(X(t),Y(t),\lambda(t))$ given by
	\begin{align}
	\nonumber \mathcal{L}_t^{a,b} W(t,x,y,\lambda) =& W_t +\biggl\{ r x+\pi (\mu_0 - r)  + (\kappa_r u - \kappa_r + \kappa)\mu_1\lambda  + (\iota_r v - \iota_r + \iota) k \mu_2\rho \biggr\} W_x \\
	\nonumber &+(-\delta \lambda + \rho \mu_2) W_{\lambda} + ( \frac{1}{2} \pi^2  \sigma_0^2 + \frac{1}{2} u^2 \sigma_1^2 \frac{ \rho \mu_2}{\delta} +  \frac{1}{2} k^2 v^2 \rho \sigma_2^2 ) W_{xx} \\
	\label{mmvSN.diffusion.generator} &+ \frac{1}{2} \rho \sigma_2^2 W_{\lambda \lambda} + (\frac{1}{2} y^2 o^2 + \frac{1}{2} y^2 p^2 + \frac{1}{2} y^2 q^2) W_{yy} \\
	\nonumber &+ (\pi \sigma_0 y o - u \sigma_1\sqrt{\frac{ \rho \mu_2}{\delta}} y p -  k v \sigma_2\sqrt{ \rho} y q ) W_{xy} - k v \rho \sigma_2^2 W_{x \lambda} + \sigma_2\sqrt{ \rho} y q W_{y \lambda}.
	\end{align}
	Then, 
	\begin{align*}
	W(s,x,y,\lambda) = \sup_{a \in \mathcal{A}[s,T]} \inf_{b \in \mathcal{B}[s,T]} J^{a,b}(s,x,y,\lambda).
	\end{align*}
\end{theorem}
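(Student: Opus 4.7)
The proof follows the standard verification pattern for zero-sum stochastic differential games under the HJBI formulation, so the plan is to apply It\^o's formula to $W(t,X(t),Y(t),\lambda(t))$ on $[s,T]$, exploit the three HJBI inequalities to sandwich $W$ between $\sup_a J^{a,b^*}$ and $\inf_b J^{a^*,b}$, and then close the loop using the terminal condition.

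First I would compute, for any admissible pair $(a,b)$, the differential
\begin{align*}
dW(t,X(t),Y(t),\lambda(t)) = \mathcal{L}_t^{a,b}W(t,X(t),Y(t),\lambda(t))\,dt + dM_t^{a,b},
\end{align*}
where $M^{a,b}$ is the local martingale collecting the $dW_0$, $dW_1$, $dW_2$ contributions, whose integrands are read off directly from the diffusion coefficients in \eqref{mmvSN.diffusion.dynamic.x}--\eqref{mmvSN.diffusion.dynamic.lambda} combined with $W_x$, $W_y$, $W_\lambda$. Using a localizing sequence of stopping times $\tau_n\uparrow T$ that tames the stochastic integrands, I would integrate from $s$ to $T\wedge\tau_n$, take $P$-expectations to kill $M^{a,b}$, and then send $n\to\infty$. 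The square-integrability of $Y$ and the moment bound $\mathbb{E}^P X^{a}(t)<\infty$ built into Definition \ref{mmvSN.diffusion.definition.admissible}, together with the polynomial (quadratic) growth expected for the value function, are what make the dominated convergence step go through.

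Next I would specialize the three HJBI conditions. Fixing $a=a^*$ and taking any $b\in\mathcal{B}[s,T]$, condition $\mathcal{L}^{a^*,b}W\ge 0$ renders $W(t,X^{a^*}(t),Y^b(t),\lambda(t))$ a submartingale on $[s,T]$, so
\begin{align*}
W(s,x,y,\lambda) \le \mathbb{E}_{s,x,y,\lambda}^P\!\left[W(T,X^{a^*}(T),Y^b(T),\lambda(T))\right] = J^{a^*,b}(s,x,y,\lambda),
\end{align*}
where the last equality uses the terminal condition $W(T,x,y,\lambda)=xy+\frac{1}{2\theta}y^2$. Taking the infimum over $b$ gives $W\le\inf_{b}J^{a^*,b}\le\sup_{a}\inf_{b}J^{a,b}$. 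Symmetrically, fixing $b=b^*$ and using $\mathcal{L}^{a,b^*}W\le 0$ makes the process a supermartingale, yielding $W\ge J^{a,b^*}$ for every $a$, hence $W\ge\sup_{a}J^{a,b^*}\ge\inf_{b}\sup_{a}J^{a,b}$. Since $\sup_{a}\inf_{b}J^{a,b}\le\inf_{b}\sup_{a}J^{a,b}$ holds trivially, combining these two bounds forces equality throughout and gives $W(s,x,y,\lambda)=\sup_{a}\inf_{b}J^{a,b}(s,x,y,\lambda)$. The condition $\mathcal{L}^{a^*,b^*}W=0$ simultaneously certifies that $(a^*,b^*)$ is a saddle point attaining $W$.

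The main obstacle is the integrability justification for removing the localizing stopping times and for interchanging expectation with limit. Verifying $\mathbb{E}[M_T^{a,b}-M_s^{a,b}]=0$ rather than merely $\mathbb{E}[M_{T\wedge\tau_n}^{a,b}-M_s^{a,b}]=0$ requires uniform $L^2$-control on the integrands $\pi\sigma_0 W_x+yoW_y$, $-u\sigma_1\sqrt{\rho\mu_2/\delta}\,W_x+ypW_y$, and $-kv\sigma_2\sqrt{\rho}\,W_x+yqW_y+\sigma_2\sqrt{\rho}\,W_\lambda$. The admissibility conditions $\mathbb{E}^P X^a(t)<\infty$ and the requirement that $Y$ be a nonnegative square-integrable $P$-martingale are precisely what make this possible, though the argument must be carried out carefully because a quadratic $W$ interacts nonlinearly with the state. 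Everything else --- It\^o's formula, the direction of the three inequalities, and the matching at $T$ --- is bookkeeping once this integrability has been secured. For this reason I would follow the templates of Theorem 2.2.2 in \cite{yeung2006cooperative} and Theorem 3.2 in \cite{mataramvura2008risk}, both of which supply the technical framework in a closely related setting.
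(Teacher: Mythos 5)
Your proposal is correct and takes essentially the same route as the paper, which does not write out a separate proof for this theorem but states that it is similar to Theorem \ref{mmvSN.theorem.verification.0} and defers to the cited templates; the It\^o-plus-localization argument with the three HJBI inequalities yielding the sub/supermartingale sandwich is exactly the scheme the authors carry out explicitly in the proof of Theorem \ref{mmvSN.theorem.solution.explicit}. Your identification of the removal of the localizing sequence as the only genuinely delicate step also matches the paper's treatment.
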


\subsection{Solutions to the Diffusion Approximation Model}

We take a similar procedure as that of Section \ref{mmvSN.section.main_result} in this subsection. The explicit form of value functions and optimal strategies for the diffusion approximation model are also obtained. All the proofs in this section are delegated in Appendix B.

\begin{theorem} \label{mmvSN.diffusion.main_result}
	\par Assume the initial values $X(0) = x_0$, $Y(0) = 1$ and $\lambda(0) = \lambda_0$. The value function for Problem \eqref{obj.0} is given by
	\begin{align}
	W(t,x) = -\frac{\theta(e^{r(T-t)}x+\alpha(t)\lambda(t) + \beta(t))^2}{4e^{\xi(t)\lambda(t)^2 + \eta(t) \lambda(t) + \zeta(t)}} + \frac{\theta(e^{rT}x_0+\frac{2}{\theta}e^{\xi(0)\lambda_0^2 + \eta(0) \lambda_0 + \zeta(0)}+\alpha(0)\lambda_0 + \beta(0))^2}{4e^{\xi(t)\lambda(t)^2 + \eta(t) \lambda(t) + \zeta(t)}},
	\end{align}
	and the optimal feedback strategies are given as follows
	\begin{align*}
	\begin{cases}
	\pi^*(t,x) =& \frac{ \mu_0 -  r  }{ \sigma_0 ^2} \bigg( - x  + x_0 e^{rt} +  \frac{1}{\theta} e^{\xi(t)\lambda(t)^2 + \eta(t) \lambda(t) + \zeta(t)} e^{-r(T-t)}  \\
	&-  (\alpha(t) \lambda(t) + \beta(t) )e^{-r(T-t)}  + (\alpha(0) \lambda_0 + \beta(0) )e^{-r(T-t)} \bigg), \\
	u^*(t,x)=& \frac{\kappa_r \mu_1 }{ \sigma_1^2}  \bigg( - x  + x_0 e^{rt} +  \frac{1}{\theta} e^{\xi(t)\lambda(t)^2 + \eta(t) \lambda(t) + \zeta(t)} e^{-r(T-t)}  \\
	&-  (\alpha(t) \lambda(t) + \beta(t) )e^{-r(T-t)}  + (\alpha(0) \lambda_0 + \beta(0) )e^{-r(T-t)} \bigg), \\
	v^*(t,x) =& \frac{1}{ k } (\frac{\iota_r \mu_2}{\sigma_2^2} + 2\xi(t)\lambda(t)+\eta(t))  \bigg( - x  + x_0 e^{rt} +  \frac{1}{\theta} e^{\xi(t)\lambda(t)^2 + \eta(t) \lambda(t) + \zeta(t)} e^{-r(T-t)}  \\
	&-  (\alpha(t) \lambda(t) + \beta(t) )e^{-r(T-t)}  + (\alpha(0) \lambda_0 + \beta(0) )e^{-r(T-t)} \bigg)  + \frac{\alpha(t)}{k}   e^{-r(T-t)},
	\end{cases}
	\end{align*}	
	where
	\begin{align*}
	\begin{cases}
	\Delta =& 4 \delta^2 -   \frac{8 \kappa_r^2  \mu_1^2 \sigma_2^2 \delta}{\mu_2 \sigma_1^2}, \\
	d_{1,2} =& \frac{2 \delta \pm \sqrt{\Delta}}{4 \rho \sigma_2^2}, \\
	\xi(t) =& \frac{\kappa_r^2 \mu_1^2}{\sigma_1^2} \frac{T-t}{\rho \mu_2(T-t) + \frac{\rho \mu_2}{\delta}} 1_{\{\Delta = 0\}} + \frac{d_1d_2(e^{\sqrt{\Delta}(T-t)}-1)}{d_1e^{\sqrt{\Delta}(T-t)}-d_2} 1_{\{\Delta >0\}}, \\
	\eta(t) =& \frac{ ( 2 \iota_r + 1)\delta \kappa_r^2 \mu_1^2}{\sigma_1^2} \frac{(T-t)^2 }{\delta(T-t)+1} 1_{\{\Delta = 0\}} + \frac{4 \rho \mu_2 ( 2 \iota_r + 1) d_1d_2}{3 \sqrt{\Delta}}\frac{  \left ( e^{\frac{\sqrt{\Delta}}{2}(T-t)} -1\right )^2 \left ( 1 +2 e^{-\frac{\sqrt{\Delta}}{2}(T-t)} \right )}{d_1e^{\sqrt{\Delta}(T-t)}-d_2} 1_{\{\Delta >0\}}, \\
	\zeta(t) =& \rho \int_t^T \bigg(\rho \mu_2 ( 2 \iota_r + 1) \eta(s) + \frac{1}{2} \rho \sigma_2^2 \eta(s)^2 + \rho \sigma_2^2  \xi(s) \bigg) ds + \frac{ \rho (\mu_0 - r)^2 }{    \sigma_0^2  }(T-t)  + \frac{ \rho^2 \iota_r^2 \mu_2^2 }{   \sigma_2^2 } (T-t), \\
	\alpha(t) =& \frac{(\kappa_r - \kappa)\mu_1}{\delta + r} (e^{- \delta(T-t)} - e^{r(T-t)}), \\
	\beta(t) =& \frac{\rho (\kappa_r - \kappa) (\iota_r+1) \mu_1 \mu_2}{\delta + r} \bigg( \frac{1}{\delta}(1-e^{-\delta(T-t)})+\frac{1}{r}(1-e^{r(T-t)}) \bigg)  - \frac{\rho ( \iota_r - \iota)\mu_2}{r} (1-e^{r(T-t)}).
	\end{cases}
	\end{align*}
\end{theorem}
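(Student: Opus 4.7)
The plan is to mirror the three-step strategy used for the jump model in Section \ref{mmvSN.section.main_result}, adapting each step to the diffusion case. First I would postulate a separation-of-variables ansatz of the same form as in Proposition \ref{mmvSN.lemma.solution.suppose},
\begin{equation*}
W(t,x,y,\lambda) = G(t)xy + H(t,\lambda)y^2 + I(t,\lambda)y + K(t,\lambda),
\end{equation*}
substitute it into the HJBI equation \eqref{mmvSN.diffusion.hjbi.equation} with generator \eqref{mmvSN.diffusion.generator}, and carry out the inner infimum over $(o,p,q)$ and the outer supremum over $(\pi,u,v)$. Since the diffusion generator is quadratic in each control, the first-order conditions produce explicit candidate strategies (linear in $y$ through $H$, $I$ and in $\lambda$ through $H_\lambda$), which I would substitute back to collapse the HJBI equation into four PDEs for $G$, $H$, $I$, $K$ with the terminal conditions $G(T)=1$, $H(T,\lambda)=\frac{1}{2\theta}$, $I(T,\lambda)=K(T,\lambda)=0$.

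Next I would solve the PDE system explicitly. The equation for $G$ is $G_t + rG = 0$, giving $G(t)=e^{r(T-t)}$ as before. For $H$, the crucial new feature is the $\tfrac12\rho\sigma_2^2 W_{\lambda\lambda}$ term in the generator together with the mixed $W_{x\lambda}$ and $W_{y\lambda}$ contributions; these force an exponential ansatz with a \emph{quadratic} exponent in $\lambda$,
\begin{equation*}
H(t,\lambda) = \frac{1}{2\theta}\,e^{\xi(t)\lambda^2 + \eta(t)\lambda + \zeta(t)}.
\end{equation*}
Matching coefficients of $\lambda^2$, $\lambda^1$ and $\lambda^0$ in the $H$-PDE produces a Riccati ODE for $\xi$, a linear ODE for $\eta$ (with $\xi$-dependent coefficient), and a quadrature for $\zeta$. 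The Riccati ODE for $\xi$ is exactly the source of the discriminant $\Delta = 4\delta^2 - 8\kappa_r^2\mu_1^2\sigma_2^2\delta/(\mu_2\sigma_1^2)$ and the roots $d_{1,2}$ appearing in the statement; solving it in closed form yields the two cases $\Delta=0$ (degenerate, handled by a limit) and $\Delta>0$ (standard Riccati solution via the two roots). Once $\xi$ is pinned down, $\eta$ and $\zeta$ are obtained by integration. For $I(t,\lambda)$ I would use the linear ansatz $I(t,\lambda)=\alpha(t)\lambda+\beta(t)$, whose coefficients satisfy decoupled linear ODEs identical in form to those in Lemma \ref{mmvSN.lemma.coefficients} (the diffusion correction only enters through $\eta$-dependent forcing that vanishes at the linear order), leading to the same $\alpha(t)$ and $\beta(t)$; and $K\equiv 0$ follows from its source-free PDE.

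With $G$, $H$, $I$, $K$ in hand, the candidate value function and the saddle-point controls are determined, but they depend on the unobservable $Y(t)$. I would therefore derive a diffusion analogue of Lemma \ref{mmvSN.lemma.relationship_of_XY} by applying It\^o's formula to $G(t)X^*(t) + 2H(t,\lambda(t))Y^*(t) + I(t,\lambda(t))$ under the optimal strategies \eqref{mmvSN.strategy.saddlepoint.equation} (diffusion version): the drift cancels exactly by virtue of the PDEs for $G$, $H$, $I$, while the martingale terms combine so that this quantity is conserved between times $s$ and $t$. Substituting this identity at $s=0$ with $Y^*(0)=1$ into the feedback forms eliminates $Y^*(t)$ in favor of $x_0$, $\lambda_0$ and the current $(x,\lambda(t))$, yielding the stated pre-committed strategies and value function.

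The main obstacle I anticipate is twofold: rigorously solving the Riccati ODE for $\xi$ (in particular, the $\Delta=0$ degenerate branch must be obtained as a limit and the resulting $\xi$, $\eta$ must remain bounded on $[0,T]$ so that $H>0$), and verifying admissibility of the candidate $b^*=(o^*,p^*,q^*)$, i.e.\ that the SDE \eqref{mmvSN.diffusion.dynamic.y} with these coefficients admits a nonnegative square-integrable martingale solution with $\mathbb{E}^P Y^*(t)=1$; here the quadratic $\xi\lambda^2$ in the exponent of $H$ means $q^*$ grows linearly in $\lambda$, so a Novikov-type check or a stopping/localization argument in the spirit of the one used around \eqref{mmvSN.condition.integrable.property} is needed. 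Once those two points are secured, the verification theorem immediately upgrades the candidate to the true value function and the optimal strategies.
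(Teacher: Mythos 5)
Your proposal follows essentially the same route as the paper: the same separation-of-variables ansatz (the paper simply omits $K$, which is identically zero), the same inner-infimum/outer-supremum reduction to a PDE system, the same Riccati analysis for $\xi$ with discriminant $\Delta$ and roots $d_{1,2}$, the same linear ansatz for $I$, and the same It\^o-based conservation identity relating $X^*$ and $Y^*$ to eliminate the unobservable $Y$ before setting $s=0$, $Y^*(0)=1$. Your closing remarks on admissibility of $b^*$ (where $p^*$ grows linearly in $\lambda$) are in fact more careful than the paper, which only sketches the verification step for the diffusion case by analogy with the jump model.
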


To prove Theorem \ref{mmvSN.diffusion.main_result}, we use a similar approach as in Section \ref{mmvSN.section.main_result}. We first find a candidate solution to the HJBI equation \eqref{mmvSN.diffusion.hjbi.equation}, then by Lemma \ref{mmvSN.difussion.lemma.relationship_of_XY} below, we provide expressions of value function and optimal strategies independent of process $Y^*(t)$, only as functions of $X(t)$ and $\lambda(t)$. Finally, by setting $s=0$, $Y(0)=\mathbb{E}[\frac{dQ}{dP}|\mathcal{F}_0]=\mathbb{E}[\frac{dQ}{dP}]=1$ and $\lambda(0)=\lambda_0$, we obtain the solutions to the original problem.

\begin{proposition} \label{mmvSN.diffusion.lemma.solution.suppose}
	\par Consider the following partial differential equations
	\begin{align}
	\label{mmvSN.diffusion.equation.function.value.coefficients}
	\begin{cases}
	0 =& G_t +  r G(t), \\
	0 =& H_t + (-\delta \lambda +  \rho (2 \iota_r+1) \mu_2) H_{\lambda} + \frac{1}{2} \rho \sigma_2^2 H_{\lambda \lambda} + H(t,\lambda) \bigg(\frac{ (\mu_0 - r)^2 }{    \sigma_0^2  } + \frac{ \kappa_r^2 \mu_1^2 \lambda^2 \delta }{   \rho \mu_2 \sigma_1^2  } + \frac{ \rho \iota_r^2 \mu_2^2 }{   \sigma_2^2 }\bigg), \\
	0 =& I_t + \bigg((- \kappa_r + \kappa)\mu_1\lambda  + ( - \iota_r + \iota) k \rho \mu_2\bigg)G(t)   + (-\delta \lambda  +  \rho (\iota_r+1) \mu_2 )  I_{\lambda} + \frac{1}{2} \rho \sigma_2^2  I_{\lambda \lambda},
	\end{cases}
	\end{align}
	with boundry conditions $G(T) = 1$, $H(T,\lambda) = \frac{1}{2\theta}$, $I(T,\lambda) = 0$. 
	If there exist smooth enough solutions $G(t)$, $H(t,\lambda)$ and $I(t,\lambda)$ for the above PDEs such that, for all $t \in [0,T]$ and $\lambda \in \mathbb{R}_{>0}$, $H(t,\lambda) > 0$ holds, then the solution to the HJB equation \eqref{mmvSN.diffusion.hjbi.equation} is of the following form:
	\begin{equation}
	\label{mmvSN.diffusion.value.function.W}
	W(t,x,y,\lambda) = G(t) xy + H(t,\lambda)y^2 +I(t,\lambda)y.
	\end{equation}
	Moreover, the corresponding optimal feedback strategies is given by
	\begin{numcases}{}
	\nonumber
	\pi^*(t,x,y,\lambda)= \frac{2 H(t,\lambda)(\mu_0 - r)  y}{   G(t) \sigma_0^2  },  \\
	\nonumber
	u^*(t,x,y,\lambda)= \frac{2 H(t,\lambda) \kappa_r \mu_1 \lambda  y}{ G(t)  \sigma_1^2\frac{ \rho \mu_2}{\delta}  },  \\
	\nonumber
	v^*(t,x,y,\lambda)= \frac{1}{k}\bigg(\frac{2 H(t,\lambda) \iota_r \mu_2    y }{   G(t) \sigma_2^2 } + \frac{2 H_{\lambda} y }{   G(t) } + \frac{I_{\lambda}}{G(t)}\bigg),  \\
	\label{mmvSN.difussion.strategy.saddlepoint.equation}
	o^*(t,x,y,\lambda)=   -  \frac{\mu_0 - r}{   \sigma_0  },  \\
	\nonumber
	p^*(t,x,y,\lambda)=   \frac{2 \kappa_r \mu_1 \lambda  }{   \sigma_1\sqrt{\frac{ \rho \mu_2}{\delta}}  },  \\
	\nonumber
	q^*(t,x,y,\lambda)= \frac{\iota_r \mu_2 \sqrt{ \rho}   }{  \sigma_2 }.
	\end{numcases}
\end{proposition}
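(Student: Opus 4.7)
The plan is to verify the quadratic ansatz \eqref{mmvSN.diffusion.value.function.W} by direct substitution into the HJBI equation \eqref{mmvSN.diffusion.hjbi.equation}, solving the inner saddle-point problem over the controls to read off the optimal feedbacks, and then matching coefficients of $xy$, $y^2$, and $y$ to extract the decoupled PDE system \eqref{mmvSN.diffusion.equation.function.value.coefficients}. I would first record the partial derivatives of the ansatz, namely $W_t = G'(t)xy + H_t y^2 + I_t y$, $W_x = G(t)y$, $W_y = G(t)x + 2H(t,\lambda)y + I(t,\lambda)$, $W_{xx} = 0$, $W_{yy} = 2H(t,\lambda)$, $W_{xy} = G(t)$, $W_{x\lambda} = 0$, $W_\lambda = H_\lambda y^2 + I_\lambda y$, $W_{\lambda\lambda} = H_{\lambda\lambda} y^2 + I_{\lambda\lambda} y$, and $W_{y\lambda} = 2H_\lambda y + I_\lambda$. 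Substituting into \eqref{mmvSN.diffusion.generator}, the $W_{xx}$ and $W_{x\lambda}$ terms drop, so the resulting expression is affine in the insurer's controls $a = (\pi,u,v)$ and strictly convex-quadratic in the adversary's controls $b = (o,p,q)$ by virtue of $W_{yy} = 2H > 0$.

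Next I would extract the saddle point. The first-order conditions in $\pi$, $u$, $v$ have coefficients independent of $(\pi,u,v)$ (because $W_{xx} = W_{x\lambda} = 0$), so they act as linear constraints whose satisfaction pins down $o^*, p^*, q^*$ as claimed, e.g. $(\mu_0 - r) W_x + \sigma_0 y o\, W_{xy} = 0$ yields $o^* = -(\mu_0 - r)/\sigma_0$, and analogously for $p^*, q^*$. Plugging these back into the first-order conditions $\partial_o \mathcal{L} = \partial_p \mathcal{L} = \partial_q \mathcal{L} = 0$ and solving gives the insurer feedbacks $\pi^*, u^*, v^*$ in the stated form. The verification inequalities $\mathcal{L}^{a^*,b} W \geq 0$ and $\mathcal{L}^{a,b^*} W \leq 0$ then follow by completing the square in $b$ about $b^*$, using $W_{yy} > 0$, and by the fact that $\mathcal{L}^{a,b^*}$ becomes affine in $a$ with vanishing slope at $a^*$.

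Finally, substituting $(a^*, b^*)$ back into $\mathcal{L}^{a,b} W$ yields a polynomial identity in $(x,y)$ whose coefficients depend on $(t,\lambda)$. The coefficient of $xy$ gives $G_t + r G = 0$; the coefficient of $y^2$ assembles the PDE for $H$, in which the $\rho (2\iota_r + 1) \mu_2 H_\lambda$ drift correction and the source terms $H\bigl( (\mu_0 - r)^2/\sigma_0^2 + \kappa_r^2 \mu_1^2 \lambda^2 \delta /(\rho \mu_2 \sigma_1^2) + \rho \iota_r^2 \mu_2^2/\sigma_2^2 \bigr)$ arise after cancelling the saddle-point values; the coefficient of $y$ assembles the PDE for $I$, with the $\rho(\iota_r+1) \mu_2 I_\lambda$ drift produced by the cross-term $\sigma_2 \sqrt{\rho} y q\, W_{y\lambda}$. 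The boundary conditions $G(T) = 1$, $H(T,\lambda) = 1/(2\theta)$, $I(T,\lambda) = 0$ come directly from $W(T,x,y,\lambda) = xy + \frac{1}{2\theta}y^2$.

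The main obstacle I anticipate is the bookkeeping in the $y^2$-matching step: the cross-term involving $W_{y\lambda}$ couples $H_\lambda$ with $q^*$, whose $\iota_r$-piece must combine cleanly with the direct $W_\lambda$ drift and the $W_{\lambda\lambda}$ diffusion to produce the drift $-\delta \lambda + \rho(2\iota_r + 1)\mu_2$ for $H$ rather than the bare $-\delta\lambda + \rho\mu_2$ appearing in \eqref{mmvSN.diffusion.dynamic.lambda}; simultaneously, terms $(H_\lambda)^2$ generated by completing the square in $q$ must cancel against the $\frac{1}{2}\rho\sigma_2^2 W_{\lambda\lambda}$ contribution to leave no $\lambda$-derivative squared in the $H$-equation. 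Once this reconciliation is carried out, the remaining matching and the check that $H > 0$ implies well-posedness of the saddle are routine.
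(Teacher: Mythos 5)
Your route is essentially the paper's: substitute the quadratic ansatz \eqref{mmvSN.diffusion.value.function.W} into the generator \eqref{mmvSN.diffusion.generator}, locate the saddle point through first-order conditions, and separate the coefficients of $xy$, $y^2$ and $y$ to obtain \eqref{mmvSN.diffusion.equation.function.value.coefficients}. The only structural difference is the order of the two optimizations: the paper minimizes over $b=(o,p,q)$ first (the quadratic part $y^2(o^2+p^2+q^2)H$ with $H>0$ gives strict convexity), obtains $b^*$ as a function of $a$, and then maximizes the resulting concave expression over $a=(\pi,u,v)$; you exploit $W_{xx}=W_{x\lambda}=0$ to note the generator is affine in $a$, so the stationarity conditions in $a$ pin down $b^*$ outright and the stationarity conditions in $b$ then return $a^*$. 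Both yield the same saddle point, and your ordering makes it transparent why $b^*$ is independent of $(x,y)$.

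Two points of detail. First, your $u$-stationarity condition $\kappa_r\mu_1\lambda W_x-\sigma_1\sqrt{\rho\mu_2/\delta}\,yp\,W_{xy}=0$ gives $p^*=\kappa_r\mu_1\lambda/(\sigma_1\sqrt{\rho\mu_2/\delta})$, i.e.\ half of the value displayed in \eqref{mmvSN.difussion.strategy.saddlepoint.equation}. The displayed $u^*$ is consistent with your value and not with the displayed $p^*$ (the paper's own intermediate relation $p^*=G u^*\sigma_1\sqrt{\rho\mu_2/\delta}/(2Hy)$ also gives your value), so the factor $2$ in the stated $p^*$ appears to be a typo that your derivation would expose rather than reproduce; you should not expect to recover it ``as claimed.'' Second, your anticipated mechanism for eliminating the $(H_\lambda)^2$ terms is not quite right: they do not cancel against the $\tfrac12\rho\sigma_2^2W_{\lambda\lambda}$ contribution, which survives verbatim as the $\tfrac12\rho\sigma_2^2H_{\lambda\lambda}$ term of the $H$-equation. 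They vanish because $v^*$ contains the summand $(2H_\lambda y+I_\lambda)/(kG)$, so that $2H_\lambda y+I_\lambda-Gkv^*=-2H\iota_r\mu_2 y/\sigma_2^2$ and the completed square $-\sigma_2^2\rho(2H_\lambda y+I_\lambda-Gkv^*)^2/(4H)$ collapses to $-\rho\iota_r^2\mu_2^2Hy^2/\sigma_2^2$, which is free of $H_\lambda$. Neither point undermines the overall plan; the coefficient matching then goes through exactly as in the paper.
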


The optimal strategies \eqref{mmvSN.difussion.strategy.saddlepoint.equation} depend on the auxiliary process $Y(t)$, which is not able to be observed in practice. Now we give an analogue of Lemma \ref{mmvSN.lemma.relationship_of_XY} to show the relationship between $X(t)$ and $Y(t)$.

\begin{lemma} \label{mmvSN.difussion.lemma.relationship_of_XY}
	\par Let $X^*(t)$ and $Y^*(t)$ be the state processes under the feedback strategies defined by \eqref{mmvSN.difussion.strategy.saddlepoint.equation}, then
	\begin{align}
	\label{mmvSN.difussion.equation.relationship_of_XY} X^*(t) G(t) - X^*(s) G(s)= - 2   Y^*(t) H(t,\lambda(t)) + 2 Y^*(s) H(s,\lambda(s))  -  I(t,\lambda(t)) + I(s,\lambda(s)).
	\end{align}
\end{lemma}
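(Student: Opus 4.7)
\medskip
\noindent\textbf{Proof Plan for Lemma \ref{mmvSN.difussion.lemma.relationship_of_XY}.} The strategy is to show that the process
\[
M(t) \;:=\; X^*(t)\,G(t) \;+\; 2\,Y^*(t)\,H(t,\lambda(t)) \;+\; I(t,\lambda(t))
\]
is constant on $[s,T]$ when the state processes are driven by the feedback strategies \eqref{mmvSN.difussion.strategy.saddlepoint.equation}. Once $dM(t)=0$ is established, the identity \eqref{mmvSN.difussion.equation.relationship_of_XY} follows immediately by rearranging $M(t)=M(s)$.

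\medskip
\noindent\textbf{Step 1 (Decomposition via It\^o).} I will apply It\^o's formula separately to $X^*(t)G(t)$, to $2Y^*(t)H(t,\lambda(t))$, and to $I(t,\lambda(t))$, using the SDEs \eqref{mmvSN.diffusion.dynamic.x}--\eqref{mmvSN.diffusion.dynamic.lambda} with the optimal feedbacks $(\pi^*,u^*,v^*)$ and $(o^*,p^*,q^*)$ substituted. Since $G$ is deterministic and $H$, $I$ depend only on $(t,\lambda)$, the only cross-variations I need are $d[Y^*,H]_t$ and $d[Y^*,I]_t$, produced through the common $W_2$-driver of $Y^*$ and $\lambda$, giving the extra terms $Y^* q^* \sigma_2\sqrt{\rho}\,H_\lambda\,dt$ and $Y^* q^* \sigma_2\sqrt{\rho}\,I_\lambda\,dt$ respectively.

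\medskip
\noindent\textbf{Step 2 (Brownian cancellations).} The $dW_0$ coefficient of $dM$ pairs $G\pi^*\sigma_0$ against $2HY^*o^*$; plugging in $\pi^*=\tfrac{2H(\mu_0-r)Y^*}{G\sigma_0^2}$ and $o^*=-\tfrac{\mu_0-r}{\sigma_0}$ makes these cancel. The $dW_1$ coefficient pairs $-Gu^*\sigma_1\sqrt{\rho\mu_2/\delta}$ against $2HY^*p^*$, and the stated feedbacks $u^*$ and $p^*$ again cancel. The $dW_2$ coefficient receives contributions from $-Gkv^*\sigma_2\sqrt{\rho}$, from $2HY^*q^*$, from $2Y^*H_\lambda\sigma_2\sqrt{\rho}$ (Itô on $H$), and from $I_\lambda\sigma_2\sqrt{\rho}$ (Itô on $I$); substituting the explicit form of $v^*$ from \eqref{mmvSN.difussion.strategy.saddlepoint.equation} yields exactly the combination $\tfrac{2H\iota_r\mu_2 Y^*}{\sigma_2^2}+2H_\lambda Y^*+I_\lambda$ multiplied by $-\sigma_2\sqrt{\rho}$, which combined with $q^*=\iota_r\mu_2\sqrt{\rho}/\sigma_2$ kills the $W_2$ coefficient.

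\medskip
\noindent\textbf{Step 3 (Drift cancellation via the PDEs).} The remaining $dt$ term in $dM$ collects (i) the insurance/investment drifts from \eqref{mmvSN.diffusion.dynamic.x} multiplied by $G$, (ii) the term $2Y^*[H_t+H_\lambda(-\delta\lambda+\rho\mu_2)+\tfrac{1}{2}\rho\sigma_2^2 H_{\lambda\lambda}]+2Y^*q^*\sigma_2\sqrt{\rho}H_\lambda$ from It\^o on $2Y^*H$, and (iii) the analogous term $I_t+I_\lambda(-\delta\lambda+\rho\mu_2)+\tfrac{1}{2}\rho\sigma_2^2 I_{\lambda\lambda}$ from It\^o on $I$. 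Substituting the explicit feedbacks $(\pi^*,u^*,v^*,q^*)$ and then invoking the PDEs \eqref{mmvSN.diffusion.equation.function.value.coefficients} (the $G$-equation kills the $rX^*G$ term that $-Gu^*$ and friends generate; the $H$-equation absorbs the quadratic contributions $H\tfrac{(\mu_0-r)^2}{\sigma_0^2}$, $H\tfrac{\kappa_r^2\mu_1^2\lambda^2\delta}{\rho\mu_2\sigma_1^2}$, $H\tfrac{\rho\iota_r^2\mu_2^2}{\sigma_2^2}$ produced by inserting $\pi^*,u^*,v^*$; the $I$-equation absorbs the linear-in-$\lambda$ residual together with the $(\kappa_r-\kappa)\mu_1\lambda$, $(\iota_r-\iota)k\mu_2\rho$ terms from $dX^*$) yields zero drift.

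\medskip
\noindent\textbf{Step 4 (Conclusion).} Combining Steps 2 and 3 gives $dM(t)=0$ on $[s,T]$, so $M$ is constant; integrating from $s$ to $t$ and isolating $X^*(t)G(t)-X^*(s)G(s)$ delivers \eqref{mmvSN.difussion.equation.relationship_of_XY}. The main obstacle is the bookkeeping in Step 3: the $2H_\lambda\,dY^*\cdot d\lambda$ cross-variation together with the $q^*$-induced drift shifts the effective coefficient on $H_\lambda$ inside the PDE from $-\delta\lambda+\rho\mu_2$ to $-\delta\lambda+\rho(2\iota_r+1)\mu_2$, which is precisely the coefficient present in the $H$-PDE of \eqref{mmvSN.diffusion.equation.function.value.coefficients}; matching this discrepancy term-by-term (similarly for $I$, where only a shift to $-\delta\lambda+\rho(\iota_r+1)\mu_2$ is needed) is the most delicate part of the algebra, but it is a direct verification rather than a new idea.
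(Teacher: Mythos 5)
Your proposal is correct and takes essentially the same route as the paper: the paper's own proof of this lemma simply says that, by the same It\^o computation as in the jump case, $d\bigl(X^*(t)G(t)\bigr) = -2\,d\bigl(Y^*(t)H(t,\lambda(t))\bigr) - dI(t,\lambda(t))$, which is precisely your claim that $dM(t)=0$. One small imprecision: the cross-variation $d[Y^*,I]$ you list in Step 1 never actually enters, because $I(t,\lambda(t))$ is not multiplied by $Y^*$ in $M$ --- the shift of the $I_\lambda$-coefficient to $-\delta\lambda+\rho(\iota_r+1)\mu_2$ that you need in Step 4 instead comes from the $I_\lambda/(kG)$ component of $v^*$ feeding into the drift of $X^*$ via $\iota_r v^* k\mu_2\rho\,G$, which your Step 3 already accounts for.
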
 
\begin{proof}
	\par By the same method of Lemma \ref{mmvSN.lemma.relationship_of_XY}, it is easy to prove that
	\begin{align*}
	d\biggl(X^*(t) G(t)\biggr) = - 2 d \bigg( Y^*(t) H(t,\lambda(t)) \bigg) - d I(t,\lambda(t)).
	\end{align*}	
\end{proof}

Note that the process $\lambda(t)$ does not depend on the controls, by substituting \eqref{mmvSN.difussion.equation.relationship_of_XY} into \eqref{mmvSN.diffusion.value.function.W} and  \eqref{mmvSN.difussion.strategy.saddlepoint.equation}, we can derive the optimal feedback strategies as follows:
\begin{theorem}
	\par  Let $X^*(t)$ and $Y^*(t)$ be the state processes under the feedback strategies defined by \eqref{mmvSN.strategy.saddlepoint.equation}, then for any $0 \leq s \leq t \leq T$, the value function is given by
	\begin{align}
	W(t,X^*(t)) = -\frac{(G(t)X^*(t)+I(t,\lambda(t)))^2}{4H(t,\lambda(t))} + \frac{(G(s)X^*(s)+2H(s,\lambda(s)Y^*(s)+I(s,\lambda(s)))^2}{4H(t,\lambda(t))}+K(t,\lambda(t)),
	\end{align}
	and the optimal strategies $\pi^*$, $u^*$ and $v^*$ at time $t$ satisfy
	\begin{align*}
	\begin{cases}
	\pi^*(t) =& \frac{ \mu_0 -  r  }{ \sigma_0 ^2} \bigg( - X^*(t)  + X^*(s) \frac{G(s)}{G(t)} + 2 Y^*(s) \frac{H(s,\lambda(s))}{G(t)}  -  \frac{I(t,\lambda(t))}{G(t)} + \frac{I(s,\lambda(s))}{G(t)} \bigg), \\
	u^*(t) =& \frac{ \kappa_r \mu_1 \lambda(t) }{  \sigma_1^2\frac{ \rho \mu_2}{\delta}  }  \bigg( - X^*(t)  + X^*(s) \frac{G(s)}{G(t)} + 2 Y^*(s) \frac{H(s,\lambda(s))}{G(t)}  -  \frac{I(t,\lambda(t))}{G(t)} + \frac{I(s,\lambda(s))}{G(t)} \bigg), \\
	v^*(t) =& \frac{1}{k}\bigg(\frac{\iota_r \mu_2 }{ \sigma_2^2 } + \frac{ H_{\lambda}(t,\lambda(t)) }{ H(t,\lambda(t)) }\bigg) \bigg( - X^*(t)  + X^*(s) \frac{G(s)}{G(t)} + 2 Y^*(s) \frac{H(s,\lambda(s))}{G(t)}  -  \frac{I(t,\lambda(t))}{G(t)} + \frac{I(s,\lambda(s))}{G(t)} \bigg) \\
	&+ \frac{I_{\lambda}(t,\lambda(t))}{kG(t)}.
	\end{cases}
	\end{align*}
\end{theorem}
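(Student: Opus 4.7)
The plan is to reduce the stated theorem to a direct algebraic substitution, using Proposition \ref{mmvSN.diffusion.lemma.solution.suppose} to write $W$ and the optimal feedback in terms of $(t,x,y,\lambda)$, and Lemma \ref{mmvSN.difussion.lemma.relationship_of_XY} to eliminate the unobservable auxiliary state $Y^*(t)$. All the hard work (solving the HJBI system, verifying the candidate value function, and establishing the conservation-law-type identity between $X^*$ and $Y^*$) has already been done; what remains is to convert those results into the ``precommitted'' form involving only $(X^*(t),\lambda(t))$ together with the time-$s$ data $(X^*(s),Y^*(s),\lambda(s))$.

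First, I would invert the identity from Lemma \ref{mmvSN.difussion.lemma.relationship_of_XY}. Writing
\[
A := G(s)X^*(s) + 2H(s,\lambda(s))Y^*(s) + I(s,\lambda(s)), \qquad B := G(t)X^*(t) + I(t,\lambda(t)),
\]
the identity \eqref{mmvSN.difussion.equation.relationship_of_XY} rearranges to $2H(t,\lambda(t))Y^*(t) = A - B$, hence
\[
Y^*(t) \;=\; \frac{A - B}{2H(t,\lambda(t))}.
\]
Since $H(t,\lambda(t))>0$ by assumption in Proposition \ref{mmvSN.diffusion.lemma.solution.suppose}, this expression is well-defined.

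Next, for the value function I would substitute $y = Y^*(t)$ into $W(t,x,y,\lambda)=G(t)xy+H(t,\lambda)y^2+I(t,\lambda)y$, which after grouping equals $ByY^*(t)+H(t,\lambda(t))Y^*(t)^2$ (since $B=G(t)X^*(t)+I(t,\lambda(t))$). Using $2H(t,\lambda(t))Y^*(t)=A-B$ gives
\[
W(t,X^*(t),Y^*(t),\lambda(t)) \;=\; \frac{B(A-B)}{2H(t,\lambda(t))} + \frac{(A-B)^2}{4H(t,\lambda(t))} \;=\; \frac{(A-B)(A+B)}{4H(t,\lambda(t))} \;=\; \frac{A^2-B^2}{4H(t,\lambda(t))},
\]
which is exactly the stated formula (with the $+K(t,\lambda(t))$ term vanishing, since the diffusion analogue of Proposition \ref{mmvSN.lemma.solution.suppose} has no $K$-equation and $K\equiv 0$ is consistent with the terminal condition).

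For the three feedback strategies I would read off from \eqref{mmvSN.difussion.strategy.saddlepoint.equation} that each of $\pi^*$, $u^*$, $v^*$ is linear in $y$ with a coefficient proportional to $H(t,\lambda(t))/G(t)$ (plus, for $v^*$, an extra $\lambda$-gradient piece). Substituting $2H(t,\lambda(t))Y^*(t)=A-B$ immediately produces the form $\tfrac{\text{coefficient}}{G(t)}(A-B)$; expanding $A-B$ and dividing through by $G(t)$ yields the brackets $\bigl(-X^*(t)+X^*(s)\tfrac{G(s)}{G(t)}+2Y^*(s)\tfrac{H(s,\lambda(s))}{G(t)}-\tfrac{I(t,\lambda(t))}{G(t)}+\tfrac{I(s,\lambda(s))}{G(t)}\bigr)$ appearing in the theorem. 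For $v^*$ there is an extra $\tfrac{I_\lambda(t,\lambda(t))}{kG(t)}$ term coming directly from the $\tfrac{I_\lambda}{G}$ summand that does not involve $y$, and the prefactor simplifies to $\tfrac1k\bigl(\tfrac{\iota_r\mu_2}{\sigma_2^2}+\tfrac{H_\lambda(t,\lambda(t))}{H(t,\lambda(t))}\bigr)$ after cancelling the common factor $2H(t,\lambda(t))$. The only ``obstacle'' is purely bookkeeping — tracking the $\lambda(t)$ factors that appear in $u^*$ (arising from the $\lambda$-dependent diffusion coefficient in the Cox-approximation noise) and checking that the $H_\lambda$ term for $v^*$ is carried cleanly through the substitution; no additional differential equation or verification argument is required beyond what the cited results provide.
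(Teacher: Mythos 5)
Your proposal is correct and follows essentially the same route as the paper: the paper obtains this theorem precisely by substituting the identity of Lemma \ref{mmvSN.difussion.lemma.relationship_of_XY} into the value function \eqref{mmvSN.diffusion.value.function.W} and the feedback strategies \eqref{mmvSN.difussion.strategy.saddlepoint.equation}, which is exactly your $2H(t,\lambda(t))Y^*(t)=A-B$ substitution and the resulting $\frac{A^2-B^2}{4H}$ factorization. Your remark that the $K(t,\lambda(t))$ term is vacuous in the diffusion setting (since the decomposition there carries no $K$-component) is also accurate.
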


Next, we give the explicit forms of the solutions to \eqref{mmvSN.diffusion.equation.function.value.coefficients}.
\begin{lemma} \label{mmvSN.diffusion.lemma.coefficients}
	\par If $ \frac{\delta \mu_2}{\sigma_2^2} \geq \frac{2 \kappa_r^2  \mu_1^2 }{ \sigma_1^2}$, then the solutions to \eqref{mmvSN.diffusion.equation.function.value.coefficients} are as follows
	\begin{numcases}{}
	\nonumber
	G(t) = e^{r(T-t)}, \\
	\nonumber
	H(t,\lambda) = \frac{1}{2 \theta}e^{\xi(t) \lambda^2 + \eta(t) \lambda + \zeta(t)}, \\
	\label{mmvSN.diffusion.function.value.coefficients.solution}
	I(t,\lambda) = \alpha(t) \lambda +\beta(t),
	\end{numcases}
	where
	\begin{align*}
	\begin{cases}
	\Delta =& 4 \delta^2 -   \frac{8 \kappa_r^2  \mu_1^2 \sigma_2^2 \delta}{\mu_2 \sigma_1^2}, \\
	d_{1,2} =& \frac{2 \delta \pm \sqrt{\Delta}}{4 \rho \sigma_2^2}, \\
	\xi(t) =& \frac{\kappa_r^2 \mu_1^2}{\sigma_1^2} \frac{T-t}{\rho \mu_2(T-t) + \frac{\rho \mu_2}{\delta}} 1_{\{\Delta = 0\}} + \frac{d_1d_2(e^{\sqrt{\Delta}(T-t)}-1)}{d_1e^{\sqrt{\Delta}(T-t)}-d_2} 1_{\{\Delta >0\}}, \\
	\eta(t) =& \frac{ ( 2 \iota_r + 1)\delta \kappa_r^2 \mu_1^2}{\sigma_1^2} \frac{(T-t)^2 }{\delta(T-t)+1} 1_{\{\Delta = 0\}} + \frac{4 \rho \mu_2 ( 2 \iota_r + 1) d_1d_2}{3 \sqrt{\Delta}}\frac{  \left ( e^{\frac{\sqrt{\Delta}}{2}(T-t)} -1\right )^2 \left ( 1 +2 e^{-\frac{\sqrt{\Delta}}{2}(T-t)} \right )}{d_1e^{\sqrt{\Delta}(T-t)}-d_2} 1_{\{\Delta >0\}}, \\
	\zeta(t) =& \rho \int_t^T \bigg(\rho \mu_2 ( 2 \iota_r + 1) \eta(s) + \frac{1}{2} \rho \sigma_2^2 \eta(s)^2 + \rho \sigma_2^2  \xi(s) \bigg) ds + \frac{ \rho (\mu_0 - r)^2 }{    \sigma_0^2  }(T-t)  + \frac{ \rho^2 \iota_r^2 \mu_2^2 }{   \sigma_2^2 } (T-t), \\
	\alpha(t) =& \frac{(\kappa_r - \kappa)\mu_1}{\delta + r} (e^{- \delta(T-t)} - e^{r(T-t)}), \\
	\beta(t) =& \frac{\rho (\kappa_r - \kappa) (\iota_r+1) \mu_1 \mu_2}{\delta + r} \bigg( \frac{1}{\delta}(1-e^{-\delta(T-t)})+\frac{1}{r}(1-e^{r(T-t)}) \bigg)  - \frac{\rho ( \iota_r - \iota)\mu_2}{r} (1-e^{r(T-t)}).
	\end{cases}
	\end{align*}
\end{lemma}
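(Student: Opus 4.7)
The plan is to verify the ansatz
\begin{equation*}
G(t)=e^{r(T-t)},\qquad H(t,\lambda)=\tfrac{1}{2\theta}e^{\xi(t)\lambda^{2}+\eta(t)\lambda+\zeta(t)},\qquad I(t,\lambda)=\alpha(t)\lambda+\beta(t),
\end{equation*}
by reducing the PDE system \eqref{mmvSN.diffusion.equation.function.value.coefficients} to ODEs for the time-dependent coefficients. The first equation is the trivial ODE $G_{t}+rG=0$ with $G(T)=1$, so $G(t)=e^{r(T-t)}$ immediately.

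For $H$, I would compute $H_{t}/H=\xi_{t}\lambda^{2}+\eta_{t}\lambda+\zeta_{t}$, $H_{\lambda}/H=2\xi\lambda+\eta$, $H_{\lambda\lambda}/H=(2\xi\lambda+\eta)^{2}+2\xi$, divide the second line of \eqref{mmvSN.diffusion.equation.function.value.coefficients} by $H$, and separate powers of $\lambda$. The $\lambda^{2}$ coefficient yields a Riccati equation
\begin{equation*}
\xi_{t}-2\delta\xi+2\rho\sigma_{2}^{2}\xi^{2}+\tfrac{\kappa_{r}^{2}\mu_{1}^{2}\delta}{\rho\mu_{2}\sigma_{1}^{2}}=0,\qquad \xi(T)=0;
\end{equation*}
the $\lambda^{1}$ coefficient gives a linear ODE for $\eta$ with the coefficient $\xi(t)$ known; and the $\lambda^{0}$ coefficient is an inhomogeneous ODE for $\zeta$ that integrates directly from the terminal condition $\zeta(T)=0$. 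The Riccati equation I would linearise via $\xi=\frac{u'}{2\rho\sigma_{2}^{2}u}$, turning it into the constant-coefficient second-order ODE $u''-2\delta u'+\frac{2\delta\sigma_{2}^{2}\kappa_{r}^{2}\mu_{1}^{2}}{\mu_{2}\sigma_{1}^{2}}u=0$, whose characteristic discriminant is exactly $\Delta=4\delta^{2}-\frac{8\delta\sigma_{2}^{2}\kappa_{r}^{2}\mu_{1}^{2}}{\mu_{2}\sigma_{1}^{2}}$. The assumption $\frac{\delta\mu_{2}}{\sigma_{2}^{2}}\geq\frac{2\kappa_{r}^{2}\mu_{1}^{2}}{\sigma_{1}^{2}}$ is exactly $\Delta\geq 0$, so the characteristic roots $\delta\pm\tfrac{1}{2}\sqrt{\Delta}$ are real, and in the two sub-cases $\Delta>0$ (two distinct real roots) and $\Delta=0$ (double root) one obtains the two formulas in the statement after imposing $\xi(T)=0$; the constants $d_{1,2}$ are normalisations of these roots.

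Given $\xi(t)$, the linear ODE for $\eta$ reads $\eta_{t}+(2\rho\sigma_{2}^{2}\xi-\delta)\eta+2\rho(2\iota_{r}+1)\mu_{2}\xi=0$ with $\eta(T)=0$, which I would solve by the integrating factor $\exp\!\left(\int_{t}^{T}(2\rho\sigma_{2}^{2}\xi(s)-\delta)\,ds\right)$; using $\xi=\frac{u'}{2\rho\sigma_{2}^{2}u}$ the integrating factor simplifies to $u(t)e^{\delta(T-t)}/u(T)$, reducing $\eta$ to a quadrature that, after substituting the explicit $u$, gives precisely the claimed closed form in both cases $\Delta>0$ and $\Delta=0$. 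With $\xi,\eta$ in hand, $\zeta(t)=\int_{t}^{T}\!\big(\rho(2\iota_{r}+1)\mu_{2}\eta(s)+\tfrac12\rho\sigma_{2}^{2}\eta(s)^{2}+\rho\sigma_{2}^{2}\xi(s)+\tfrac{(\mu_{0}-r)^{2}}{\sigma_{0}^{2}}+\tfrac{\rho\iota_{r}^{2}\mu_{2}^{2}}{\sigma_{2}^{2}}\big)\,ds$ follows immediately, matching the stated expression.

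For $I$, substituting the affine ansatz and using $G(t)=e^{r(T-t)}$ gives $I_{t}=\alpha_{t}\lambda+\beta_{t}$, $I_{\lambda\lambda}=0$, and the third equation decouples into the two linear ODEs
\begin{equation*}
\alpha_{t}-\delta\alpha+(\kappa-\kappa_{r})\mu_{1}e^{r(T-t)}=0,\qquad
\beta_{t}+\rho(\iota_{r}+1)\mu_{2}\alpha+(\iota-\iota_{r})k\rho\mu_{2}e^{r(T-t)}=0,
\end{equation*}
with $\alpha(T)=\beta(T)=0$. The $\alpha$-equation is solved by the integrating factor $e^{-\delta t}$ and an exponential trial particular solution, producing $\alpha(t)=\frac{(\kappa_{r}-\kappa)\mu_{1}}{\delta+r}(e^{-\delta(T-t)}-e^{r(T-t)})$; plugging this into the $\beta$-equation and integrating from $t$ to $T$ yields the stated $\beta(t)$. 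I expect the main technical obstacle to be the Riccati-to-linear reduction followed by the quadrature for $\eta$: keeping track of the two cases $\Delta>0$ and $\Delta=0$ and simplifying the resulting expression into the compact form with the factor $\bigl(e^{\frac{\sqrt{\Delta}}{2}(T-t)}-1\bigr)^{2}\bigl(1+2e^{-\frac{\sqrt{\Delta}}{2}(T-t)}\bigr)/(d_{1}e^{\sqrt{\Delta}(T-t)}-d_{2})$ requires careful bookkeeping; everything else is routine linear ODE work. Positivity of $H$, demanded by the hypothesis of Proposition \ref{mmvSN.diffusion.lemma.solution.suppose}, is automatic from the exponential ansatz.
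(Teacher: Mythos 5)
Your proposal is correct and follows the paper's strategy almost step for step: the same exponential-quadratic ansatz for $H$ and affine ansatz for $I$, the same separation into a Riccati equation for $\xi$, a first-order linear equation for $\eta$ solved by the integrating factor $e^{-\int_t^s(\delta-2\rho\sigma_2^2\xi(\tau))\,d\tau}$, a quadrature for $\zeta$, and the two linear ODEs for $\alpha,\beta$. The one place you genuinely diverge is the Riccati equation: you linearise via $\xi=\frac{u'}{2\rho\sigma_2^2 u}$ and read $\Delta$ and $d_{1,2}$ off the characteristic roots of the resulting second-order ODE, whereas the paper integrates the autonomous Riccati equation directly by separation of variables and partial fractions over the roots of $2\rho\sigma_2^2 d^2-2\delta d+\frac{\kappa_r^2\mu_1^2\delta}{\rho\mu_2\sigma_1^2}=0$. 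The two routes are equivalent, but each carries a well-posedness obligation: the paper must (and does, by a contradiction argument) show $\xi(t)<d_2$ on $[0,T]$ so that the logarithm in the partial-fraction antiderivative never hits its singularity, while your linearisation instead requires checking that $u$ has no zero on $[0,T]$ — with the terminal condition $\xi(T)=0$ and the explicit exponential form of $u$ this is immediate, so your route arguably handles the global-existence issue more cleanly; you should still state it explicitly. Everything else, including the observation that the hypothesis is exactly $\Delta\ge 0$ and the bookkeeping for the two cases $\Delta=0$ and $\Delta>0$, matches the paper's computation.
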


\par By far, from the above analysis and computation, we are able to find the solution of HJBI equation \eqref{mmvSN.diffusion.hjbi.equation}:

\begin{theorem} \label{mmvSN.diffusion.theorem.solution.explicit}
	\par {\bf (solution of HJBI equation)} Assume the initial values $X(s) = x_s$ and $Y(s) = y_s$. The value function related to the diffusion approximation is given by $W(t,x) \in C^{1,2}([s,T] \times \mathbb{R})$ as follows,
	\begin{align}
	\label{mmvSN.diffusion.function.value} 	W(t,x;s,x_s,y_s) = -\frac{\theta(e^{r(T-t)}x+\alpha(t)\lambda(t) + \beta(t))^2}{4e^{\xi(t)\lambda(t)^2 + \eta(t) \lambda(t) + \zeta(t)}} + \frac{\theta(e^{rT}x_s+\frac{2 y_s}{\theta}e^{\xi(s)\lambda(s)^2 + \eta(s) \lambda(s) + \zeta(s)}+\alpha(s)\lambda(s) + \beta(s))^2}{4e^{\xi(t)\lambda(t)^2 + \eta(t) \lambda(t) + \zeta(t)}},
	\end{align}
	and the equilibrium strategies are precommitted feedback
	\begin{align}
	\label{mmvSN.diffusion.strategy.optimal.a} 
	\begin{cases}
	\pi^*(t,x;s,x_s,y_s) =& \frac{ \mu_0 -  r  }{ \sigma_0 ^2} \bigg( - x  + x_s e^{r(t-s)} +  \frac{y_s}{\theta} e^{\xi(t)\lambda(t)^2 + \eta(t) \lambda(t) + \zeta(t)} e^{-r(T-t)}  \\
	&-  (\alpha(t) \lambda(t) + \beta(t) )e^{-r(T-t)}  + (\alpha(s) \lambda(s) + \beta(s) )e^{-r(T-t)} \bigg), \\
	u^*(t,x;s,x_s,y_s)=& \frac{\kappa_r \mu_1 }{ \sigma_1^2}  \bigg( - x  + x_s e^{r(t-s)} +  \frac{y_s}{\theta} e^{\xi(t)\lambda(t)^2 + \eta(t) \lambda(t) + \zeta(t)} e^{-r(T-t)}  \\
	&-  (\alpha(t) \lambda(t) + \beta(t) )e^{-r(T-t)}  + (\alpha(s) \lambda(s) + \beta(s) )e^{-r(T-t)} \bigg), \\
	v^*(t,x;s,x_s,y_s) =& \frac{1}{ k } (\frac{\iota_r \mu_2}{\sigma_2^2} + 2\xi(t)\lambda(t)+\eta(t))  \bigg( - x  + x_s e^{r(t-s)} +  \frac{y_s}{\theta} e^{\xi(t)\lambda(t)^2 + \eta(t) \lambda(t) + \zeta(t)} e^{-r(T-t)}  \\
	&-  (\alpha(t) \lambda(t) + \beta(t) )e^{-r(T-t)}  + (\alpha(s) \lambda(s) + \beta(s) )e^{-r(T-t)} \bigg)  + \frac{1}{ k } \alpha(t)  e^{-r(T-t)},
	\end{cases}
	\end{align}
	and
	\begin{align}
	\label{mmvSN.diffusion.strategy.optimal.b}
	\begin{cases}
	&o^*(t) = - \frac{ \mu_0 -  r }{  \sigma_0 }, \\
	&p^*(t,z)= \frac{2 \kappa_r \mu_1 \lambda  }{   \sigma_1\sqrt{\frac{ \rho \mu_2}{\delta}}  }, \\
	&q^*(t,z)= \frac{\iota_r \mu_2 \sqrt{ \rho}   }{  \sigma_2 }.
	\end{cases}
	\end{align}
\end{theorem}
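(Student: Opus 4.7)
The plan is to assemble the theorem from three pieces already developed in the section: the ansatz $W(t,x,y,\lambda) = G(t)xy + H(t,\lambda)y^2 + I(t,\lambda)y$ from Proposition \ref{mmvSN.diffusion.lemma.solution.suppose}, the closed-form coefficients $(G,H,I)$ from Lemma \ref{mmvSN.diffusion.lemma.coefficients}, and the state-coupling identity from Lemma \ref{mmvSN.difussion.lemma.relationship_of_XY}. Once these are in hand, the theorem is essentially a substitution followed by a verification argument.

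First, I would check (or cite from Lemma \ref{mmvSN.diffusion.lemma.coefficients}) that the stated expressions for $G, H, I$ solve the PDE system \eqref{mmvSN.diffusion.equation.function.value.coefficients}. The log-quadratic-in-$\lambda$ ansatz $H = \tfrac{1}{2\theta}\exp(\xi \lambda^2 + \eta \lambda + \zeta)$ reduces the PDE for $H$ to a Riccati ODE for $\xi$ plus two linear ODEs for $\eta$ and $\zeta$ obtained by matching powers of $\lambda$; the Riccati ODE produces the cases $\Delta=0$ and $\Delta>0$ via the roots $d_{1,2}$. Similarly, the linear-in-$\lambda$ ansatz $I=\alpha\lambda+\beta$ decouples into first-order linear ODEs for $\alpha$ and $\beta$ solved by integrating factors. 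Thus Proposition \ref{mmvSN.diffusion.lemma.solution.suppose} supplies an HJBI solution and the feedback maps \eqref{mmvSN.difussion.strategy.saddlepoint.equation}.

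Next, to remove the dependence on the unobservable $Y^*$, I would apply Lemma \ref{mmvSN.difussion.lemma.relationship_of_XY}, which gives
\begin{align*}
2H(t,\lambda(t))Y^*(t) = G(s)X^*(s) + 2H(s,\lambda(s))y_s + I(s,\lambda(s)) - G(t)X^*(t) - I(t,\lambda(t)).
\end{align*}
Substituting this identity into $W(t,X^*(t),Y^*(t),\lambda(t))$ and grouping the $y^2$ term via completing the square produces the two-term difference-of-squares form \eqref{mmvSN.diffusion.function.value}. Plugging the identity into \eqref{mmvSN.difussion.strategy.saddlepoint.equation} and using $H_\lambda/H = 2\xi\lambda + \eta$ converts $\pi^*, u^*, v^*$ into the precommitted feedbacks \eqref{mmvSN.diffusion.strategy.optimal.a}. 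The strategies $o^*, p^*, q^*$ in \eqref{mmvSN.diffusion.strategy.optimal.b} are read off directly from \eqref{mmvSN.difussion.strategy.saddlepoint.equation} since they do not depend on $y$, and the $p^*$ entry uses $\tfrac{2H\kappa_r\mu_1\lambda}{G\sigma_1^2 \rho\mu_2/\delta}$ combined with the representation of $Y^*$-increments.

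The bulk of the work is algebraic bookkeeping; the one genuine check is that the boundary condition $W(T,x,y,\lambda) = xy + \tfrac{1}{2\theta}y^2$ is recovered from the square-completed expression, which serves as a consistency test on the signs and scaling. Conceptually, the main obstacle is confirming that the constructed $(a^*,b^*)$ are admissible in the sense of Definition \ref{mmvSN.diffusion.definition.admissible}: specifically that the controlled $X^*$ has finite moments and that $Y^*$ generated by the constant $o^*$ and the affine (in $\lambda$) $p^*$ is a nonnegative square-integrable $P$-martingale with $\mathbb{E}^P Y^*(t) = 1$. Both follow from the boundedness of the deterministic coefficients on $[0,T]$, the Lipschitz linear structure of the SDEs \eqref{mmvSN.diffusion.dynamic.x}--\eqref{mmvSN.diffusion.dynamic.y}, and a standard Novikov argument for the exponential martingale driven by the Brownian increments. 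With admissibility in place, the verification theorem for the diffusion case then closes the proof.
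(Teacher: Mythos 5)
Your overall route---assembling the theorem from Proposition \ref{mmvSN.diffusion.lemma.solution.suppose}, Lemma \ref{mmvSN.difussion.lemma.relationship_of_XY} and Lemma \ref{mmvSN.diffusion.lemma.coefficients}, substituting the state-coupling identity to eliminate $Y^*$, and closing with the verification theorem---is exactly the paper's route; the paper offers no separate proof of this theorem beyond that assembly together with an appeal to the argument used for the jump-case Theorem \ref{mmvSN.theorem.solution.explicit}.

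There is, however, one step in your admissibility discussion that would fail as written. You assert that $Y^*$ is a nonnegative square-integrable martingale because of ``the boundedness of the deterministic coefficients on $[0,T]$'' and ``a standard Novikov argument.'' In the diffusion model the Girsanov kernel $p^*(t)=\frac{2\kappa_r\mu_1\lambda(t)}{\sigma_1\sqrt{\rho\mu_2/\delta}}$ is neither deterministic nor bounded: $\lambda(t)$ solves the Ornstein--Uhlenbeck-type equation \eqref{mmvSN.diffusion.dynamic.lambda} and is an unbounded (Gaussian) process. Square-integrability of the resulting stochastic exponential requires an exponential moment bound of the form $\mathbb{E}^P\exp\bigl(c\int_s^T\lambda(u)^2\,du\bigr)<\infty$, which is exactly where the standing hypothesis $\frac{\delta\mu_2}{\sigma_2^2}\ge\frac{2\kappa_r^2\mu_1^2}{\sigma_1^2}$ (equivalently $\Delta\ge 0$) of Lemma \ref{mmvSN.diffusion.lemma.coefficients} is needed: it keeps the Riccati solution $\xi(t)$, and hence $H(t,\lambda)$ and the second moment of $Y^*H$, finite on $[s,T]$. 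A bare Novikov citation does not cover a quadratic-in-OU exponent over an arbitrary horizon; you need either the explicit Gaussian exponential-moment computation or the change-of-measure/Feynman--Kac device used in the proof of Theorem \ref{mmfSN.theorem.efficient_frontier}. The same caveat applies to the moment bounds on $X^*$, whose coefficients under the feedback $(u^*,v^*)$ also involve $\lambda(t)$ and $\lambda(t)^2$. The rest of the proposal (the reduction of the $H$-equation to a Riccati system, the use of $H_\lambda/H=2\xi\lambda+\eta$ for $v^*$, and the boundary-condition consistency check) matches the paper.
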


\iftrue
\section{Numerical Examples} \label{mmvSN.section.numerical}

In this section, we assume $U_i$ is exponentially distributed with parameter $\beta_1 = 0.2$ and $V_i$ is exponentially distributed with parameter $\beta_2 = 0.3$. Other parameters are supposed to be
\begin{itemize}
	\item $T = 100$ months is the time horizon of planning;
	\item $s = 0$ is the initial time;
	\item $\mu_0 = 0.03, \sigma_0 = 0.4$ are the monthly expected return rate and the expected volatility of the stock, respectively;
	\item $r = 0.01$ is the monthly return of the risk-free asset;
	\item $x_0 = 100, \quad \lambda_0 = 1$ are the wealth of the insurer and the intensity of the ordinary insurance claims at the beginning;
	\item $\theta = 1$ is the factor of risk aversion in the MMV criterion;
	\item $\kappa = 0.1, \quad \kappa_r = 0.105, \quad (\iota = 0.1, \quad \iota_r = 0.12)$ are the safety loading of the insurer and reinsurer for the ordinary insurance claims (catastrophe insurance claims) respectively;
	\item $\rho=0.01, \quad \delta=0.01$ are the intensity and the decay factor of the shot noise process. These two parameters mean the frequency of the occurrence of the catastrophe and the speed at which the effects of the catastrophe recede, respectively;
	\item $k = 10000$ is the ratio of the catastrophe claims size to the catastrophe impact.
\end{itemize}

We have generated two graphs to visualize the sensitivity of the optimal strategies $u$ and $v$, with respect to changes in catastrophe-related parameters. In Figs.\ref{mmvSN.fig:plot_u} and \ref{mmvSN.fig:plot_v}, the red lines depict the optimal strategies of ordinary claims $u$ and that of catastrophe claims $v$ as functions of the surplus $x$ under the given parameter values. Meanwhile, the other colored lines represent variations in the optimal strategies when one parameter is altered at a time.

In Fig.\ref{mmvSN.fig:plot_u}, observe the green and dark green lines, which illustrate that as the frequency and intensity of catastrophes increase (indicated by larger values of $\rho$ or smaller values of $\beta_2$), the insurer adopts a more aggressive stance, leading to higher retention levels. This behavior can be attributed to the fact that with increased catastrophe frequency and intensity, the catastrophe insurance premium $\frac{(1+\iota) \rho k }{\beta_2}$ also rises, allowing the insurer to shoulder more risk.

In the same figure, the orange line shows that when the impact of a catastrophe fades more fast (corresponding to a larger $\delta$), the insurer adopts a more conservative approach with smaller retention levels. This is because the insurer's ordinary insurance premium $\frac{(1+\kappa) \lambda(t)}{\beta_1}$ also diminishes more gradually for larger $\delta$.

Fig.\ref{mmvSN.fig:plot_v} illustrates how the optimal strategy $v$ of catastrophe claims changes in response to variations in the parameters. The orange line indicates that a larger ratio results in reduced retention. Furthermore, the purple lines in both figures highlight that higher reinsurance costs lead to greater retained risks. Finally, the blue lines illustrate that increased reinsurance premium incentivize the insurer to adopt a more aggressive strategy.

\begin{figure}[H]
	\begin{minipage}[t]{0.5\linewidth}
		\centering
		\includegraphics[width=\linewidth]{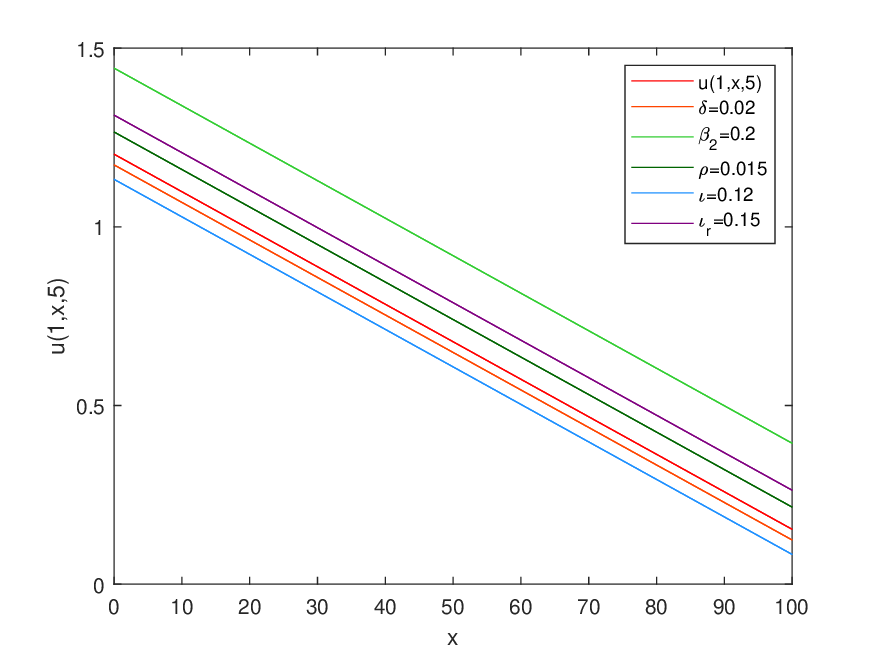}
	\end{minipage}
	\begin{minipage}[t]{0.5\linewidth}
		\centering
		\includegraphics[width=\linewidth]{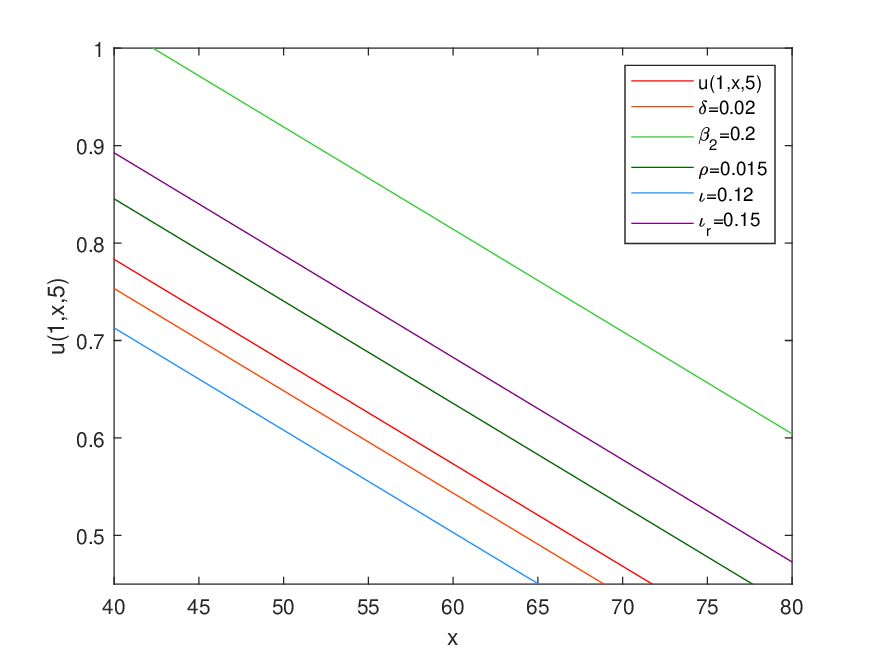}
	\end{minipage}
	\caption{The optimal retention level $u$ of the ordinary insurance when $t=1$ and $\lambda(1)=5$.}
	\label{mmvSN.fig:plot_u}
\end{figure}

\begin{figure}[H]
	\begin{minipage}[t]{0.5\linewidth}
		\centering
		\includegraphics[width=\linewidth]{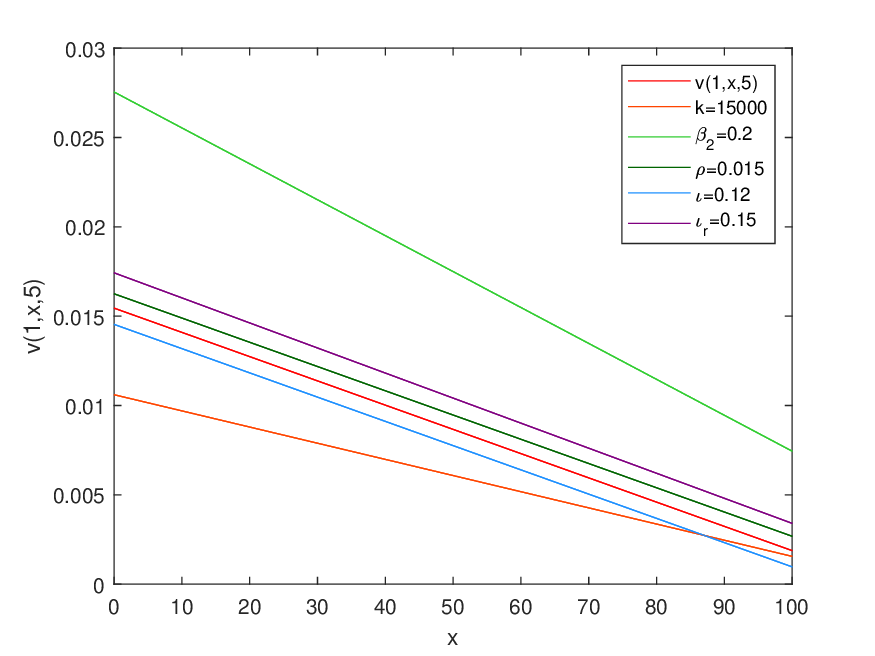}
	\end{minipage}
	\begin{minipage}[t]{0.5\linewidth}
		\centering
		\includegraphics[width=\linewidth]{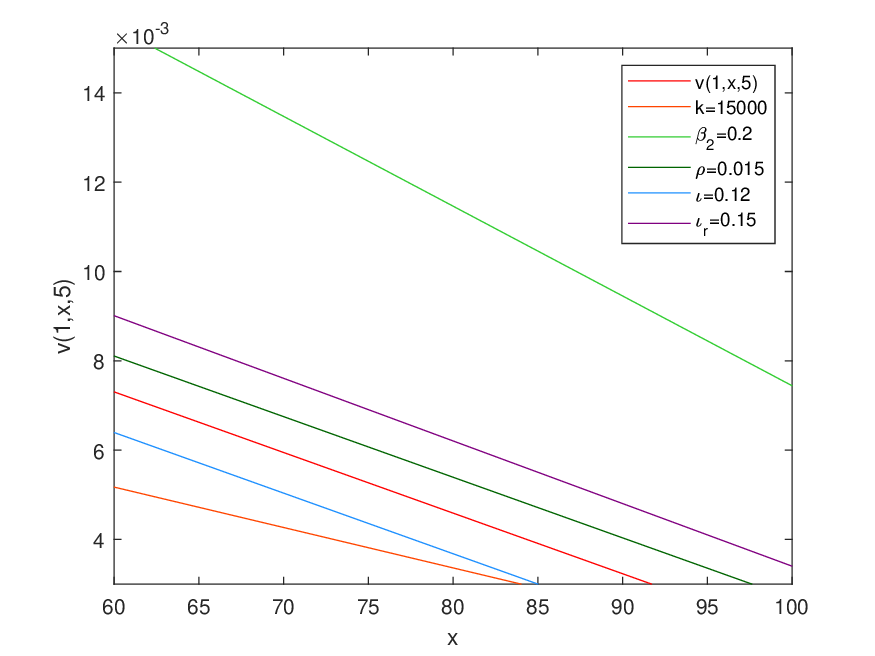}
	\end{minipage}
	\caption{The optimal retention level $v$ of the catastrophe insurance when $t=1$ and $\lambda(1)=5$.}
	\label{mmvSN.fig:plot_v}
\end{figure}

In Fig.\ref{mmvSN.fig:EF}, we plot the efficient frontiers for this problem at different time points, where the top half of the parabolas are the efficient frontiers, from which one can clearly see that when the risk (variance) is given, the expected return becomes larger over time.

\begin{figure}[H]
	\centering
	\includegraphics[width=\textwidth]{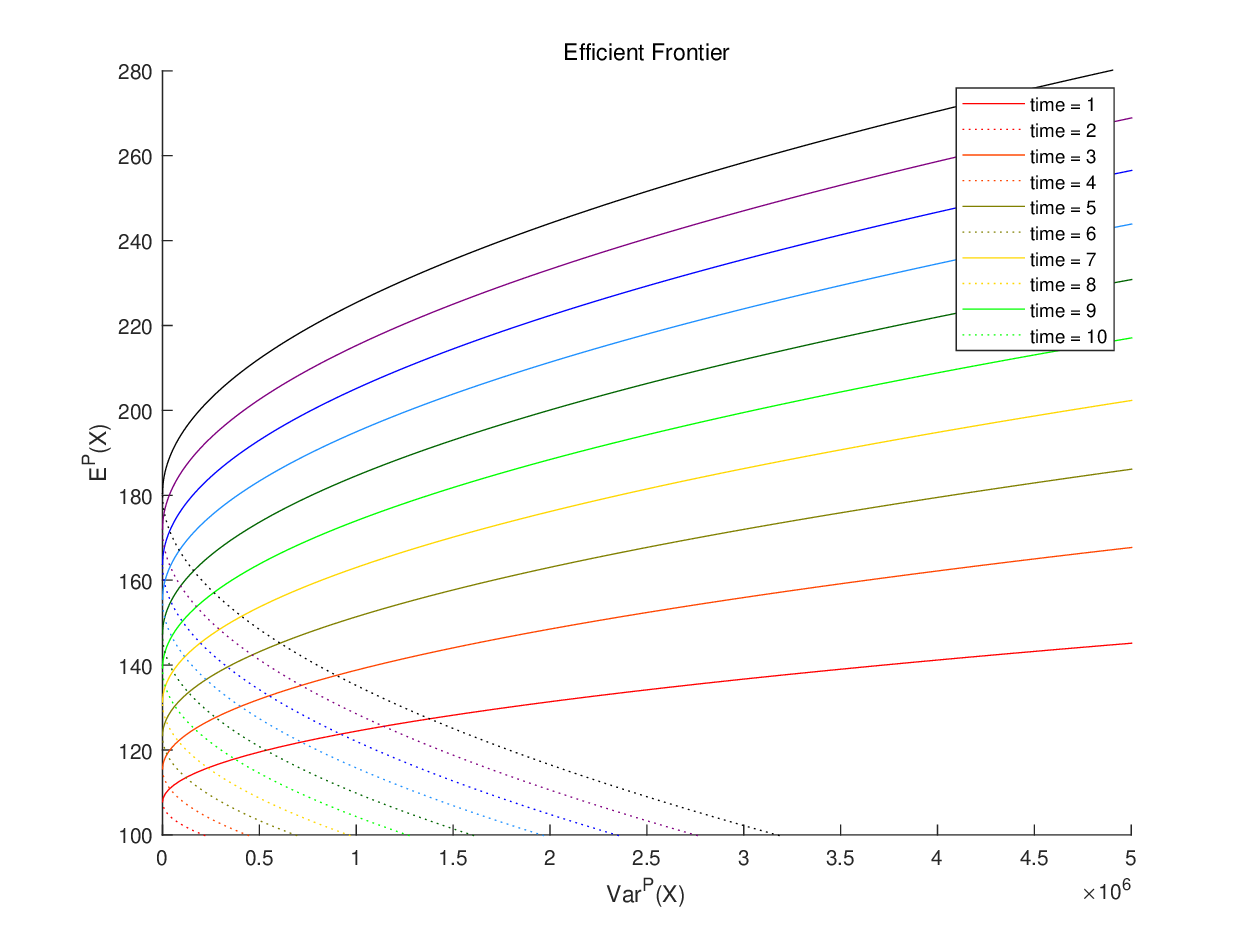}
	\caption{Efficient frontier for different time $t$.}
	\label{mmvSN.fig:EF}
\end{figure}
\fi

\section{Conclusion}

In this paper, we have studied the optimization problem of optimal investment and reinsurance for insurers facing catastrophic risks. Our study includes both jump models and diffusion approximation models, and the insurer's primary goal is to maximize their terminal surplus under the MMV criterion. We first formulate the initial control problem as an auxiliary two-player zero-sum game, then find equilibrium solutions in explicit form through dynamic programming and solving an HJBI equation. Furthermore, we have presented the efficient frontier within the MMV criterion. Looking ahead, our future research endeavors may include: 1) integrating excess-of-loss reinsurance into the catastrophe model; 2) investigating scenarios where the insurer has access to only partial information about catastrophes, limiting their ability to design tailored premiums; 3) exploring games involving multiple insurance companies or games between insurers and SPVs; and more.

~\\

\noindent {\bf Funding:} This work was supported by the National Natural Science Foundation of China 11931018 and 12271274.

\section*{Declarations}

\noindent {\bf Conflict of interest:} The authors have no relevant financial or non-financial interests to disclose.

\bibliographystyle{abbrv}
\bibliography{mono-MV}

\begin{appendices}
	
\section{Proofs of Statements in Section \ref{mmvSN.section.main_result}}

\subsection{Proof of Proposition \ref{mmvSN.lemma.solution.suppose}}
\begin{proof}
\par We suppose that the form of solution to \eqref{mmvSN.hjbi.equation} is given as follows
\begin{align}
\label{mmvSN.function.value.suppose}
W(t,x,y,\lambda) =  G(t)xy + H(t,\lambda)y^2 +I(t,\lambda)y + K(t,\lambda).
\end{align}
Substituting \eqref{mmvSN.function.value.suppose} into \eqref{mmvSN.generator} yields
\begin{align}
\nonumber \mathcal{L}_t^{a,b} W(t,x,y,\lambda) =& G_txy + H_t y^2 +I_t y + K_t + \biggl\{ r x + ( - \kappa_r + \kappa)\mu_1\lambda  \\
\nonumber &+ ( - \iota_r + \iota)k\mu_2\rho\biggr\} G(t) y  +(-\delta \lambda) ( H_{\lambda} y^2 +I_{\lambda} y + K_{\lambda} ) +  \rho \int_{\mathbb{R}_{>0}} \{  H_z y^2 + I_z y  + K_z \} F_2(dz)  \\
\nonumber &+ \pi( \mu_0 -  r ) G(t) y  + \pi  \sigma_0  y o G(t) + y^2 o^2 H(t,\lambda) \\
\nonumber &+ \kappa_ru \mu_1\lambda G(t) y + \lambda \int_{\mathbb{R}_{>0}} \{  - G(t)u zyp + H(t,\lambda)y^2 p^2 \} F_1(dz) \\
\label{mmvSN.generator.suppose} &+ \iota_r k v \mu_2\rho G(t) y + \rho \int_{\mathbb{R}_{>0}} \{ (2 H_z y + I_z -G(t)kvz)yq + H(t,\lambda+z)y^2q^2\} F_2(dz).
\end{align}

\par By differentiating \eqref{mmvSN.generator.suppose} with respect to $o$, $p$ and $q$ and letting them equal $0$, if $H(t,\lambda) > 0$ holds, the minimum of \eqref{mmvSN.generator.suppose} is attained at  
\begin{align*}
&o^* = - \frac{ G(t) \pi  \sigma_0  }{2H(t,\lambda)y}, \\
&p^*(z) = \frac{ G(t) u z}{2H(t,\lambda)y}, \\
&q^*(z) = -\frac{2 H_z y + I_z -G(t)kvz }{2H(t,\lambda+z)y}.
\end{align*}

\par We then plug $o^*$, $p^*(z)$ and $q^*(z)$ back into \eqref{mmvSN.generator.suppose} and note that 
\begin{align*}
&\rho \int_{\mathbb{R}_{>0}} \{ (2 H_z y + I_z -G(t)kvz)yq^*(z) + H(t,\lambda+z)y^2q^*(z)^2\} F_2(dz) \\
=& -2\rho  y^2 {\bf \widetilde{H}}(\frac{H_z^2}{z}) - \frac{\rho}{2} {\bf \widetilde{H}}(\frac{I_z^2}{z}) - \frac{\rho}{2} G(t)^2 k^2 v^2 {\bf \widetilde{H}}(z) - 2 \rho y {\bf \widetilde{H}}(\frac{H_zI_z}{z}) + 2 \rho G(t) y k v {\bf \widetilde{H}}(H_z) + \rho G(t) k v {\bf \widetilde{H}}(I_z).
\end{align*}
Therefore, \eqref{mmvSN.generator.suppose} becomes
\begin{align}
\nonumber &\mathcal{L}_t^{a,b^*} W(t,x,y,\lambda) = G_txy + H_t y^2 +I_t y + K_t + \biggl\{ r x + ( - \kappa_r + \kappa)\mu_1\lambda  \\
\nonumber &+ ( - \iota_r + \iota)k\mu_2\rho\biggr\} G(t) y  +(-\delta \lambda) ( H_{\lambda} y^2 +I_{\lambda} y + K_{\lambda} ) +  \rho \int_{\mathbb{R}_{>0}} \{  H_z y^2 + I_z y  + K_z \} F_2(dz)  \\
\nonumber &+ \pi( \mu_0 -  r ) G(t) y -  \frac{ \pi^2  \sigma_0 ^2 G(t)^2}{4H(t,\lambda)} + \kappa_ru \mu_1\lambda G(t) y - \lambda \frac{ G(t)^2 u^2 \sigma_1^2}{4H(t,\lambda)} \\
\label{mmvSN.generator.suppose.2} &+ \iota_r kv \mu_2\rho G(t) y  - \frac{\rho}{2} G(t)^2 k^2 v^2 {\bf \widetilde{H}}(z)  + 2 \rho G(t) y k v {\bf \widetilde{H}}(H_z) + \rho G(t) k v {\bf \widetilde{H}}(I_z) \\
\nonumber &-2\rho  y^2 {\bf \widetilde{H}}(\frac{H_z^2}{z}) - \frac{\rho}{2} {\bf \widetilde{H}}(\frac{I_z^2}{z}) - 2 \rho y {\bf \widetilde{H}}(\frac{H_zI_z}{z}),
\end{align}
where
\begin{align*}
{\bf \widetilde{H}}(f) := \int_{\mathbb{R}_{>0}} \frac{f(z) z}{2 H(t,\lambda+z)}F_2(dz).
\end{align*}

\par Similarly, by differentiating \eqref{mmvSN.generator.suppose.2} with respect to $\pi$, $u$ and $v$ and letting it equals $0$, if $G(t) > 0$ and $H(t,\lambda) > 0$ hold, the maximum of \eqref{mmvSN.generator.suppose.2} is attained at 
\begin{align*}
&\pi^* = \frac{2H(t,\lambda) ( \mu_0 -  r ) y }{ G(t)    \sigma_0 ^2}, \\
&u^* = \frac{2H(t,\lambda)\kappa_r \mu_1 y}{ G(t)   \sigma_1^2}, \\
&v^* = \frac{1}{kG(t)}\biggl\{\frac{\iota_r   \mu_2 y }{ {\bf \widetilde{H}}(z)} + \frac{2y{\bf \widetilde{H}}(H_z)}{ {\bf \widetilde{H}}(z)} + \frac{{\bf \widetilde{H}}(I_z)}{ {\bf \widetilde{H}}(z)}\biggr\}.
\end{align*}

\par By plugging $\pi^*$, $u^*$ and $v^*$ back into \eqref{mmvSN.generator.suppose.2}, we obtain
\begin{align*}
&\mathcal{L}_t^{a^*,b^*} W(t,x,y,\lambda) =   xy \biggl\{ G_t +  r G(t) \biggr\} \\
&+ y^2 \biggl\{ H_t -\delta \lambda H_{\lambda} +  \rho \int_{\mathbb{R}_{>0}} H_z F_2(dz) + \frac{H(t,\lambda) ( \mu_0 -  r )^2 }{    \sigma_0 ^2} + \frac{H(t,\lambda)  \lambda \kappa_r^2  \mu_1^2 }{ \sigma_1^2} \\
&+ \frac{\rho \iota_r^2  \mu_2^2 }{ {\bf \widetilde{H}}(z)} + \frac{2 {\bf \widetilde{H}}(H_z) \rho \iota_r  \mu_2 }{ {\bf \widetilde{H}}(z)} - \rho  \biggl\{2 {\bf \widetilde{H}}(\frac{H_z^2}{z}) - 2 \frac{{\bf \widetilde{H}}(H_z)^2}{{\bf \widetilde{H}}(z)} + \frac{\iota_r^2 \mu_2^2}{2{\bf \widetilde{H}}(z)}\biggr\}  \\
&+ y \biggl\{ I_t -\delta \lambda I_{\lambda} +  \rho \int_{\mathbb{R}_{>0}} I_z F_2(dz) + \bigg( ( - \kappa_r + \kappa)\mu_1\lambda  + ( - \iota_r + \iota) k\mu_2\rho\bigg) G(t) \\
&+ \frac{ {\bf \widetilde{H}}(I_z) \rho \iota_r  \mu_2 }{ {\bf \widetilde{H}}(z)} - \rho \biggl\{ 2 {\bf \widetilde{H}}(\frac{H_z I_z}{z}) - 2 \frac{{\bf \widetilde{H}}(H_z) {\bf \widetilde{H}}(I_z)}{{\bf \widetilde{H}}(z)} \biggr\} \\
&+\biggl\{ K_t -\delta \lambda K_{\lambda} +  \rho \int_{\mathbb{R}_{>0}} K_z F_2(dz) - \frac{\rho}{2} \biggl\{ {\bf \widetilde{H}}(\frac{I_z^2}{z}) - \frac{{\bf \widetilde{H}}(I_z)^2}{{\bf \widetilde{H}}(z)} \biggr\}   \biggr\}.
\end{align*}

\par By separation of variables, we get the partial differential equations \eqref{mmvSN.equation.function.value.coefficients} with the boundary conditions $G(T) = 1$, $H(T,\lambda) = \frac{1}{2\theta}$ and $I(T,\lambda) = K(T,\lambda) = 0$ which are derived from $W(T,x,y,\lambda) = xy + \frac{1}{2\theta}y^2$.

\end{proof}

\subsection{Proof of Lemma \ref{mmvSN.lemma.relationship_of_XY}}
\begin{proof}
	
	\par It is sufficient to prove that
	\begin{align*}
	d\biggl(X^*(t) G(t)\biggr) = - 2 d \bigg( Y^*(t) H(t,\lambda(t)) \bigg) - d I(t,\lambda(t)).
	\end{align*}

	\par Substituting \eqref{mmvSN.strategy.saddlepoint.equation} into \eqref{mmvSN.dynamic.x} and \eqref{mmvSN.dynamic.y} gives
	\begin{align*}
	dX^*(t) =&  \bigg( r X(t)- (\kappa_r - \kappa)\mu_1\lambda(t)   - ( \iota_r - \iota)k\mu_2\rho\bigg) dt \\
	&+  \frac{2H(t,\lambda(t)) ( \mu_0 -  r )^2 Y^*(t) }{ G(t)    \sigma_0 ^2}  dt +  \lambda(t) \frac{2H(t,\lambda(t))\kappa_r^2 \mu_1^2 Y^*(t)}{ G(t)   \sigma_1^2} dt  \\
	&+ \iota_r \mu_2\rho \frac{1}{G(t)}\biggl\{\frac{\iota_r  \mu_2 Y^*(t) }{ {\bf \widetilde{H}}(z)} + \frac{2 Y^*(t) {\bf \widetilde{H}}(H_z) }{ {\bf \widetilde{H}}(z)} + \frac{{\bf \widetilde{H}}(I_z)}{ {\bf \widetilde{H}}(z)}\biggr\}  dt \\
	&+ \frac{2H(t,\lambda(t)) ( \mu_0 -  r ) Y^*(t) }{ G(t)    \sigma_0 }    dW_0(t) -  \int_{\mathbb{R}_{>0}} z \frac{2H(t,\lambda(t))\kappa_r \mu_1 Y^*(t-)}{ G(t)   \sigma_1^2} \widetilde{N}_1(dt,dz) \\
	&- \int_{\mathbb{R}_{>0}} z \frac{1}{G(t)}\biggl\{\frac{\iota_r  \mu_2 Y^*(t-) }{ {\bf \widetilde{H}}(z)} + \frac{2Y^*(t-) {\bf \widetilde{H}}(H_z) }{ {\bf \widetilde{H}}(z)} + \frac{{\bf \widetilde{H}}(I_z)}{ {\bf \widetilde{H}}(z)}\biggr\}  \widetilde{N}_2(dt,dz), \\
	\end{align*}
	and
	\begin{align*}
	dY^*(t) =& -Y^*(t) \frac{  \mu_0 -  r   }{\sigma_0} dW_0(t) + \int_{\mathbb{R}_{>0}} Y^*(t-) \frac{ \kappa_r \mu_1 z}{\sigma_1^2} \widetilde{N}_1(dt,dz) \\
	&- \int_{\mathbb{R}_{>0}} \frac{1 }{2H(t,\lambda(t)+z)} \biggl\{\bigg(2  ({\bf 1}- \frac{z{\bf \widetilde{H}} }{ {\bf \widetilde{H}}(z)})(H_z) -  \frac{\iota_r   \mu_2  z}{ {\bf \widetilde{H}}(z)}\bigg) Y^*(t-) +  ({\bf 1}- \frac{z{\bf \widetilde{H}} }{ {\bf \widetilde{H}}(z)})(I_z)\biggr\} \widetilde{N}_2(dt,dz).
	\end{align*}
	
	\par By applying It\^{o}'s lemma to $H(t,\lambda)$ and $I(t,\lambda)$, we have
	\begin{align}
	\label{mmvSN.SDE.I}
	d I(t,\lambda(t)) =& \bigg( I_t - \delta \lambda(t) I_{\lambda} + \rho \int_{\mathbb{R}_{>0}} I_z F_2(dz) \bigg) dt + \int_{\mathbb{R}_{>0}} I_z \widetilde{N}_2(dt,dz),
	\end{align}
	and
	\begin{align}
	\label{mmvSN.SDE.H}
	d H(t,\lambda(t)) = \bigg( H_t - \delta \lambda(t) H_{\lambda} + \rho \int_{\mathbb{R}_{>0}} H_z F_2(dz) \bigg) dt + \int_{\mathbb{R}_{>0}} H_z \widetilde{N}_2(dt,dz).
	\end{align}
	
	\par By substituting \eqref{mmvSN.equation.function.value.coefficients} into \eqref{mmvSN.SDE.I} and \eqref{mmvSN.SDE.H},
	it follows that
	\begin{align*}
	d I(t,\lambda(t)) =&  \bigg( (  \kappa_r - \kappa)\mu_1\lambda  + ( \iota_r - \iota) k \mu_2\rho\bigg) G(t) dt - \frac{ {\bf \widetilde{H}}(I_z) \rho \iota_r \mu_2 }{ {\bf \widetilde{H}}(z)} dt \\
	&+ \rho \biggl\{ 2 {\bf \widetilde{H}}(\frac{H_z I_z}{z}) - 2 \frac{{\bf \widetilde{H}}(H_z) {\bf \widetilde{H}}(I_z)}{{\bf \widetilde{H}}(z)} \biggr\}  dt + \int_{\mathbb{R}_{>0}} I_z \widetilde{N}_2(dt,dz),
	\end{align*}
	and
	\begin{align*}
	&d \bigg( Y^*(t) H(t,\lambda(t)) \bigg) \\
	=& H(t,\lambda(t)) d Y^*(t) + Y^*(t) d H(t,\lambda(t)) +  d \bigg[H(\cdot,\lambda(\cdot)),Y\bigg](t) \\
	=& - H(t,\lambda(t)) Y^*(t) \frac{  \mu_0 -  r   }{\sigma_0} dW_0(t) + \int_{\mathbb{R}_{>0}} H(t,\lambda(t-)) Y^*(t-) \frac{ \kappa_r \mu_1 z}{\sigma_1^2} \widetilde{N}_1(dt,dz) \\
	&- \int_{\mathbb{R}_{>0}} \frac{H(t,\lambda(t))}{2H(t,\lambda(t)+z)} \biggl\{\bigg(2  ({\bf 1}- \frac{z{\bf \widetilde{H}} }{ {\bf \widetilde{H}}(z)})(H_z) -  \frac{\iota_r   \mu_2  z}{ {\bf \widetilde{H}}(z)}\bigg) Y^*(t-) +  ({\bf 1}- \frac{z{\bf \widetilde{H}} }{ {\bf \widetilde{H}}(z)})(I_z)\biggr\} \widetilde{N}_2(dt,dz) \\
	&+ \bigg( H_t - \delta \lambda(t) H_{\lambda} + \rho \int_{\mathbb{R}_{>0}} H_z F_2(dz) \bigg) Y^*(t) dt + Y^*(t) \int_{\mathbb{R}_{>0}} H_z \widetilde{N}_2(dt,dz) \\
	&- \int_{\mathbb{R}_{>0}} \frac{H_z }{2H(t,\lambda(t)+z)} \biggl\{\bigg(2  ({\bf 1}- \frac{z{\bf \widetilde{H}} }{ {\bf \widetilde{H}}(z)})(H_z) -  \frac{\iota_r   \mu_2  z}{ {\bf \widetilde{H}}(z)}\bigg) Y^*(t-) +  ({\bf 1}- \frac{z{\bf \widetilde{H}} }{ {\bf \widetilde{H}}(z)})(I_z)\biggr\} N_2(dt,dz) \\ \\
	=& - H(t,\lambda(t)) Y^*(t) \frac{  \mu_0 -  r   }{\sigma_0} dW_0(t) + \int_{\mathbb{R}_{>0}} H(t,\lambda(t-)) Y^*(t-) \frac{ \kappa_r \mu_1 z}{\sigma_1^2} \widetilde{N}_1(dt,dz) \\
	&+ \bigg( H_t - \delta \lambda(t) H_{\lambda} + \rho \int_{\mathbb{R}_{>0}} H_z F_2(dz) \bigg) Y^*(t) dt + Y^*(t) \int_{\mathbb{R}_{>0}} H_z \widetilde{N}_2(dt,dz) \\
	&- \int_{\mathbb{R}_{>0}} \frac{1 }{2} \biggl\{\bigg(2  ({\bf 1}- \frac{z{\bf \widetilde{H}} }{ {\bf \widetilde{H}}(z)})(H_z) -  \frac{\iota_r   \mu_2  z}{ {\bf \widetilde{H}}(z)}\bigg) Y^*(t-) +  ({\bf 1}- \frac{z{\bf \widetilde{H}} }{ {\bf \widetilde{H}}(z)})(I_z)\biggr\} \widetilde{N}_2(dt,dz) \\
	&- \rho \int_{\mathbb{R}_{>0}} \frac{H_z }{2H(t,\lambda(t)+z)} \biggl\{\bigg(2  ({\bf 1}- \frac{z{\bf \widetilde{H}} }{ {\bf \widetilde{H}}(z)})(H_z) -  \frac{\iota_r   \mu_2  z}{ {\bf \widetilde{H}}(z)}\bigg) Y^*(t-) +  ({\bf 1}- \frac{z{\bf \widetilde{H}} }{ {\bf \widetilde{H}}(z)})(I_z)\biggr\} F_2(dz)dt \\
	=& - H(t,\lambda(t)) Y^*(t) \frac{  \mu_0 -  r   }{\sigma_0} dW_0(t) + \int_{\mathbb{R}_{>0}} H(t,\lambda(t-)) Y^*(t-) \frac{ \kappa_r \mu_1 z}{\sigma_1^2} \widetilde{N}_1(dt,dz) \\
	&+  \frac{1 }{2}\int_{\mathbb{R}_{>0}} \biggl\{   2\frac{z{\bf \widetilde{H}}(H_z) }{ {\bf \widetilde{H}}(z)}Y^*(t-) +  \frac{\iota_r   \mu_2  z}{ {\bf \widetilde{H}}(z)} Y^*(t-) - I_z + \frac{z{\bf \widetilde{H}}(I_z) }{ {\bf \widetilde{H}}(z)}\biggr\} \widetilde{N}_2(dt,dz) \\
	&- \bigg( \frac{H(t,\lambda(t)) ( \mu_0 -  r )^2 }{    \sigma_0 ^2} + \frac{H(t,\lambda(t)) \lambda(t) \kappa_r^2  \mu_1^2 }{ \sigma_1^2} +  \frac{\rho \iota_r^2  \mu_2^2 }{ 2 {\bf \widetilde{H}}(z)} + \frac{ {\bf \widetilde{H}}(H_z) \rho \iota_r  \mu_2 }{ {\bf \widetilde{H}}(z)} \bigg) Y^*(t) dt\\
	&- \rho \biggl\{ {\bf \widetilde{H}}(\frac{H_z I_z}{z}) -  \frac{{\bf \widetilde{H}}(H_z){\bf \widetilde{H}}(I_z)}{{\bf \widetilde{H}}(z)} \biggr\} dt.
	\end{align*}
	
	\par The last equality is derived because
	\begin{align*}
	&\rho \int_{\mathbb{R}_{>0}} \frac{H_z}{2H(t,\lambda(t)+z)} \biggl\{\bigg(2  ({\bf 1}- \frac{z{\bf \widetilde{H}} }{ {\bf \widetilde{H}}(z)})(H_z) -  \frac{\iota_r   \mu_2  z}{ {\bf \widetilde{H}}(z)}\bigg) Y^*(t) +  ({\bf 1}- \frac{z{\bf \widetilde{H}} }{ {\bf \widetilde{H}}(z)})(I_z)\biggr\} F_2(dz) \\
	=& \rho \biggl\{ 2 Y^*(t) {\bf \widetilde{H}}(\frac{H_z^2}{z}) - 2 Y^*(t) \frac{{\bf \widetilde{H}}(H_z)^2}{{\bf \widetilde{H}}(z)} - Y^*(t) \frac{\iota_r \mu_2 {\bf \widetilde{H}}(H_z)}{{\bf \widetilde{H}}(z)} +  {\bf \widetilde{H}}(\frac{H_z I_z}{z}) -  \frac{{\bf \widetilde{H}}(H_z){\bf \widetilde{H}}(I_z)}{{\bf \widetilde{H}}(z)}\biggr\}.
	\end{align*}
	
	\par Therefore,
	\begin{align*}
	&d\biggl(X^*(t) G(t)\biggr) = G(t)dX^*(t) - G(t) r X^*(t) dt    \\
	=&  - \bigg(  (\kappa_r - \kappa)\mu_1\lambda(t)   + ( \iota_r - \iota)k\mu_2\rho\bigg) G(t) dt \\
	&+ \frac{2H(t,\lambda(t)) ( \mu_0 -  r )^2 Y^*(t) }{   \sigma_0 ^2}  dt +  \lambda(t) \frac{2H(t,\lambda(t))\kappa_r^2 \mu_1^2 Y^*(t)}{  \sigma_1^2} dt  \\
	&+ \iota_r \mu_2\rho \biggl\{\frac{\iota_r  \mu_2 Y^*(t) }{ {\bf \widetilde{H}}(z)} + \frac{2 Y^*(t) {\bf \widetilde{H}}(H_z) }{ {\bf \widetilde{H}}(z)} + \frac{{\bf \widetilde{H}}(I_z)}{ {\bf \widetilde{H}}(z)}\biggr\}  dt \\
	&+ \frac{2H(t,\lambda(t)) ( \mu_0 -  r ) Y^*(t) }{    \sigma_0 }    dW_0(t) -  \int_{\mathbb{R}_{>0}} z \frac{2H(t,\lambda(t-))\kappa_r \mu_1 Y^*(t-)}{   \sigma_1^2} \widetilde{N}_1(dt,dz) \\
	&- \int_{\mathbb{R}_{>0}} z \biggl\{\frac{\iota_r  \mu_2 Y^*(t-) }{ {\bf \widetilde{H}}(z)} + \frac{2Y^*(t-) {\bf \widetilde{H}}(H_z) }{ {\bf \widetilde{H}}(z)} + \frac{{\bf \widetilde{H}}(I_z)}{ {\bf \widetilde{H}}(z)}\biggr\}  \widetilde{N}_2(dt,dz) \\
	=& - 2 d \bigg( Y^*(t) H(t,\lambda(t)) \bigg) - d I(t,\lambda(t)).
	\end{align*}
	
\end{proof}

\subsection{Proof of Lemma \ref{mmvSN.lemma.coefficients}}

\begin{proof}

\par 

It is easy to obtain that 
\begin{align}
\label{mmvSN.equation.function.value.coefficients.suppose.G}
G(t) = e^{r(T-t)}.
\end{align}

\par Let us recall that $H(t,\lambda)$ satisfies
\begin{align*}
H_t -\delta \lambda H_{\lambda} +  \rho \int_{\mathbb{R}_{>0}} H_z F_2(dz) + \frac{H(t,\lambda) ( \mu_0 -  r )^2 }{    \sigma_0 ^2} + \frac{H(t,\lambda)  \lambda \kappa_r^2  \mu_1^2 }{ \sigma_1^2} + \frac{\rho \bigg(\iota_r  \mu_2 + 2 {\bf \widetilde{H}}(H_z) \bigg)^2 }{ 2 {\bf \widetilde{H}}(z)}  - 2 \rho  {\bf \widetilde{H}}(\frac{H_z^2}{z})  = 0.
\end{align*}
\par We hypothesize that it takes the following form
\begin{align}
\label{mmvSN.equation.function.value.coefficients.suppose.H}
H(t,\lambda) = \frac{1}{2 \theta}e^{\eta(t) \lambda + \zeta(t)}.
\end{align}

\par By substituting \eqref{mmvSN.equation.function.value.coefficients.suppose.H} into \eqref{mmvSN.equation.function.value.coefficients} and by separating the variables,
we obtain the following two ordinary differential equations
\begin{align*}
\begin{cases}
&\eta'(t)  -\delta  \eta(t)  +\frac{\kappa_r^2  \mu_1^2 }{ \sigma_1^2} = 0, \\
&\zeta'(t)   + \frac{ ( \mu_0 -  r )^2  }{    \sigma_0 ^2} \\
&+ \rho \bigg\{ \frac{((\iota_r+1)\mu_2 - \int_{\mathbb{R}_{>0}} z e^{-\eta(t) z} F_2(dz))^2}{\int_{\mathbb{R}_{>0}}  z^2 e^{-\eta(t) z} F_2(dz)} - \int_{\mathbb{R}_{>0}}  e^{-\eta(t) z} F_2(dz) + 1\bigg\} = 0.
\end{cases}
\end{align*}
The solutions are given as follows
\begin{align*}
\begin{cases}
\eta(t) =& \frac{\kappa_r^2   \mu_1^2}{\delta \sigma_1^2}(1-e^{-\delta(T-t)}), \\
\zeta(t) =& \rho \int_t^T  \biggl\{ \frac{((\iota_r+1)\mu_2 - \int_{\mathbb{R}_{>0}} z e^{-\eta(s) z} F_2(dz))^2}{\int_{\mathbb{R}_{>0}}  z^2 e^{-\eta(s) z} F_2(dz)} - \int_{\mathbb{R}_{>0}}  e^{-\eta(s) z} F_2(dz)  \biggr\} ds \\
& + (\rho  + \frac{ ( \mu_0 -  r )^2  }{    \sigma_0 ^2})(T-t).
\end{cases}
\end{align*}


\par For $I(t,\lambda)$, we suppose it has the following form
\begin{align}
\label{mmvSN.equation.function.value.coefficients.suppose.I}
I(t,\lambda) = \alpha(t) \lambda + \beta(t).
\end{align}

\par Substituting \eqref{mmvSN.equation.function.value.coefficients.suppose.G} and \eqref{mmvSN.equation.function.value.coefficients.suppose.I} into \eqref{mmvSN.equation.function.value.coefficients} gives
%
\begin{align*}
\begin{cases}
\alpha'(t) - \delta \alpha(t) - (\kappa_r - \kappa)\mu_1 e^{r(T-t)} = 0, \\
\beta'(t) + \rho \alpha(t) \mu_2 (\iota_r + 1)  - ( \iota_r - \iota) k\mu_2\rho e^{r(T-t)} = 0,
\end{cases}
\end{align*}
with solutions as follows
\begin{align*}
\begin{cases}
\alpha(t) = - (\kappa_r - \kappa)\mu_1 \int_t^T e^{-\delta(s-t)} e^{r(T-s)} ds , \\
\beta(t) = \rho \int_t^T  \biggl\{\alpha(s) \mu_2 (\iota_r + 1) - ( \iota_r - \iota) k\mu_2 e^{r(T-s)}\biggr\} ds,
\end{cases}
\end{align*}
that is,
\begin{align*}
\begin{cases}
\alpha(t) = \frac{(\kappa_r - \kappa)\mu_1}{\delta + r} (e^{- \delta(T-t)} - e^{r(T-t)})  , \\
\beta(t) = \frac{\rho (\kappa_r - \kappa)  (\iota_r + 1) \mu_1 \mu_2}{\delta + r} \bigg( \frac{1}{\delta}(1-e^{-\delta(T-t)})+\frac{1}{r}(1-e^{r(T-t)}) \bigg)  - \frac{\rho ( \iota_r - \iota) k\mu_2}{r} (1-e^{r(T-t)}).
\end{cases}
\end{align*}

\par Since 
\begin{align*}
\frac{\rho}{2} \biggl\{ {\bf \widetilde{H}}(\frac{I_z^2}{z}) - \frac{{\bf \widetilde{H}}(I_z)^2}{{\bf \widetilde{H}}(z)} \biggr\} = 0,
\end{align*}
and
\begin{align*}
K(T,\lambda) = 0,
\end{align*}
we have
\begin{align*}
K(t,\lambda) = 0, \quad \forall \quad t \in [0,T].
\end{align*}

\end{proof}

\subsection{Proof of Theorem \ref{mmvSN.theorem.solution.explicit}}

\begin{proof}
	We first prove that the strategies $a^*(t)$ and $b^*(t,z)$ defined by \eqref{mmvSN.strategy.optimal.a} and \eqref{mmvSN.strategy.optimal.b} belong to $\mathcal{A}[s,T]$ and $\mathcal{B}[s,T]$, respectively. By Theorem 13 (2) and Lemma 7 (2) of \cite{kabanov1979absolute}, the SDE \eqref{mmvSN.dynamic.y} has a unique solution in the class of nonnegative local martingales which we also denote by $Y$. Since $b^*(t,z)$ is deterministic, bounded and satisfies
	\begin{align}
	p^*(t,z) \geq -1, \\
	q^*(t,z) \geq -1,
	\end{align}
	$Y$ is a square integrable martingale. Consequently, $b^*(t,z) \in \mathcal{B}[s,T]$.
	\par On the other hand, since
	\begin{equation*}
	\sup_{s \leq t \leq T} \mathbb{E}^PY(t)^2 < +\infty,
	\end{equation*}
	it holds that
	\begin{equation*}
	\mathbb{E}^P\left[ \int_s^T \pi^*(t,x,y,\lambda)^2 + u^*(t,x,y,\lambda)^2 + v^*(t,x,y,\lambda)^2dt\right] < +\infty,
	\end{equation*}
	which proves that $a^*(t) \in \mathcal{A}[s,T]$.
	\par We then verify that \eqref{mmvSN.value.function.W} is the value function of Problem \eqref{obj.3}. By applying It{\^o}'s lemma to $W(t,x,y,\lambda)$, we have
	\begin{align*}
	&W(T,X(T),Y(T),\lambda(T)) = W(s,x,y,\lambda) + \int_s^T \mathcal{L}_t^{a^*,b^*} W(t,X(t),Y(t),\lambda(t)) dt \\
	&+ \int_s^T \pi^*(t) \sigma_0 \frac{\partial W}{\partial x} d W_0(s) + \int_s^T Y(t) o^*(t) \frac{\partial W}{\partial y} d W_0(s)\\
	&+ \int_s^T \int_{\mathbb{R}_{>0}} W(t,X(t-)-u^*(t)z,Y(t-)(1+p^*(t,z)),\lambda(t-))-W(t,X(t-),Y(t-),\lambda(t-)) \widetilde{N}_1(dt,dz) \\
	&+ \int_s^T \int_{\mathbb{R}_{>0}} W(t,X(t-)-kv^*(t)z,Y(t-)(1+q^*(t,z)),\lambda(t-)+z)-W(t,X(t-),Y(t-),\lambda(t-)) \widetilde{N}_2(dt,dz).
	\end{align*}
	\par Let $\tau_B = \inf\bigg\{t > 0: |\pi^*(t) \sigma_0 \frac{\partial W}{\partial x}| \geq B \bigg\} \wedge \inf\bigg\{t > 0: |Y(t) o^*(t)| \geq B \bigg\} \\
	\wedge\inf\bigg\{t > 0: \int_s^t \int_{\mathbb{R}_{>0}} |W(t,X(r-)-u(r)z,Y(r-)(1+p^*(r,z)),\lambda(t-))| \lambda(t) F_1(dz)dr \geq B \bigg\} \\
	\wedge \inf\bigg\{t > 0: \int_s^t \int_{\mathbb{R}_{>0}} |W(t,X(r-)-u(r)z,Y(r-)(1+q^*(r,z)),\lambda(t-)+z)| \rho F_2(dz)dr \geq B \bigg\}$. We have
	\begin{align*}
	&\mathbb{E}^P \left[ W(T\vee \tau_B,X(T\vee \tau_B),Y(T\vee \tau_B),\lambda(T\vee \tau_B)) \right] \\
	&= W(s,x,y,\lambda) + \mathbb{E}^P \int_s^{T \vee \tau_B} \mathcal{L}_t^{a^*,b^*} W(t,X(t),Y(t),\lambda(t)) dt.
	\end{align*}
	\par Since $W$ satisfies the HJBI equation, we obtain that
	\begin{align*}
	\begin{cases}
	&\mathbb{E}^P \left[ W(T\vee \tau_B,X^{a^*}(T\vee \tau_B),Y^{b^*}(T\vee \tau_B),\lambda(T\vee \tau_B)) \right] = W(s,x,y,\lambda), \\
	&\mathbb{E}^P \left[ W(T\vee \tau_B,X^{a^*}(T\vee \tau_B),Y^{b}(T\vee \tau_B),\lambda(T\vee \tau_B)) \right] \geq W(s,x,y,\lambda), \\
	&\mathbb{E}^P \left[ W(T\vee \tau_B,X^{a}(T\vee \tau_B),Y^{b^*}(T\vee \tau_B),\lambda(T\vee \tau_B)) \right] \leq W(s,x,y,\lambda).
	\end{cases}
	\end{align*}
	\par By letting $B \to \infty$, we obtain
	\begin{equation*}
	J^{a,b^*}(s,x,y,\lambda) \leq J^{a^*,b^*}(s,x,y) = W(s,x,y,\lambda) \leq J^{a^*,b}(s,x,y,\lambda).
	\end{equation*}
	\par Consequently,
	\begin{equation*}
	W(s,x,y,\lambda) = \sup_{(\pi,u,v) \in \mathcal{A}[s,T]} \left ( \inf_{ (o,p,q) \in \mathcal{B}[s,T] } J^{a,b}(s,x,y,\lambda) \right ) = \inf_{(o,p,q) \in \mathcal{B}[s,T] } \left ( \sup_{(\pi,u,v) \in \mathcal{A}[s,T]} J^{a,b}(s,x,y,\lambda) \right ),
	\end{equation*}
	and $(a^*(t), b^*(t,z))$ is a Nash equilibrium of Problem \eqref{obj.3}.
\end{proof}

\subsection{Proof of Theorem \ref{mmfSN.theorem.efficient_frontier}}
\begin{proof}
\par By Theorem \ref{mmvSN.lemma.relationship_of_XY} and Lemma \ref{mmvSN.lemma.coefficients}, to obtain $\mathbb{V}ar_{s,x,y,\lambda}^PX^*(t)$, we only need to compute the values of
\begin{align*}
\begin{cases}
\mathbb{E}_{s,x,y,\lambda}^P \lambda(t), \\
\mathbb{E}_{s,x,y,\lambda}^P \lambda(t)^2, \\
\mathbb{E}_{s,x,y,\lambda}^P [Y^*(t) H(t,\lambda(t))], \\
\mathbb{E}_{s,x,y,\lambda}^P [Y^*(t)^2 H(t,\lambda(t))^2], \\
\mathbb{E}_{s,x,y,\lambda}^P [Y^*(t) H(t,\lambda(t)) \lambda(t)].
\end{cases}
\end{align*}


%
%

\par 1) $\mathbb{E}_{s,x,y,\lambda}^P \lambda(t)$ and $\mathbb{E}_{s,x,y,\lambda}^P \lambda(t)^2$: By Feyman-Kac theorem, $\mathbb{E}_{s,x,y,\lambda}^P \lambda(t)$ and $\mathbb{E}_{s,x,y,\lambda}^P \lambda(t)^2$ are the solutions of the following PDE
\begin{align*}
L_{s} - \delta \lambda L_{\lambda} + \rho \int_{\mathbb{R}_{>0}} L_{z}  F_2(dz)  = 0
\end{align*}
with terminal values $L(t,\lambda) = \lambda$ and $L(t,\lambda) = \lambda^2$, respectively. It is easy to obtain that
\begin{align*}
\begin{cases}
\mathbb{E}_{s,x,y,\lambda}^P \lambda(t) = e^{-\delta(t-s)}\lambda + \frac{\rho \mu_2}{\delta} (1 - e^{-\delta(t-s)}), \\
\mathbb{E}_{s,x,y,\lambda}^P \lambda(t)^2 = \bigg( e^{-\delta(t-s)}\lambda + \frac{\rho \mu_2}{\delta} (1 - e^{-\delta(t-s)}) \bigg)^2 + \frac{\rho \sigma_2^2}{2 \delta} (1 - e^{-2\delta(t-s)}).
\end{cases}
\end{align*}


\par 2) $\mathbb{E}_{s,x,y,\lambda}^P [ Y^*(t) H(t,\lambda(t)) ]$: To get this value, we first need to find the SDE that $H(t,\lambda(t))$ satisfies. By applying It{\^o}'s lemma to $H(t,\lambda(t))$, it follows that
\begin{align*}
d H(t,\lambda(t)) = \bigg( H_t - \delta \lambda(t) H_{\lambda} + \rho \int_{\mathbb{R}_{>0}} H_z F_2(dz) \bigg) dt + \int_{\mathbb{R}_{>0}} H_z \widetilde{N}_2(dt,dz).
\end{align*}
\par Note that
\begin{align*}
\frac{1}{2  H(t,\lambda)}\bigg(\frac{\iota_r  \mu_2 }{  {\bf \widetilde{H}}(z)} + \frac{2 {\bf \widetilde{H}}(H_z) }{  {\bf \widetilde{H}}(z)}\bigg) = \phi(t),
\end{align*}
and by \eqref{mmvSN.equation.function.value.coefficients}, we have
\begin{align*}
d H(t,\lambda(t)) =& -\bigg( \frac{H(t,\lambda(t)) ( \mu_0 -  r )^2 }{    \sigma_0 ^2} + \frac{H(t,\lambda(t))  \lambda(t) \kappa_r^2  \mu_1^2 }{ \sigma_1^2} + 2 \rho \bigg( H(t,\lambda(t))^2 \phi(t)^2 {\bf \widetilde{H}}(z) -  {\bf \widetilde{H}}(\frac{H_z^2}{z})\bigg) \bigg) dt \\
&+ \int_{\mathbb{R}_{>0}} H_z \widetilde{N}_2(dt,dz).
\end{align*}

\par Again by It{\^o}'s lemma, we have
\begin{align*}
d Y^*(t) H(t,\lambda(t)) 
=& - Y^*(t) H(t,\lambda(t)) \biggl\{  \frac{ ( \mu_0 -  r )^2 }{    \sigma_0 ^2} + \frac{\lambda(t) \kappa_r^2  \mu_1^2 }{ \sigma_1^2} + \rho \iota_r \mu_2 \phi(t)\biggr\}dt \\
&+ Y^*(t) H(t,\lambda(t)) \biggl\{ - \frac{  \mu_0 -  r   }{\sigma_0} dW_0(t) + \int_{\mathbb{R}_{>0}} \frac{ \kappa_r \mu_1 z}{\sigma_1^2} \widetilde{N}_1(dt,dz) + \int_{\mathbb{R}_{>0}} \phi(t) z \widetilde{N}_2(dt,dz) \biggr\}.
\end{align*}

\par Denote by
\begin{align*}
U_1(t) =&  \frac{Y^*(t) H(t,\lambda(t))}{y H(s,\lambda)}, \\
A_1(t) =& -  \int_s^t \biggl\{  \frac{ ( \mu_0 -  r )^2 }{    \sigma_0 ^2} + \frac{\lambda(u) \kappa_r^2  \mu_1^2 }{ \sigma_1^2} + \rho \iota_r \mu_2 \phi(u)\biggr\}du, \\
M_1(t) =&  - \int_s^t \frac{  \mu_0 -  r   }{\sigma_0} dW_0(u) + \int_s^t \int_{\mathbb{R}_{>0}} \frac{ \kappa_r \mu_1 z}{\sigma_1^2} \widetilde{N}_1(du,dz) + \int_s^t \int_{\mathbb{R}_{>0}} \phi(u) z \widetilde{N}_2(du,dz).
\end{align*}
Then $U_1(t)$ satisfies
\begin{align*}
d U_1(t) = U_1(t) d A_1(t) + U_1(t) d M_1(t), \quad U_1(s) = 1.
\end{align*}
\par Note that $e^{-A_1(t)  } U_1(t)$ is a stochastic exponential satisfying
\begin{align*}
   e^{-A_1(t)  } U_1(t)  =  1 + \int_s^t e^{-A_1(u) } U_1(u) d M_1(u),
\end{align*}
we can define a new probability measure $P_1$ by
\begin{align*}
  \frac{d P_1}{d P} \bigg|_{\mathcal{F}_t} = e^{-A_1(t) } U_1(t).
\end{align*}
Under the probability measure $P_1$, the following stochastic measure is a martingale
\begin{align*}
\widetilde{N}_2^{P_1}(dt,dz) = \widetilde{N}_2(dt,dz) - \phi(t)z \rho F_2(dz) dt,
\end{align*}
such that
\begin{align*}
\mathbb{E}_{s,x,y,\lambda}^{P_1} N_2(t) = \int_s^t \int_{\mathbb{R}_{>0}} (\phi(u)z +1) \rho F_2(dz) du.
\end{align*}
Now the representation of $\lambda$ under the new probability $P_1$ is as follows
\begin{align*}
d \lambda(t) = ( -\delta \lambda(t) + \rho \mu_2 + \rho \sigma_2^2 \phi(t)) dt + \int_{\mathbb{R}_{>0}} z \widetilde{N}_2^{P_1}(dt,dz).
\end{align*}

\par By Feyman-Kac theorem,
\begin{align*}
L^{YH}(s,\lambda) :=  \mathbb{E}_{s,x,y,\lambda}^{P_1} e^{ - \int_s^t \{  \frac{ ( \mu_0 -  r )^2 }{    \sigma_0 ^2} + \frac{\lambda(u) \kappa_r^2  \mu_1^2 }{ \sigma_1^2} + \rho \iota_r \mu_2 \phi(u) \}du},
\end{align*}
is the unique probability solution of the following PDE,
\begin{align}
\label{mmvSN.equation.L.yh}
\begin{cases}
L^{YH}_{s} - \delta \lambda L^{YH}_{\lambda} + \rho \int_{\mathbb{R}_{>0}} L^{YH}_{z} (\phi(s)z + 1) F_2(dz) -  \biggl\{  \frac{ ( \mu_0 -  r )^2 }{    \sigma_0 ^2} + \frac{\lambda \kappa_r^2  \mu_1^2 }{ \sigma_1^2} + \rho \iota_r \mu_2 \phi(s)\biggr\}  L^{YH}(s,\lambda) = 0, \\
L^{YH}(t,\lambda) = 1.
\end{cases}
\end{align}
We solve it to obtain
\begin{align}
\label{mmvSN.equation.L.yh.solution}
L^{YH}(s,\lambda) = e^{\psi_1(s,t) \lambda + \psi_2(s,t)} e^{-\eta(s) \lambda - \zeta(s)} = \frac{1}{2\theta} e^{\psi_1(s,t) \lambda + \psi_2(s,t)} H(s,\lambda)^{-1},
\end{align}
where
\begin{align*}
\begin{cases}
\psi_1(s,t) = \frac{\kappa_r^2   \mu_1^2}{\delta \sigma_1^2}(e^{-\delta(t-s)}-e^{-\delta(T-s)}), \\
\psi_2(s,t) = \zeta(t) + \rho \int_s^t \int_{\mathbb{R}_{>0}}  e^{-\eta(u) z} (e^{\psi_1(u,t) z} - 1) (\phi(u) z + 1) F_2(dz) du.
\end{cases}
\end{align*}


\par Therefore,
\begin{align*}
\mathbb{E}_{s,x,y,\lambda}^P [ Y^*(t) H(t,\lambda(t)) ] =& y H(s,\lambda) \mathbb{E}_{s,x,y,\lambda}^P U_1(t) \\
=& y H(s,\lambda) \mathbb{E}_{s,x,y,\lambda}^P [e^{A_1(t)} e^{-A_1(t)} U_1(t)] \\
=& y H(s,\lambda)  L^{YH}(s,\lambda) \\
=& \frac{y}{2\theta} e^{\psi_1(s,t) \lambda + \psi_2(s,t)}.
\end{align*}

\par 3) $\mathbb{E}_{s,x,y,\lambda}^P [ Y^*(t)^2 H(T,\lambda(t))^2 ]$: We use the similar method as shown in 2). First by It{\^o}'s lemma,
\begin{align*}
d \bigg(Y^*(t) H(t,\lambda(t))\bigg)^2 
=&  \bigg(Y^*(t) H(t,\lambda(t))\bigg)^2 \biggl\{  -\frac{ ( \mu_0 -  r )^2 }{    \sigma_0 ^2} - \frac{\lambda(t) \kappa_r^2  \mu_1^2 }{ \sigma_1^2} - 2 \rho \iota_r \mu_2 \phi(t)  + \rho \sigma_2^2 \phi(t)^2  \biggr\}dt \\
&+ \bigg(Y^*(t) H(t,\lambda(t))\bigg)^2 \biggl\{ - \frac{  2 (\mu_0 -  r )  }{\sigma_0} dW_0(t) + \int_{\mathbb{R}_{>0}} (\frac{ 2 \kappa_r \mu_1 z}{\sigma_1^2} + \frac{ \kappa_r^2 \mu_1^2 z^2}{\sigma_1^4}) \widetilde{N}_1(dt,dz) \\
&+ \int_{\mathbb{R}_{>0}} (2 \phi(t) z + \phi(t)^2 z^2) \widetilde{N}_2(dt,dz) \biggr\}.
\end{align*}

\par Let
\begin{align*}
U_2(t) =&  \frac{Y^*(t)^2 H(t,\lambda(t))^2}{y^2 H(s,\lambda)^2}, \\
A_2(t) =& \int_s^t \biggl\{  -\frac{ ( \mu_0 -  r )^2 }{    \sigma_0 ^2} - \frac{\lambda(u) \kappa_r^2  \mu_1^2 }{ \sigma_1^2} - 2 \rho \iota_r \mu_2 \phi(u)  + \rho \phi(u)^2 \sigma_2^2 \biggr\}du, \\
M_2(t) =&  - \int_s^t \frac{  2 (\mu_0 -  r )  }{\sigma_0} dW_0(u) + \int_s^t \int_{\mathbb{R}_{>0}} (\frac{ 2 \kappa_r \mu_1 z}{\sigma_1^2} + \frac{ \kappa_r^2 \mu_1^2 z^2}{\sigma_1^4}) \widetilde{N}_1(du,dz) + \int_s^t \int_{\mathbb{R}_{>0}} (2 \phi(u) z + \phi(u)^2 z^2) \widetilde{N}_2(du,dz).
\end{align*}
Thus $U_2(t)$ satisfies
\begin{align*}
d U_2(t) = U_2(t) d A_2(t) + U_2(t) d M_2(t), \quad U_1(s) = 1,
\end{align*}
and
\begin{align*}
e^{-A_2(t)  } U_2(t)  =  1 + \int_s^t e^{-A_2(u) } U_2(u) d M_2(u).
\end{align*}
Define a new probability measure $P_2$ by
\begin{align*}
\frac{d P_2}{d P} \bigg|_{\mathcal{F}_t} = e^{-A_2(t) } U_2(t).
\end{align*}
The compensated Poisson process of $N_2$ under $P_2$ is given by
\begin{align*}
\widetilde{N}_2^{P_2}(dt,dz) = \widetilde{N}_2(dt,dz) - (2 \phi(t) z + \phi(t)^2 z^2) \rho F_2(dz) dt.
\end{align*}
Moreover,
\begin{align*}
\mathbb{E}_{s,x,y,\lambda}^{P_2} N_2(t) = \int_s^t \int_{\mathbb{R}_{>0}} (\phi(u)z +1)^2 \rho F_2(dz) du.
\end{align*}
Hence the representation of $\lambda$ under $P_2$ is as follows
\begin{align*}
d \lambda(t) = ( -\delta \lambda(t) + \rho \mu_2 + 2 \rho \sigma_2^2 \phi(t) + \rho \phi(t)^2 \int_{\mathbb{R}_{>0}} z^3 F_2(dz)) dt + \int_{\mathbb{R}_{>0}} z \widetilde{N}_2^{P_2}(dt,dz).
\end{align*}
By Feyman-Kac theorem,
\begin{align*}
L^{(YH)^2}(s,\lambda) :=  \mathbb{E}_{s,x,y,\lambda}^{P_2} e^{\int_s^t \{  -\frac{ ( \mu_0 -  r )^2 }{    \sigma_0 ^2} - \frac{\lambda(u) \kappa_r^2  \mu_1^2 }{ \sigma_1^2} - 2 \rho \iota_r \mu_2 \phi(u)  + \rho \sigma_2^2 \phi(u)^2 \}du}
\end{align*}
is the unique probability solution of the following PDE
\begin{align*}
\begin{cases}
L^{(YH)^2}_{s} - \delta \lambda L^{(YH)^2}_{\lambda} + \rho \int_{\mathbb{R}_{>0}} L^{(YH)^2}_{z} (\phi(s)z + 1)^2 F_2(dz) \\
+ \biggl\{ -\frac{ ( \mu_0 -  r )^2 }{    \sigma_0 ^2} - \frac{\lambda \kappa_r^2  \mu_1^2 }{ \sigma_1^2} - 2 \rho \iota_r \mu_2 \phi(s)  + \rho \sigma_2^2 \phi(s)^2 \biggr\}  L^{(YH)^2}(s,\lambda) = 0, \\
L^{(YH)^2}(t,\lambda) = 1,
\end{cases}
\end{align*}
in other words,
\begin{align*}
L^{(YH)^2}(s,\lambda) = e^{\psi_1(s,t) \lambda + \psi_3(s,t)} e^{-\eta(s) \lambda - \zeta(s)} = \frac{1}{2\theta} e^{\psi_1(s,t) \lambda + \psi_3(s,t)} H(s,\lambda)^{-1},
\end{align*}
where
\begin{align*}
\psi_3(s,t) =  \zeta(t) + \rho \int_s^t \int_{\mathbb{R}_{>0}}  e^{-\eta(u) z} (e^{\psi_1(u,t) z} - 1) (\phi(u) z + 1)^2 F_2(dz) du.
\end{align*}
\par Therefore
\begin{align*}
\mathbb{E}_{s,x,y,\lambda}^P [ Y^*(t)^2 H(t,\lambda(t))^2 ] =& y^2 H(s,\lambda)^2 \mathbb{E}_{s,x,y,\lambda}^P U_2(t) \\
=& y^2 H(s,\lambda)^2 \mathbb{E}_{s,x,y,\lambda}^P [e^{A_2(t)} e^{-A_2(t)} U_2(t)] \\
=& y^2 H(s,\lambda)^2  L^{(YH)^2}(s,\lambda) \\
=& \frac{y^2}{2\theta} e^{\psi_1(s,t) \lambda + \psi_3(s,t)} H(s,\lambda).
\end{align*}


\par 4) $\mathbb{E}_{s,x,y,\lambda}^P [ Y^*(t) H(t,\lambda(t)) \lambda(t) ]$: It can also be obtained by using Feyman-Kac theorem that
\begin{align}
L^{\lambda YH}(s,\lambda) :=  \mathbb{E}_{s,x,y,\lambda}^{P_1} [\lambda(t) e^{-  \int_s^t \{  \frac{ ( \mu_0 -  r )^2 }{    \sigma_0 ^2} + \frac{\lambda(u) \kappa_r^2  \mu_1^2 }{ \sigma_1^2} + \rho \iota_r \mu_2 \phi(u) \}du}]
\end{align}
is the unique probability solution of the following PDE
\begin{align}
\label{mmvSN.equation.L.lyh}
\begin{cases}
L^{\lambda YH}_{s} - \delta \lambda L^{\lambda YH}_{\lambda} + \rho \int_{\mathbb{R}_{>0}} L^{\lambda YH}_{z} (\phi(s)z + 1) F_2(dz) -  \biggl\{  \frac{ ( \mu_0 -  r )^2 }{    \sigma_0 ^2} + \frac{\lambda \kappa_r^2  \mu_1^2 }{ \sigma_1^2} + \rho \iota_r \mu_2 \phi(s)\biggr\}  L^{\lambda YH}(s,\lambda) = 0 \\
L^{\lambda YH}(t,\lambda) = \lambda.
\end{cases}
\end{align}
\par To solve the PDE, we suppose the form of $L^{\lambda YH}(s,\lambda) := L^{YH}(s,\lambda) \widetilde{L}(s,\lambda)$. Recall that $L^{YH}(s,\lambda)$ satisfies \eqref{mmvSN.equation.L.yh} and \eqref{mmvSN.equation.L.yh.solution}. By substituting $L^{YH}(s,\lambda) \widetilde{L}(s,\lambda)$ into \eqref{mmvSN.equation.L.lyh}, we see $\widetilde{L}(s,\lambda)$ satisfies
\begin{align*}
\begin{cases}
\widetilde{L}_{s} - \delta \lambda \widetilde{L}_{\lambda} + \rho \int_{\mathbb{R}_{>0}} \widetilde{L}_{z} e^{-\eta(s)z} (\phi(s)z + 1) F_2(dz)  = 0, \\
\widetilde{L}(t,\lambda) = \lambda,
\end{cases}
\end{align*}
which gives
\begin{align*}
\widetilde{L}(s,\lambda) = e^{-\delta(t-s) } \lambda +  \frac{\rho (\iota_r +1) \mu_2}{\delta} (1-e^{-\delta(t-s)} ).
\end{align*}
Hence,
\begin{align*}
L^{\lambda YH}(s,\lambda) =& \bigg( e^{-\delta(t-s) } \lambda +  \frac{\rho (\iota_r +1) \mu_2}{\delta} (1-e^{-\delta(t-s)} ) \bigg) e^{\psi_1(s,t) \lambda + \psi_2(s,t)} e^{-\eta(s) \lambda - \zeta(s)} \\
=& \frac{1}{2\theta}  \bigg( e^{-\delta(t-s) } \lambda +  \frac{\rho (\iota_r +1) \mu_2}{\delta} (1-e^{-\delta(t-s)} ) \bigg) e^{\psi_1(s,t) \lambda + \psi_2(s,t)} H(s,\lambda)^{-1}.
\end{align*}
\par Therefore
\begin{align*}
\mathbb{E}_{s,x,y,\lambda}^P [ Y^*(t) H(t,\lambda(t)) \lambda(t) ] =& y H(s,\lambda) \mathbb{E}_{s,x,y,\lambda}^P [ \lambda(t) U_1(t) ] \\
=& y H(s,\lambda) \mathbb{E}_{s,x,y,\lambda}^P [ \lambda(t) e^{A_1(t)} e^{-A_1(t)} U_1(t)] \\
=& y H(s,\lambda)  L^{\lambda YH}(s,\lambda) \\
=& \frac{y}{2\theta} e^{\psi_1(s,t) \lambda + \psi_2(s,t)} \bigg( e^{-\delta(t-s) } \lambda +  \frac{\rho (\iota_r +1) \mu_2}{\delta} (1-e^{-\delta(t-s)} ) \bigg).
\end{align*}


\par As a result, we obtian
\begin{align*}
&\mathbb{V}ar_{s,x,y,\lambda}^P[Y^*(t) H(t,\lambda(t))] = \frac{y^2}{4\theta^2}\bigg( 2 \theta e^{\psi_1(s,t) \lambda + \psi_3(s,t)} H(s,\lambda) -  e^{2\psi_1(s,t) \lambda + 2\psi_2(s,t)}\bigg), \\
&\mathbb{V}ar_{s,x,y,\lambda}^P[I(t,\lambda(t))] = \alpha(t)^2 \frac{\rho \sigma_2^2}{2 \delta} (1 - e^{-2\delta(t-s)}), \\
&Cov_{s,x,y,\lambda}^P [Y^*(t) H(t,\lambda(t)), I(t,\lambda(t))] = \alpha(t) \frac{y}{2\theta} e^{\psi_1(s,t) \lambda + \psi_2(s,t)} \frac{\rho \iota_r \mu_2}{\delta} (1-e^{-\delta(t-s)} ).
\end{align*}
Therefore
\begin{align}
\nonumber \mathbb{V}ar_{s,x,y,\lambda}^P[X^*(t) G(t)] 
\nonumber 
\nonumber =& \frac{y^2}{\theta^2}\bigg( 2 \theta e^{\psi_1(s,t) \lambda + \psi_3(s,t)} H(s,\lambda) -  e^{2\psi_1(s,t) \lambda + 2\psi_2(s,t)}\bigg) + \alpha(t)^2 \frac{\rho \sigma_2^2}{2 \delta} (1 - e^{-2\delta(t-s)}) \\
\label{mmvSN.equation.VarX} &+ 2  \alpha(t) \frac{y}{\theta} e^{\psi_1(s,t) \lambda + \psi_2(s,t)} \frac{\rho \iota_r \mu_2}{\delta} (1-e^{-\delta(t-s)} ). 
\end{align}


\par Moreover, by Theorem \ref{mmvSN.lemma.relationship_of_XY} and Lemma \ref{mmvSN.lemma.coefficients}, we have
\begin{align*}
&\mathbb{E}_{s,x,y,\lambda}^P [X^*(t) G(t)] - \mathbb{E}_{s,x,y,\lambda}^{Q^*} [X^*(t) G(t)] \\
=&   2 \mathbb{E}_{s,x,y,\lambda}^{Q^*} [ Y^*(t) H(t,\lambda(t)) ] - 2 \mathbb{E}_{s,x,y,\lambda}^P [ Y^*(t) H(t,\lambda(t)) ] +  \alpha(t) \mathbb{E}_{s,x,y,\lambda}^{Q^*} \lambda(t) - \alpha(t) \mathbb{E}_{s,x,y,\lambda}^P\lambda(t).
\end{align*}
\par Since $\mathbb{E}_{s,x,y,\lambda}^P [ Y^*(t) H(t,\lambda(t)) ]$ and $\mathbb{E}_{s,x,y,\lambda}^P\lambda(t)$ are derived in the first part of the proof, we only need to compute the values of $\mathbb{E}_{s,x,y,\lambda}^{Q^*} [ Y^*(t) H(t,\lambda(t)) ]$ and $\mathbb{E}_{s,x,y,\lambda}^{Q^*} \lambda(t)$.

\par Denote by
\begin{align*}
&dW_0^{Q^*}(t) := dW_0(t) +  \frac{\mu_0 - r}{\sigma_0} dt, \\ 
&\widetilde{N}_1^{Q^*}(dt,dz) := \widetilde{N}_1(dt,dz) - \frac{\lambda(t) \mu_1 \kappa_r z}{\sigma_1^2} F_1(dz)dt, \\
&\widetilde{N}_2^{Q^*}(dt,dz) := \widetilde{N}_2(dt,dz) - \rho \bigg(e^{-\eta(t)z}+\phi(t)e^{-\eta(t)z}z-1\bigg) F_2(dz)dt.
\end{align*}
\par It follows from Theorem 10 and Theorem 11 in \cite{kabanov1979absolute} that $W_0^{Q^*}$ is a standard Brownian motion, $\widetilde{N}_1^{Q^*}$ and $\widetilde{N}_2^{Q^*}$ are the compensated compound Poisson processes with respect to $N_1$ and $N_2$ under probability measure $Q^*$. Thus,
\begin{align*}
d Y^*(t) H(t,\lambda(t)) =& Y^*(t) H(t,\lambda(t)) \biggl\{ - \frac{  \mu_0 -  r   }{\sigma_0} dW_0^{Q^*}(t) + \int_{\mathbb{R}_{>0}} \frac{ \kappa_r \mu_1 z}{\sigma_1^2} \widetilde{N}_1^{Q^*}(dt,dz) + \int_{\mathbb{R}_{>0}} \phi(t) z \widetilde{N}_2^{Q^*}(dt,dz) \biggr\},
\end{align*}
and
\begin{align*}
d \lambda(t) = ( -\delta \lambda(t) + \rho (\iota_r + 1) \mu_2 ) dt + \int_{\mathbb{R}_{>0}} z \widetilde{N}_2^{Q^*}(dt,dz).
\end{align*}
Therefore, $Y^*(t) H(t,\lambda(t))$ is a martingale under $Q^*$ such that
\begin{align*}
\mathbb{E}_{s,x,y,\lambda}^{Q^*} [ Y^*(t) H(t,\lambda(t)) ] =  y H(s,\lambda).
\end{align*}
By the same method in 1), we also have
\begin{align*}
\mathbb{E}_{s,x,y,\lambda}^{Q^*} \lambda(t) = e^{-\delta(t-s)}\lambda + \frac{\rho (\iota_r + 1 ) \mu_2}{\delta} (1 - e^{-\delta(t-s)}).
\end{align*}
\par As a result,
\begin{align}
&\mathbb{E}_{s,x,y,\lambda}^P [X^*(t) G(t)] - \mathbb{E}_{s,x,y,\lambda}^{Q^*} [X^*(t) G(t)] 
\label{mmvSN.equation.EPX-EQX} =&   \frac{y}{\theta} \bigg(e^{\eta(s) \lambda + \zeta(s)} - e^{\psi_1(s,t) \lambda + \psi_2(s,t)}\bigg) +  \alpha(t) \frac{\rho \iota_r \mu_2}{\delta} (1 - e^{-\delta(t-s)}).
\end{align}

\par To complete the proof, we only need to substitute \eqref{mmvSN.equation.EPX-EQX} into \eqref{mmvSN.equation.VarX} and eliminate $\theta$.


\end{proof}

%
%
%


\section{Proofs of Statements in Section \ref{mmvSN.section.approximation}}

\subsection{Proof of Proposition \ref{mmvSN.diffusion.lemma.solution.suppose}}
\begin{proof}
	\par We suppose that the form of solution to \eqref{mmvSN.diffusion.hjbi.equation} is as follows
	\begin{align}
	\label{mmvSN.diffusion.function.value.suppose}
	W(t,x,y,\lambda) =  G(t)xy + H(t,\lambda)y^2 +I(t,\lambda)y.
	\end{align}

	\par Substituting \eqref{mmvSN.diffusion.function.value.suppose} into \eqref{mmvSN.diffusion.generator} gives
	\begin{align}
	\nonumber \mathcal{L}_t^{a,b} W(t,x,y,\lambda) =& G_t xy + H_t y^2 +I_t y +\biggl\{ r x  + (- \kappa_r + \kappa)\mu_1\lambda  + (- \iota_r + \iota) k \mu_2\rho \biggr\} G(t) y \\
	\nonumber &+ (-\delta \lambda + \rho \mu_2) ( H_{\lambda} y^2 +I_{\lambda} y ) + \frac{1}{2} \rho \sigma_2^2 ( H_{\lambda \lambda} y^2 +I_{\lambda \lambda} y )  \\
	\nonumber &+ \pi (\mu_0 - r)  G(t) y + \pi \sigma_0 y o G(t) + y^2 o^2 H(t,\lambda)\\
	\nonumber &+ \kappa_r u \mu_1\lambda G(t) y - u \sigma_1\sqrt{\frac{ \rho \mu_2}{\delta}} y p G(t) + y^2 p^2 H(t,\lambda) \\
	\label{mmvSN.diffusion.generator.suppose} &+ \iota_r k v \mu_2\rho G(t) y + \sigma_2\sqrt{ \rho}  y q (  2 H_{\lambda} y +I_{\lambda}  - G(t) k v )  + y^2 q^2 H(t,\lambda).
	\end{align}
	
	\par If $H(t,\lambda) > 0$ holds, we differentiate \eqref{mmvSN.diffusion.generator.suppose} with respect to $o$, $p$ and $q$ and let them equal $0$, respectively, then the minimum of \eqref{mmvSN.diffusion.generator.suppose} is attained at
	\begin{align*}
	o^*  =& - \frac{ G(t) \pi \sigma_0 }{2 H(t,\lambda) y}, \\
	p^*  =& \frac{ G(t)  u \sigma_1\sqrt{\frac{ \rho \mu_2}{\delta}}  }{2 H(t,\lambda)  y}, \\
	q^*  =& - \frac{\sigma_2\sqrt{ \rho}( 2 H_{\lambda} y +  I_{\lambda} - G(t) k v  ) }{ 2 H(t,\lambda) y}.
	\end{align*}	
	\par Plugging $o^*$, $p^*$ and $q^*$ back into \eqref{mmvSN.diffusion.generator.suppose} gives
	\begin{align}
	\nonumber \mathcal{L}_t^{a,b} W(t,x,y,\lambda) =& G_t xy + H_t y^2 +I_t y +\biggl\{ r x  + (- \kappa_r + \kappa)\mu_1\lambda  + ( - \iota_r + \iota)\mu_2\rho \biggr\} G(t) y \\
	\nonumber &+ (-\delta \lambda + \rho \mu_2) ( H_{\lambda} y^2 +I_{\lambda} y ) + \frac{1}{2} \rho \sigma_2^2 ( H_{\lambda \lambda} y^2 +I_{\lambda \lambda} y )  \\
	\nonumber &+ \pi (\mu_0 - r)  G(t) y - \frac{ G(t)^2 \pi^2 \sigma_0^2 }{4 H(t,\lambda)}\\
	\nonumber &+ \kappa_r u \mu_1\lambda G(t) y - \frac{ G(t)^2  u^2 \sigma_1^2 \frac{ \rho \mu_2}{\delta}  }{4 H(t,\lambda) } \\
	\label{mmvSN.diffusion.generator.suppose.2} &+ \iota_r k v \mu_2\rho G(t) y - \frac{\sigma_2^2  \rho ( 2 H_{\lambda} y +  I_{\lambda} - G(t) k v  )^2 }{ 4 H(t,\lambda) }.
	\end{align}
	
	\par If both $G(t)>0$ and $H(t,\lambda)>0$, by differentiating \eqref{mmvSN.diffusion.generator.suppose.2} with respect to $\pi$, $u$ and $v$ and letting it equal $0$, we obtain
	\begin{align*}
	\pi^* =& \frac{2 H(t,\lambda)(\mu_0 - r)  y}{   G(t) \sigma_0^2  }, \\
	u^* =& \frac{2 H(t,\lambda) \kappa_r \mu_1 \lambda  y}{ G(t)  \sigma_1^2\frac{ \rho \mu_2}{\delta}  }, \\
	v^* =&  \frac{1}{k}\bigg(\frac{2 H(t,\lambda) \iota_r \mu_2    y }{   G(t) \sigma_2^2 } + \frac{2 H_{\lambda} y }{   G(t) } + \frac{I_{\lambda}}{G(t)}\bigg),
	\end{align*}
	and the maximum of \eqref{mmvSN.diffusion.generator.suppose.2} is attained at $(\pi^*,u^*,v^*)$.
	
	\par Let us substitute $\pi^*$, $u^*$ and $v^*$ back into \eqref{mmvSN.diffusion.generator.suppose.2},
	\begin{align*}
	\mathcal{L}_t^{a,b} W(t,x,y,\lambda) =& G_t xy + H_t y^2 +I_t y +\biggl\{ r x  + (- \kappa_r + \kappa)\mu_1\lambda  + ( - \iota_r + \iota)k \mu_2\rho \biggr\} G(t) y \\
	&+ (-\delta \lambda + \rho \mu_2) ( H_{\lambda} y^2 +I_{\lambda} y ) + \frac{1}{2} \rho \sigma_2^2 ( H_{\lambda \lambda} y^2 +I_{\lambda \lambda} y )  \\
	&+ \frac{ H(t,\lambda)(\mu_0 - r)^2  y^2}{    \sigma_0^2  } + \frac{H(t,\lambda) \kappa_r^2 \mu_1^2 \lambda^2  y^2}{   \sigma_1^2\frac{ \rho \mu_2}{\delta}  } + \frac{H(t,\lambda) \rho \iota_r^2 \mu_2^2 y^2 }{   \sigma_2^2 } \\
	&+ 2 H_{\lambda} \iota_r \mu_2\rho  y^2 + I_{\lambda}\iota_r \mu_2\rho  y,
	\end{align*}
	then \eqref{mmvSN.diffusion.equation.function.value.coefficients} is obtained by separation of variables.
\end{proof}

\subsection{Proof of Lemma \ref{mmvSN.diffusion.lemma.coefficients}}

\begin{proof}
	It is easy to obtain that 
	\begin{align}
	\label{mmvSN.diffusion.equation.function.value.coefficients.suppose.G}
	G(t) = e^{r(T-t)}.
	\end{align}
	
	
	\par For $H(t,\lambda)$, we suppose that it has the following form
	\begin{align}
	\label{mmvSN.diffusion.equation.function.value.coefficients.suppose.H}
	H(t,\lambda) = \frac{1}{2 \theta}e^{\xi(t) \lambda^2 + \eta(t) \lambda + \zeta(t)}.
	\end{align}
	
	\par Substituting \eqref{mmvSN.diffusion.equation.function.value.coefficients.suppose.H} into \eqref{mmvSN.diffusion.equation.function.value.coefficients} gives
	%
	\begin{align*}
	\begin{cases}
	\xi'(t) + 2 \rho \sigma_2^2  \xi(t)^2 - 2 \delta \xi(t)  + \frac{\kappa_r^2  \mu_1^2  \delta}{\rho \mu_2 \sigma_1^2 } = 0, \\
	\eta'(t) -(\delta - 2 \rho \sigma_2^2 \xi(t)) \eta(t) + 2 \rho \mu_2 ( 2 \iota_r + 1) \xi(t) = 0, \\
	\zeta'(t) + \rho \mu_2 ( 2 \iota_r + 1) \eta(t) + \frac{1}{2} \rho \sigma_2^2 \eta(t)^2 + \rho \sigma_2^2  \xi(t) + \frac{ (\mu_0 - r)^2 }{    \sigma_0^2  }  + \frac{ \rho \iota_r^2 \mu_2^2 }{   \sigma_2^2 }= 0.
	\end{cases}
	\end{align*}
	
	\par For $\xi(t)$, we have
	\begin{align*}
	(T-t) =& - \int_t^T \frac{d\xi(s)}{  2 \rho \sigma_2^2\xi(s)^2 - 2 \delta \xi(s) + \frac{\kappa_r^2  \mu_1^2  \delta}{\rho \mu_2 \sigma_1^2}}. \\
	\end{align*}
	
	\par The characteristic equation is given by
	\begin{align*}
	2 \rho \sigma_2^2 d^2 - 2 \delta d + \frac{\kappa_r^2  \mu_1^2  \delta}{\rho \mu_2 \sigma_1^2} = 0,
	\end{align*}
	which has two solutions, namely,
	\begin{align*}
	d_{1,2} = \frac{2 \delta \pm \sqrt{\Delta}}{4 \rho \sigma_2^2},
	\end{align*}
	where
	\begin{align*}
	\Delta :=& 4 \delta^2 -   \frac{8 \kappa_r^2  \mu_1^2 \sigma_2^2 \delta}{\mu_2 \sigma_1^2}.
	\end{align*}
	
	If $\Delta = 0$, then $d_1 = d_2 = \frac{\delta}{2 \rho \sigma_2^2}$, and
	\begin{align*}
	(T-t) =& - \int_t^T \frac{d\xi(s)}{  2 \rho \sigma_2^2\xi(s)^2 - 2 \delta \xi(s) + \frac{\kappa_r^2  \mu_1^2  \delta}{\rho \mu_2 \sigma_1^2}} \\
	=& -\frac{1}{2 \rho \sigma_2^2}\int_t^T \frac{1}{(\xi(s)-d_1)^2} d\xi(s) \\
	=& -\frac{1}{2 \rho \sigma_2^2} \bigg(\frac{1}{d_1} + \frac{1}{\xi(t)-d_1}\bigg),
	\end{align*}
	thus
	\begin{align*}
	\xi(t) 
	=& \frac{\kappa_r^2 \mu_1^2}{\sigma_1^2} \frac{T-t}{\rho \mu_2(T-t) + \frac{\rho \mu_2}{\delta}}.
	\end{align*}

	If $\Delta > 0$,
	\begin{align*}
	T-t =& - \int_t^T \frac{d\xi(s)}{  2 \rho \sigma_2^2\xi(s)^2 - 2 \delta \xi(s) + \frac{\kappa_r^2  \mu_1^2  \delta}{\rho \mu_2 \sigma_1^2}} \\
	=& -\frac{1}{2 \rho \sigma_2^2(d_1-d_2)}\int_t^T \frac{1}{\xi(s)-d_1} - \frac{1}{\xi(s)-d_2}d\xi(s) \\
	=& -\frac{1}{\sqrt{\Delta}} \ln\left |\frac{\xi(s)-d_1}{\xi(s)-d_2}\right ||_t^T.
	\end{align*}
	
	\par Since $\xi(t)$ is a continuous function with boundary condition $\xi(T) = 0 < d_2 < d_1$, we assert that $\xi(t) = 0 < d_2 < d_1$ for all $t \in [0,T]$. Factually, if $\{t:\xi(t) \geq d_2\} \cap [0,T] \neq \varnothing$, then, by its continuity, there exists a $t_0 \in [0,T]$ such that $\xi(t_0) = d_2$. In this case,
	\begin{align*}
	\sqrt{\Delta}(T-t_0) = \ln \frac{d_1-\xi(t_0)}{d_2-\xi(t_0)} - \ln \frac{d_1}{d_2} = -\infty,
	\end{align*}
	which is a contradiction. Hence
	\begin{align*}
	\sqrt{\Delta}(T-t) =&  \ln \frac{d_1-\xi(t)}{d_2-\xi(t)} - \ln \frac{d_1}{d_2},
	\end{align*}
	which gives
	\begin{align*}
	\xi(t) = \frac{d_1d_2(e^{\sqrt{\Delta}(T-t)}-1)}{d_1e^{\sqrt{\Delta}(T-t)}-d_2}.
	\end{align*}
	\par Moreover since $d_1 > d_2$, we have $\xi(t) >0$ for all $t \in [0,T]$.
	
	\par For $\eta(t)$, by integration, we have
	\begin{align*}
	\eta(t) =& \int_t^T e^{-\int_t^s \left (\delta - 2 \rho \sigma_2^2 \xi(\tau)\right ) d\tau} 2 \rho \mu_2 ( 2 \iota_r + 1) \xi(s) ds.
	\end{align*}
	\par If $\Delta = 0$, we have
	\begin{align*}
	\delta =  \frac{2 \kappa_r^2  \mu_1^2 \sigma_2^2}{\mu_2 \sigma_1^2}.
	\end{align*}
	Hence
	\begin{align*}
	2 \rho \sigma_2^2 \int_t^s \xi(\tau) d\tau =&  \frac{2 \rho \sigma_2^2 \kappa_r^2 \mu_1^2}{\sigma_1^2} \int_t^s \frac{T-\tau}{\rho \mu_2(T-\tau) + \frac{\rho \mu_2}{\delta}} d\tau \\
	=& \frac{2 \sigma_2^2 \kappa_r^2 \mu_1^2}{\sigma_1^2 \mu_2} \left(s-t + \frac{1}{\delta} \ln\left(\frac{\delta(T-s)+1}{\delta(T-t)+1}\right)\right) \\
	=& \delta (s-t) + \ln\left(\frac{\delta(T-s)+1}{\delta(T-t)+1}\right),
	\end{align*}
	and
	\begin{align*}
	e^{-\int_t^s \left (\delta - 2 \rho \sigma_2^2 \xi(\tau)\right ) d\tau} 
	=& \frac{\delta(T-s)+1}{\delta(T-t)+1}.
	\end{align*}
	Therefore,
	\begin{align*}
	\eta(t) =& \int_t^T e^{-\int_t^s \left (\delta - 2 \rho \sigma_2^2 \xi(\tau)\right ) d\tau} 2 \rho \mu_2 ( 2 \iota_r + 1) \xi(s) ds \\
	=& 2 \rho \mu_2 ( 2 \iota_r + 1) \int_t^T \frac{\delta(T-s)+1}{\delta(T-t)+1} \frac{\kappa_r^2 \mu_1^2}{\sigma_1^2} \frac{T-s}{\rho \mu_2(T-s) + \frac{\rho \mu_2}{\delta}} ds \\
	=& \frac{2  ( 2 \iota_r + 1)\kappa_r^2 \mu_1^2}{\sigma_1^2}  \int_t^T \frac{\delta(T-s)+1}{\delta(T-t)+1}  \frac{\delta(T-s)}{\delta(T-s) + 1} ds \\
	=& \frac{ ( 2 \iota_r + 1)\delta \kappa_r^2 \mu_1^2}{\sigma_1^2} \frac{(T-t)^2 }{\delta(T-t)+1}.
	\end{align*}
	If $\Delta > 0$, then
	\begin{align*}
	2 \rho \sigma_2^2 \int_t^s \xi(\tau) d\tau =& 2 \rho \sigma_2^2 \int_t^s \left (d_1 + (d_2-d_1)\frac{e^{\sqrt{\Delta}(T-r)}}{e^{\sqrt{\Delta}(T-\tau)}-\frac{d_2}{d_1}} \right ) d\tau \\
	=& 2 \rho \sigma_2^2 \int_t^s d_1  d\tau - 2 \rho \sigma_2^2 \int_t^s \left ( (d_1-d_2)\frac{e^{\sqrt{\Delta}(T-r)}}{e^{\sqrt{\Delta}(T-\tau)}-\frac{d_2}{d_1}} \right ) d\tau \\
	=& 2 \rho \sigma_2^2 d_1(s-t) +  \ln{\left (\frac{d_1e^{\sqrt{\Delta}(T-s)}-d_2}{d_1e^{\sqrt{\Delta}(T-t)}-d_2}\right )},
	\end{align*}
	and
	\begin{align*}
	e^{-\int_t^s \left (\delta - 2 \rho \sigma_2^2 \xi(\tau)\right ) d\tau} 
	=& e^{\frac{\sqrt{\Delta}}{2}(t-s)} \left (\frac{d_1e^{\sqrt{\Delta}(T-s)}-d_2}{d_1e^{\sqrt{\Delta}(T-t)}-d_2}\right ).
	\end{align*}
	Thus $\eta(t)$ can be simplified as
	\begin{align*}
	\eta(t) =& \int_t^T e^{-\int_t^s \left (\delta - 2 \rho \sigma_2^2 \xi(\tau)\right ) d\tau} 2 \rho \mu_2 ( 2 \iota_r + 1) \xi(s) ds \\
	=& 2 \rho \mu_2 ( 2 \iota_r + 1) \int_t^T e^{\frac{\sqrt{\Delta}}{2}(t-s)}  \left (\frac{d_1e^{\sqrt{\Delta}(T-s)}-d_2}{d_1e^{\sqrt{\Delta}(T-t)}-d_2}\right ) \left (\frac{d_1d_2(e^{\sqrt{\Delta}(T-s)}-1)}{d_1e^{\sqrt{\Delta}(T-s)}-d_2} \right ) ds \\
	=& \frac{4 \rho \mu_2 ( 2 \iota_r + 1) d_1d_2}{3 \sqrt{\Delta}}\frac{  \left ( e^{\frac{\sqrt{\Delta}}{2}(T-t)} -1\right )^2 \left ( 1 +2 e^{-\frac{\sqrt{\Delta}}{2}(T-t)} \right )}{d_1e^{\sqrt{\Delta}(T-t)}-d_2}.
	\end{align*}
	
	\par By integrating with respect $t$, $\zeta(t)$ can be obtained as follows
	\begin{align*}
	\zeta(t) = \rho \int_t^T \bigg(\rho \mu_2 ( 2 \iota_r + 1) \eta(s) + \frac{1}{2} \rho \sigma_2^2 \eta(s)^2 + \rho \sigma_2^2  \xi(s) \bigg) ds + \frac{ \rho (\mu_0 - r)^2 }{    \sigma_0^2  }(T-t)  + \frac{ \rho^2 \iota_r^2 \mu_2^2 }{   \sigma_2^2 } (T-t).
	\end{align*}

	\par To sum up, we have
	\begin{align*}
	\begin{cases}
	\xi(t) =& \frac{\kappa_r^2 \mu_1^2}{\sigma_1^2} \frac{T-t}{\rho \mu_2(T-t) + \frac{\rho \mu_2}{\delta}} 1_{\{\Delta = 0\}} + \frac{d_1d_2(e^{\sqrt{\Delta}(T-t)}-1)}{d_1e^{\sqrt{\Delta}(T-t)}-d_2} 1_{\{\Delta >0\}}, \\
	\eta(t) =& \frac{ ( 2 \iota_r + 1)\delta \kappa_r^2 \mu_1^2}{\sigma_1^2} \frac{(T-t)^2 }{\delta(T-t)+1} 1_{\{\Delta = 0\}} + \frac{4 \rho \mu_2 ( 2 \iota_r + 1) d_1d_2}{3 \sqrt{\Delta}}\frac{  \left ( e^{\frac{\sqrt{\Delta}}{2}(T-t)} -1\right )^2 \left ( 1 +2 e^{-\frac{\sqrt{\Delta}}{2}(T-t)} \right )}{d_1e^{\sqrt{\Delta}(T-t)}-d_2} 1_{\{\Delta >0\}}, \\
	\zeta(t) =& \rho \int_t^T \bigg(\rho \mu_2 ( 2 \iota_r + 1) \eta(s) + \frac{1}{2} \rho \sigma_2^2 \eta(s)^2 + \rho \sigma_2^2  \xi(s) \bigg) ds + \frac{ \rho (\mu_0 - r)^2 }{    \sigma_0^2  }(T-t)  + \frac{ \rho^2 \iota_r^2 \mu_2^2 }{   \sigma_2^2 } (T-t),
	\end{cases}
	\end{align*}
	where
	\begin{align*}
	\Delta = 4 \delta^2 -   \frac{8 \kappa_r^2  \mu_1^2 \sigma_2^2 \delta}{\mu_2 \sigma_1^2}, \quad d_{1,2} = \frac{2 \delta \pm \sqrt{\Delta}}{4 \rho \sigma_2^2}.
	\end{align*}
	
	\par We make the ansatz that $I(t,\lambda)$ has the following form
	\begin{align}
	\label{mmvSN.diffusion.equation.function.value.coefficients.suppose.I}
	I(t,\lambda) = \alpha(t) \lambda +\beta(t).
	\end{align}
	
	\par Similarly, substituting \eqref{mmvSN.diffusion.equation.function.value.coefficients.suppose.G} and \eqref{mmvSN.diffusion.equation.function.value.coefficients.suppose.I} into \eqref{mmvSN.diffusion.equation.function.value.coefficients} gives
	%
	\begin{align*}
	\begin{cases}
	\alpha'(t) - \delta \alpha(t) - (\kappa_r - \kappa)\mu_1 e^{r(T-t)}  = 0, \\
	\beta'(t) - (\iota_r - \iota) k \rho \mu_2 e^{r(T-t)} + ( 1 + \iota_r ) \rho \mu_2 \alpha(t)  = 0,
	\end{cases}
	\end{align*}
	which admit the solutions
	\begin{align*}
	\begin{cases}
	\alpha(t) = - (\kappa_r - \kappa)\mu_1 \int_t^T e^{-\delta(s-t)} e^{r(T-s)} ds , \\
	\beta(t) = \rho \int_t^T  \biggl\{\alpha(s)(1 + \iota_r) \mu_2 - ( \iota_r - \iota)k \mu_2 e^{r(T-s)}\biggr\} ds,
	\end{cases}
	\end{align*}
	i.e.,
	\begin{align*}
	\begin{cases}
	\alpha(t) = \frac{(\kappa_r - \kappa)\mu_1}{\delta + r} (e^{- \delta(T-t)} - e^{r(T-t)})  , \\
	\beta(t) = \frac{\rho (\kappa_r - \kappa) (\iota_r+1) \mu_1 \mu_2}{\delta + r} \bigg( \frac{1}{\delta}(1-e^{-\delta(T-t)})+\frac{1}{r}(1-e^{r(T-t)}) \bigg)  - \frac{\rho ( \iota_r - \iota)k\mu_2}{r} (1-e^{r(T-t)}).
	\end{cases}
	\end{align*}

\end{proof}


\end{appendices}

\end{document}